  \colorlet{RED}{red}
\numberwithin{equation}{section}
\newtheorem{example}{Example}[section]
\newtheorem{definition}[example]{Definition}
\newtheorem{theorem}[example]{Theorem}
 \newtheorem{proposition}[example]{Proposition}
\newtheorem{lemma}[example]{Lemma}
 \newtheorem{corollary}[example]{Corollary}
\newtheorem{remark}[example]{Remark}
\newtheorem*{maintheorem*}{Main Theorem}
\numberwithin{equation}{section}
\renewcommand{\i}{\ifmmode\mathit{\mathchar"7010 }\else\char"10 \fi}
\renewcommand{\j}{\ifmmode\mathit{\mathchar"7011 }\else\char"11 \fi}
\newcommand{\R}{\mathbb{R}}
\newcommand{\N}{\mathbb{N}}
\newcommand{\norm}[1]{\left\|#1\right\|}
\newcommand{\eps}{\varepsilon}
\def\begi{\begin{itemize}}
\def\endi{\end{itemize}}
\def\bega{\begin{array}}
\def\enda{\end{array}}
\def\u{\mathbf{u}}
\def\x{\mathbf{x}}
\def\f{\mathbf{f}}
\def\y{\mathbf{y}}
\newenvironment{Assumptions}
{%

\begin{enumerate}}%
{\end{enumerate}}
\begin{document}

\title{Dispersive effects in a scalar nonlocal wave equation inspired by peridynamics}

\author[G. M. Coclite]{Giuseppe Maria Coclite}
\address[Giuseppe Maria Coclite]{\newline
   Dipartimento di Meccanica, Matematica e Management, Politecnico di Bari,
  Via E.~Orabona 4,I--70125 Bari, Italy.}
\email[]{giuseppemaria.coclite@poliba.it}

\author[S. Dipierro]{Serena Dipierro}
\address[Serena Dipierro]{\newline Department of Mathematics and Statistics,
University of Western Australia, 35 Stirling Highway, WA6009 Crawley, Australia}
\email[]{serena.dipierro@uwa.edu.au}

\author[G. Fanizza]{Giuseppe Fanizza}
\address[Giuseppe Fanizza]{\newline
  Instituto de Astrofis\'\i ca e Ci\^encias do Espa\c co, Faculdade de Ci\^encias, Universidade de Lisboa,
Edificio C8, Campo Grande, P-1740-016, Lisbon, Portugal.}
\email[]{gfanizza@fc.ul.pt}

\author[F. Maddalena]{Francesco Maddalena}
\address[Francesco Maddalena]{\newline
  Dipartimento di Meccanica, Matematica e Management, Politecnico di Bari,
  Via E.~Orabona 4,I--70125 Bari, Italy.}
\email[]{francesco.maddalena@poliba.it}

\author[E. Valdinoci]{Enrico Valdinoci}
\address[Enrico Valdinoci]{\newline Department of Mathematics and Statistics,
University of Western Australia, 35 Stirling Highway, WA6009 Crawley, Australia.}
\email[]{enrico.valdinoci@uwa.edu.au}

\date{\today}

\subjclass[2020]{74A70, 74B10, 70G70, 35Q70.}

\keywords{Peridynamics, nonlocal continuum mechanics, elasticity}

\thanks{GMC and FM are members of the Gruppo Nazionale per l'Analisi Matematica, la Probabilit\`a e le loro Applicazioni (GNAMPA) of the Istituto Nazionale di Alta Matematica (INdAM). SD and EV are members
of the AustMS.
GMC  and FM have been partially supported by the  Research Project of National Relevance ``Multiscale Innovative Materials and Structures'' granted by the Italian Ministry of Education, University and Research (MIUR Prin 2017, project code 2017J4EAYB and the Italian Ministry of Education, University and Research under the Programme Department of Excellence Legge 232/2016 (Grant No. CUP - D94I18000260001). 
SD has been supported by the Australian Research Council DECRA DE180100957
``PDEs, free boundaries and applications''.
GF acknowledges support by FCT under the program {\it Stimulus} with the grant no. CEECIND/04399/2017/CP1387/CT0026. EV has been supported by
the Australian Laureate Fellowship FL190100081 ``Minimal
surfaces, free boundaries and partial differential equations''.}

\begin{abstract} 
We study the dispersive properties of a linear equation in one spatial dimension
{which is inspired by models in peridynamics}. The interplay between nonlocality and dispersion is analyzed in detail through the study of the asymptotics at low and high frequencies,   revealing  new features ruling the wave propagation 
in continua where  nonlocal characteristics must be taken into account.
Global dispersive estimates and existence of conserved functionals are proved. A comparison between these new effects and the classical local {\it scenario} is deepened also through a numerical analysis. 
\end{abstract}

\maketitle

\section*{Introduction}
A fundamental trait in the
mathematical modeling of continuum physics relies in capturing the essential phenomena of a complex
problem while
still keeping 
the technical difficulties as manageable as possible. A typical example of this strategy
is provided by classical linear elasticity (\cite{G}) 
which, in spite of its very long-lasting tradition, represents yet an unavoidable comparison term even
for the more recent mechanical theories aimed to describe old and new  material behaviors that were
not contemplated in the original theory. In particular, the spontaneous creation of singularities like cracks or damages and their evolution process,  as well as the dispersive characteristics affecting wave propagation, have originated 
a great effort in exploiting new and subtle mathematical formulations, leading to a general consensus on the fact that macroscopic manifestations as plasticity or failure are governed by 
intricate mechanisms acting at different scales.
With respect to these considerations, the analysis of the nonlocal
features of a model has become a consolidated strategy (\cite{CDMV}) towards a  better rational understanding of ``how nature works''.

While the insertion of nonlocal descriptors in classical differential formulations of continuum mechanics (as strain gradient,  convolution kernels, etc.) has a long tradition (\cite{StrG, StrG1, E1,
E2, Ku, Kr}), the acceptance  {\it ab initio} of a  material intrinsic length-scale in a genuinely nonlocal theory is a more recent achievement and {\it peridynamics}, as initiated\footnote{As a terminology remark,
we point out that, in several peridynamic models, the kernel is often scaled so that as the interaction
range~$\delta$ goes to zero, to obtain a fixed elastic constant. The analysis developed in this paper
does not adopt such a limit scaling, but we maintain the name of ``perydinamic'' for our model, coherently
with~\cite{CCMP, CDFMRV, Coclite_2018,CFLMP}.}
by S.A. Silling (see~\cite{Sill, Sill1, Sill2, Sill3, Sill4}), seems to have good chances
to shed some new  light on these problems (for more recent results see  \cite{Baant2016, Du2020, MR3803286, MR3554548, Du2018, Weckner2008}). 

In the present paper we deal with possibly the simplest evolution equation { motivated by} linear peridynamics,
in one spatial dimension, with the aim of investigating the dispersive features of wave propagation
and detect a number of original features due to nonlocality.

In particular, it is known that material dispersion manifests through propagating pulses with frequency components
traveling at a  different speed \cite{Br, Wh}: as the distance increases, the pulse becomes broader, hence the mathematical analysis 
mainly concerns  the study of the properties of the dispersion relation and   the  determination of decay properties 
of various functional norms of the solutions of the initial value problem (\cite{T}).
In this paper we pursue this program (namely, understanding the dispersion relation of
propagating pulses and establishing regularity and decay estimates)
through a detailed study on how nonlocality influences the
dispersive behavior. The results that we obtain
suggest non trivial and maybe not expected dispersive properties, which represent a first step towards the understanding of the dispersive nature 
of the nonlinear problem studied in~\cite{Coclite_2018}. The methodologies developed here
may be also instrumental for the study of nonlinear and nonlocal dispersive equations of general type.

The article is organized as follows: 
In Section~\ref{linmod} we introduce the initial value problem given by \eqref{eq:linear_per} following the general framework studied in \cite{CCMP, Coclite_2018} and deduce the corresponding dispersion relation
(see also \cite{CDFMRV}).
In Section~\ref{Disprel} the dispersion relation is studied in detail and the asymptotics at low and high frequencies clearly exhibit the scale effects ruled by nonlocality 
(see Theorems~\ref{ASYLOANDHI} and~\ref{OMEPI-P-LOGA}). In particular one sees that at low frequencies,
hence at large physical scales, the propagation is quite  similar to that governed by the classical wave equation, while at high frequencies, hence at small physical scales, the propagation is remarkably different, due to nonlocality. Furthermore, the analysis of the derivatives of the dispersion reveals new and somehow unexpected features.  Indeed (see Theorem~\ref{OMEPI-P}), due to nonlocality these derivatives present exotic decay with highly oscillatory behavior at high frequencies. Since these quantities are related with the velocity of energy transport, this suggests that some pieces of information
could be hidden at small scales because of the strange  behavior of their propagation velocity.
In Section~\ref{decay} we prove some decay properties of the solution of our problem by establishing properly dispersive estimates.
Those results play a key role in the subsequent Section~\ref{CON:SEC} where we prove the conservation of energy, momentum and  angular momentum (Theorem~\ref{CONSE}).
Section~\ref{numer} is devoted to a numerical study on the comparison between classical wave equation and   the present nonlocal problem and the outcome of this analysis  essentially shows
that  the {nonlocal} case seems to produce additional oscillations.

{Section~\ref{BRIEF} contains
an approximation result for nonlinear equations, stating explicit conditions under which the solution of our linear equation
approximates well, for finite times, the one of a nonlinear equation
(interestingly, the detailed estimate that we provide
relies on the previous bounds on the dispersion relation).

Then, in Section~\ref{0876540987654PK-odk3k05ug} we briefly indicate how the
different mathematical properties detected in this paper can be connected to real world situations
(also, it
highlights that the linear model that we present in this paper
is already sufficient to provide nonlinear oscillations of the frequency function, thanks to our asymptotics on the dispersion relation).}

The paper ends with three appendices in which some facts previously mentioned in the paper are specifically proved.

\section{The linear model}
\label{linmod}

In \cite{Coclite_2018} the Cauchy problem related to a very general model of nonlocal continuum mechanics, inspired
by the seminal work by S.A. Silling (see~\cite{Sill}),  was studied and the analytical aspects concerning global solutions in energy space were exploited  in the framework of nonlinear hyperelastic constitutive assumptions.
More precisely, the governing equation of the motion of an infinite
body is modeled in~\cite{Coclite_2018}
by the initial-value problem 
\begin{equation}
\label{eq:CP}
\begin{cases}
\partial_{tt} \u(\x,t)=(K\u(\cdot,t))(\x),&\quad \x\in \mathbb{R}^N,\: t >0,\\
\u(\x,0)=\u_0(\x),\:\partial_t \u(\x,0)=\mathbf{v}_0(\x),&\quad \x\in\R^N,
\end{cases}
\end{equation}
where
\begin{equation}
\label{eq:operator}
(K\u)(\x):= \int_{B_\delta(\x)}\mathbf{f}(\x'-\x,\u(\x')-\u(\x))\,d\x',\quad {\mbox{ for every }}\,\x\in\mathbb{R}^N,
\end{equation}
for a given $\delta>0$. 
Of course, the physically meaningful cases correspond to the dimensions $N=1,2, 3$.
From the point of view of peridynamics,
the parameter~$\delta$ takes into account the finite horizon of
the nonlocal bond which is governed by the long-range interaction integral~$K$. 
The $\R^N$-valued  function $\mathbf{f}$ is defined on the set 
$\Omega:=(\mathbb{R}^{N}\setminus\{\mathbf 0\})\times\mathbb{R}^N$
and is supposed to satisfy the following general constitutive assumptions:

\begin{Assumptions}
\item  $\f\in C^1(\Omega;\mathbb{R}^N)$;
\item \label{ass:f.2} $\f(-\y,-\u)=-\f(\y,\u),$ for every $(\y,\,\u)\in\Omega\times \R^N$;
\item \label{ass:f.5} there exists a function $\Phi\in C^2(\Omega)$ such that
\begin{equation*}
 \f=\nabla_{\u}\Phi,\qquad \Phi(\y,\u)=\kappa \frac{|\u|^p}{|\y|^{N+\alpha p}}+
 \Psi(\y,\u),\quad {\mbox{ for every }}\, (\y,\,\u)\in\Omega,
\end{equation*}
where $\kappa,\, p,\,\alpha$ are constants such that
\begin{equation*}
\kappa>0,\qquad 0<\alpha<1,\qquad   p\ge 2,
\end{equation*}
and 
\begin{align*}
& \Psi(\y,\mathbf 0)=0\le \Psi(\y,\u),\\&
|\nabla _\u \Psi(\y,\u)|, |D^2_{\u}\Psi(\y,\u)|\le g(\y),\quad {\mbox{ for every }}\,(\y,\,\u)\in\Omega,
\end{align*}
for some nonnegative function~$g\in L^2_{\rm{loc}}(\R^N)$.
\end{Assumptions}
We recall that in the peridynamics model the
$\R^N$-valued  function~$\u$ models the displacement vector field.
With this respect, assumption~\ref{ass:f.2}
can be seen as a counterpart of Newton's Third Law of Motion (the Action-Reaction Law).
Also, assumption~\ref{ass:f.5} states that the material is hyperelastic (the linear elastic case corresponding to~$p=2$
and~$\Psi=0$,
and the hyperelasticity taking into account
nonlinear elastic responses of the material).

In the present paper we focus on the simplest case planned by the previous theory, namely we will deal with $p=2$ 
(quadratic elastic energy)  and $N=1$ which represents a linear one-dimensional nonlocal mechanical model
(higher dimensional cases can be taken into account as well, but the analysis
is obviously more transparent when~$N=1$).
We intend to exploit all the relevant analytical aspects encoded in this problem and  compare them with their physical counterparts.

In this perspective  \eqref{eq:CP} reduces to the study of the  following Cauchy problem

\begin{equation}
\begin{cases}
\rho\,u_{tt}=-2\,\kappa
\displaystyle\int_{-\delta}^{\delta}\frac{u(t,x)-u(t,x-y)}{|y|^{1+2\,\alpha}}dy
=: K(u),&\quad t>0,\,x\in\R,\\[10pt]
u(0,x)=v_0(x),&\quad x\in\R,\\[5pt]
u_t(0,x)=v_1(x),&\quad x\in\R,
\end{cases}
\label{eq:linear_per}
\end{equation}
where $\delta$, $\kappa$ and $\rho$ are positive real constants and $0<\alpha<1$.

As customary, the integral in~\eqref{eq:linear_per} is interpreted
in the ``principal value'' sense to ``average out'' the singularity,
namely
\begin{equation}\label{PVSTR8}\begin{split}
\displaystyle\int_{-\delta}^{\delta}\frac{u(t,x)-u(t,x-y)}{|y|^{1+2\,\alpha}}\,dy
&:=
\lim_{\epsilon\to0^+}
\displaystyle\int_{(-\delta,\delta)\setminus(-\epsilon,\epsilon)}
\frac{u(t,x)-u(t,x-y)}{|y|^{1+2\,\alpha}}\,dy\\&=\frac12\lim_{\epsilon\to0^+}
\displaystyle\int_{(-\delta,\delta)\setminus(-\epsilon,\epsilon)}
\frac{2u(t,x)-u(t,x+y)-u(t,x-y)}{|y|^{1+2\,\alpha}}\,dy
\\&=\frac12
\displaystyle\int_{-\delta}^{\delta}
\frac{2u(t,x)-u(t,x+y)-u(t,x-y)}{|y|^{1+2\,\alpha}}\,dy
.\end{split}\end{equation}
The evolution problem in~\eqref{eq:linear_per}
is explicitly solvable, according to the following result:

\begin{theorem}
\label{linsol}
Let\footnote{In this paper, for simplicity,
unless differently specified, we will take
the initial data~$v_0$ and~$v_1$
in the
Schwartz space of smooth and rapidly decreasing functions
(more general settings can be treated similarly
with technical modifications).} $v_0,\,v_1\in\mathcal{S}(\mathbb{R})$ and $0<\alpha <1$.
Then problem  \eqref{eq:linear_per} has the unique solution  
$u:\mathbb{R}_+\times\mathbb{R}\rightarrow \mathbb{R}$   given by   
\begin{equation}
u(t,x)
=\int_{\mathbb{R}} e^{-i\xi x}\left[\widehat{v_0}(\xi)\cos\left(\omega(\xi)\,t\right)
+\frac{\widehat{v_1}(\xi)}{\omega(\xi)}\sin\left(\omega(\xi)\,t\right)\right] d\xi\,,
\label{eq:sol}
\end{equation}
where~$\widehat{v_0}(\xi)$ and  $\widehat{v_1}(\xi)$ represent the Fourier transform\footnote{We use here the nonunitary convention that\label{FOUCO}
$$ \widehat{v}(\xi):=\frac1{2\pi}\int_\R v(x)\,e^{ix\xi}\,dx.$$
In this way, the inversion formula reads
$$ v(x)=\int_\R \widehat v(\xi)\,e^{-i\xi x}\,d\xi.$$}
of $v_0(x)$ and $v_1(x)$,
and~$\omega: \mathbb{R}\rightarrow \mathbb{R}^+$ is the dispersion relation defined by 
\begin{equation}
\omega(\xi)=\left(\frac{2\kappa}{\rho\,\delta^{2\alpha}}\,\int_{-1}^{1}\frac{1-\cos (\xi\delta z)}{|z|^{1+2\alpha}}dz\right)^{1/2}\,.
\label{eq:disp_rel}
\end{equation}
Additionally,
\begin{equation}
\omega^2(\xi)\le
\frac{C\kappa}{\rho\,\delta^{2\alpha}}\,\left[\frac{|\xi|^{2}
\delta^{2}}{1-\alpha}\,\min\left\{\frac{1}{|\xi|^{2-2\alpha}
\delta^{2-2\alpha}},1\right\}
+\frac1\alpha\chi_{\left( 0,1\right)}\left(\frac2{|\xi|\delta}\right) \,\left( \frac{|\xi|^{2\alpha}\delta^{2\alpha}}{2^{2\alpha}}-1\right)\right]
\label{DFCH},\end{equation}
for some constant~$C>0$
(independent of all the parameters involved in
problem~\eqref{eq:linear_per}).
\end{theorem}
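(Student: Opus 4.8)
The plan is to Fourier transform~\eqref{eq:linear_per} in the space variable~$x$, which replaces the nonlocal operator~$K$ by multiplication by a symbol and reduces the problem to a one-parameter family of harmonic-oscillator ODEs in~$t$. With the (nonunitary) Fourier convention of footnote~\ref{FOUCO}, the translates~$u(t,\cdot\pm y)$ have Fourier transforms~$e^{\mp iy\xi}\widehat u(t,\xi)$, and, since~$|2u(t,x)-u(t,x+y)-u(t,x-y)|\le y^2\|\partial_{xx}^2u(t,\cdot)\|_{L^\infty}$ and~$1-2\alpha>-1$, the principal value in~\eqref{PVSTR8}, in its symmetrised form, is an absolutely convergent integral. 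Transforming that symmetrised form in~$x$ gives
\begin{equation*}
\widehat{K(u)}(t,\xi)=-\kappa\,\widehat u(t,\xi)\int_{-\delta}^{\delta}\frac{2-e^{-iy\xi}-e^{iy\xi}}{|y|^{1+2\alpha}}\,dy
=-2\kappa\,\widehat u(t,\xi)\int_{-\delta}^{\delta}\frac{1-\cos(y\xi)}{|y|^{1+2\alpha}}\,dy,
\end{equation*}
and, after the rescaling~$y=\delta z$ and division by~$\rho$, the transformed equation reads exactly~$\partial_{tt}\widehat u(t,\xi)=-\omega^2(\xi)\,\widehat u(t,\xi)$ with~$\omega$ as in~\eqref{eq:disp_rel} (the integral converges since~$1-\cos(y\xi)=O(y^2)$ near~$y=0$, and~$\omega(\xi)>0$ for~$\xi\neq0$).

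With the data~$\widehat u(0,\xi)=\widehat{v_0}(\xi)$, $\partial_t\widehat u(0,\xi)=\widehat{v_1}(\xi)$, this linear oscillator ODE has the unique solution
\begin{equation*}
\widehat u(t,\xi)=\widehat{v_0}(\xi)\cos\bigl(\omega(\xi)t\bigr)+\frac{\widehat{v_1}(\xi)}{\omega(\xi)}\sin\bigl(\omega(\xi)t\bigr),
\end{equation*}
the second term being read through its limit~$t\,\widehat{v_1}(0)$ at~$\xi=0$ (there~$\omega(\xi)\to0^+$, hence~$\omega(\xi)^{-1}\sin(\omega(\xi)t)\to t$), and Fourier inversion yields~\eqref{eq:sol}. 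Since~$v_0,v_1\in\mathcal{S}(\mathbb{R})$, while~$|\omega(\xi)^{-1}\sin(\omega(\xi)t)|\le|t|$ and~$\omega,\omega^2$ grow at most polynomially in~$|\xi|$ by~\eqref{DFCH}, the integrand of~\eqref{eq:sol} and each of its~$t$- and~$x$-derivatives is, locally uniformly in~$t$, dominated by a Schwartz function of~$\xi$; one may thus differentiate under the integral, verify that~$u$ solves~\eqref{eq:linear_per} (using~$\frac1\rho K[e^{-i\xi\cdot}]=-\omega^2(\xi)e^{-i\xi\cdot}$ and commuting~$K$ with the~$\xi$-integral) and attains the initial data by inversion. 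Uniqueness is immediate: the difference of two solutions has Fourier transform solving the homogeneous oscillator equation with zero data, hence vanishes identically.

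For the bound~\eqref{DFCH}, set~$a:=|\xi|\delta$; since the integrand in~\eqref{eq:disp_rel} is even in~$z$,
\begin{equation*}
\omega^2(\xi)=\frac{4\kappa}{\rho\,\delta^{2\alpha}}\,I(a),\qquad I(a):=\int_0^1\frac{1-\cos(az)}{z^{1+2\alpha}}\,dz,
\end{equation*}
and one uses throughout~$1-\cos t\le\min\{t^2/2,\,2\}$. If~$a\le2$ then~$az\le2$ on~$[0,1]$, so~$1-\cos(az)\le(az)^2/2$ yields~$I(a)\le a^2/(4(1-\alpha))$; this is~\eqref{DFCH} already for~$a\le1$ (where~$\min\{a^{2\alpha-2},1\}=1$ and~$\chi_{(0,1)}(2/a)=0$), while for~$1\le a\le2$ one inserts~$a^2=a^{2\alpha}a^{2-2\alpha}\le4a^{2\alpha}$. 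If~$a>2$, split~$I(a)=\int_0^{2/a}+\int_{2/a}^{1}$: on~$[0,2/a]$ the bound~$1-\cos(az)\le(az)^2/2$ gives~$\int_0^{2/a}\le a^{2\alpha}/(2^{2\alpha}(1-\alpha))$, and on~$[2/a,1]$ the bound~$1-\cos(az)\le2$ gives~$\int_{2/a}^{1}\le\alpha^{-1}(a^{2\alpha}2^{-2\alpha}-1)$; since here~$2^{2\alpha}\ge1$ and~$\chi_{(0,1)}(2/a)=1$, adding the two pieces produces~\eqref{DFCH}. In every case the constant~$C=4$ is admissible.

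The whole scheme is elementary; the points that call for care are the symmetrisation of the principal value before transforming (so that only the real factor~$1-\cos$ survives in the symbol), the removable singularity of~$\omega(\xi)^{-1}\sin(\omega(\xi)t)$ at~$\xi=0$ together with its role in justifying differentiation under the integral sign, and — in~\eqref{DFCH} — choosing the split point~$z=2/a$ (rather than the more obvious~$z=1/a$) so that the constant~$2^{2\alpha}$ appears exactly as stated.
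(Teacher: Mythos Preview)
Your proof is correct and follows essentially the same route as the paper. Both arguments rest on Fourier methods to reduce to the harmonic-oscillator ODE in~$t$ with symbol~$\omega^2(\xi)$, and both derive~\eqref{DFCH} from the pointwise bound~$1-\cos t\le\min\{t^2/2,2\}$ with the split at~$z=2/(|\xi|\delta)$; the only cosmetic difference is that the paper reaches~\eqref{eq:sol} via a plane-wave ansatz~$e^{\pm(i\omega t+i\xi x)}$ and superposition (citing \cite{Coclite_2018} for existence and uniqueness), whereas you transform the PDE directly and argue uniqueness on the Fourier side.
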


\begin{proof}
The existence and uniqueness  of the solution of   \eqref{eq:linear_per}  follow from \cite{Coclite_2018}.
Thus, to check~\eqref{eq:sol}, up to a superposition,
we seek a solution of the form
\begin{equation*}
u(t,x)=e^{\pm(i\omega t+i \xi x)}.
\end{equation*}
After  substituting this expression into \eqref{eq:linear_per}, we  obtain the equation
\begin{align*}
\omega^2
=&\frac{2\kappa}{\rho}\int_{-\delta}^{\delta}\frac{1-e^{\mp i\xi y}}{|y|^{1+2\alpha}}dy
\nonumber\\
=&\frac{2\kappa}{\rho}\int_{-\delta}^{\delta}\frac{1-\cos( \xi y)}{|y|^{1+2\alpha}}dy
\mp \frac{2i\kappa}{\rho}\int_{-\delta}^{\delta}\frac{\sin (\xi y)}{|y|^{1+2\alpha}}dy
\nonumber\\
=&\frac{2\kappa}{\rho}\int_{-\delta}^{\delta}\frac{1-\cos( \xi y)}{|y|^{1+2\alpha}}dy
=\frac{2\kappa}{\rho}\,\delta^{-2\alpha}\int_{-1}^{1}\frac{1-\cos (\xi\delta z)}{|z|^{1+2\alpha}}dz\,,
\end{align*}
which leads to the dispersion relation \eqref{eq:disp_rel}.

Introducing the functions
$$
\alpha(\xi):=\frac{1}{2}\left(\widehat{v_0}(\xi)+i\frac{\widehat{v_1}(\xi)}{\omega(\xi)}\right)\qquad{\mbox{and}}
\qquad
\beta(\xi):=\frac{1}{2}\left(\widehat{v_0}(\xi)-i\frac{\widehat{v_1}(\xi)}{\omega(\xi)}\right),
$$
we have that~$\alpha+\beta=\widehat{v_0}$ and~$-i\omega(\alpha-\beta)=\widehat{v_1}$.
As a result, if
\begin{equation}\label{FO-OJS}
u(t,x):=\int_{\mathbb{R}}\left\{\alpha(\xi)e^{-i\left[\xi x+\omega(\xi)\,t\right]}
+\beta(\xi)e^{-i\left[\xi x-\omega(\xi)\,t\right]}\right\}\,d\xi\,,
\end{equation}
we see that
$$ u(0,x)=
\int_{\mathbb{R}}\left\{\alpha(\xi)e^{-i\xi x}
+\beta(\xi)e^{-i\xi x}\right\}\,d\xi=\int_{\mathbb{R}}\widehat{v_0}(\xi)\,e^{-i\xi x}
\,d\xi=v_0(x)
$$
and
$$ u_t(0,x)=
\int_{\mathbb{R}}\left\{-i\alpha(\xi) \omega(\xi) e^{-i \xi x }
+i \beta(\xi)\omega(\xi)e^{-i\xi x }\right\}\,d\xi=\int_{\mathbb{R}}\widehat{v_1}(\xi)\,e^{-i\xi x}
\,d\xi=v_1(x),$$
hence~\eqref{FO-OJS} provides a solution  of  \eqref{eq:linear_per}. We can also rewrite~\eqref{FO-OJS} in the form given by~\eqref{eq:sol}.
Additionally, we observe that  our assumptions 
$v_0,\,v_1\in \mathcal{S}(\mathbb{R})$  allow us to state that~$\widehat{v_0},\,\widehat{v_1}\in \mathcal{S}(\mathbb{R})$.

Moreover, for every~$t\in\R$,
we have that
\begin{equation}\label{ILCOS}
1-\cos t\le\min\left\{\frac{t^2}2,2\right\},\end{equation}
hence it follows from~\eqref{eq:disp_rel} that
\begin{eqnarray*}
\omega^2(\xi)&=&
\frac{2\kappa}{\rho\,\delta^{2\alpha}}\,\int_{-1}^{1}\frac{1-\cos (\xi\delta z)}{|z|^{1+2\alpha}}dz\\
&\le&
\frac{2\kappa}{\rho\,\delta^{2\alpha}}\,\int_{-1}^{1}\frac{\min\left\{\frac{\xi^2\delta^2 z^2}2,2\right\}}{|z|^{1+2\alpha}}dz\\
&\le&
\frac{2\kappa}{\rho\,\delta^{2\alpha}}\,\left[\int_{\left\{ |z|\le
\min\left\{\frac2{|\xi|\delta},1\right\}\right\}}{\frac{\xi^2\delta^2 z^2}{2|z|^{1+2\alpha}}}dz
+\int_{\left\{ \frac2{|\xi|\delta}<|z|\le1\right\}}\frac{2}{|z|^{1+2\alpha}}dz\right]\\&=&
\frac{2\kappa}{\rho\,\delta^{2\alpha}}\,\left[\frac{|\xi|^{2}
\delta^{2}}{2-2\alpha}\,\min\left\{\frac{2^{2-2\alpha}}{|\xi|^{2-2\alpha}
\delta^{2-2\alpha}},1\right\}
+\frac2\alpha\chi_{\left( 0,1\right)}\left(\frac2{|\xi|\delta}\right) \,\left( \frac{|\xi|^{2\alpha}\delta^{2\alpha}}{2^{2\alpha}}-1\right)\right]
,\end{eqnarray*}
that gives~\eqref{DFCH}.\end{proof}

We point out that
problem~\eqref{eq:linear_per} 
reduces to the classical wave equation as~$\alpha\to1^-$,
in a sense which is made precise in Lemma~\ref{BAL:1}.

Similarly,
the explicit solution provided
in~\eqref{eq:sol}
and~\eqref{eq:disp_rel} approaches the one obtained
by Fourier methods for the classical wave equation,
as specified in Lemma~\ref{BAL:2}.

\section{Dispersion relation}
\label{Disprel}

We now deepen our analysis
of the dispersion relation introduced in~\eqref{eq:disp_rel}
by supporting the estimate in~\eqref{DFCH}
with some precise asymptotics:

\begin{theorem}\label{ASYLOANDHI}
For $\delta >0$ and $0<\alpha <1$, we have that
\begin{equation}
\lim_{\xi\rightarrow 0}\xi^{-2}\omega^2(\xi)= \frac{\kappa\,\delta^{2(1-\alpha)}}{(1-\alpha)\,\rho}\label{LS:XD}
\end{equation}
and
\begin{equation}
\lim_{\xi\rightarrow \pm\infty}|\xi|^{-2\alpha}\omega^2(\xi)=
\frac{4\kappa
}{\rho}\,\int_{0}^{+\infty}
\frac{1-\cos \tau}{\tau^{1+2\alpha}}d\tau.
\label{IMP}
\end{equation}
\end{theorem}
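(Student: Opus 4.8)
The plan is to work directly from the integral representation \eqref{eq:disp_rel} and pass to the limit inside the integral, handling the two regimes by two different changes of variable; in both cases the only analytical input needed is dominated (or monotone) convergence, with dominating functions supplied by the elementary bound \eqref{ILCOS} (equivalently by the estimate \eqref{DFCH} already proved).

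For the low-frequency asymptotics \eqref{LS:XD}, I would divide \eqref{eq:disp_rel} by $\xi^2$ to get
$$\xi^{-2}\omega^2(\xi)=\frac{2\kappa}{\rho\,\delta^{2\alpha}}\int_{-1}^{1}\frac{1-\cos(\xi\delta z)}{\xi^2\,|z|^{1+2\alpha}}\,dz.$$
The pointwise limit of the integrand as $\xi\to0$ is $\frac{\delta^2 z^2}{2|z|^{1+2\alpha}}=\frac{\delta^2}{2}|z|^{1-2\alpha}$, and by \eqref{ILCOS} the integrand is dominated by this same function, which is integrable on $(-1,1)$ precisely because $\alpha<1$ (so that $1-2\alpha>-1$). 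Dominated convergence then yields $\xi^{-2}\omega^2(\xi)\to\frac{\kappa\,\delta^{2-2\alpha}}{\rho}\int_{-1}^{1}|z|^{1-2\alpha}\,dz=\frac{\kappa\,\delta^{2-2\alpha}}{(1-\alpha)\,\rho}$, which is \eqref{LS:XD}.

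For the high-frequency asymptotics \eqref{IMP}, since $\omega^2$ is even in $\xi$ it suffices to treat $\xi\to+\infty$. Here I would substitute $\tau=\xi\delta z$ in \eqref{eq:disp_rel}, which turns the domain $(-1,1)$ into $(-\xi\delta,\xi\delta)$ and gives
$$|\xi|^{-2\alpha}\omega^2(\xi)=\frac{2\kappa}{\rho}\int_{-\xi\delta}^{\xi\delta}\frac{1-\cos\tau}{|\tau|^{1+2\alpha}}\,d\tau.$$
One checks that $\int_{\mathbb{R}}\frac{1-\cos\tau}{|\tau|^{1+2\alpha}}\,d\tau$ is finite: near $\tau=0$ the integrand behaves like $\frac12|\tau|^{1-2\alpha}$, integrable because $\alpha<1$, while as $|\tau|\to\infty$ it is bounded by $2|\tau|^{-1-2\alpha}$, integrable because $\alpha>0$. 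Since the integrand is nonnegative, letting $\xi\to+\infty$ (by monotone convergence, or by dominated convergence against the two tail bounds just used) gives $|\xi|^{-2\alpha}\omega^2(\xi)\to\frac{2\kappa}{\rho}\int_{\mathbb{R}}\frac{1-\cos\tau}{|\tau|^{1+2\alpha}}\,d\tau=\frac{4\kappa}{\rho}\int_{0}^{+\infty}\frac{1-\cos\tau}{\tau^{1+2\alpha}}\,d\tau$, which is \eqref{IMP}.

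There is no serious obstacle: the argument is a routine limit interchange, and the only points deserving a word are the integrability of the limiting integrands at the origin, which is exactly where $\alpha<1$ is used, and the tail integrability at $\tau=\infty$, which is where $\alpha>0$ is used, so both restrictions on the range of $\alpha$ enter the statement. If one wishes to avoid invoking \eqref{ILCOS} directly, the bound \eqref{DFCH}, being consistent with both the $\xi^2$ and the $|\xi|^{2\alpha}$ scalings, can instead be used to produce the dominating functions.
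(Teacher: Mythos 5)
Your proposal is correct and follows essentially the same route as the paper: dominated convergence with the bound \eqref{ILCOS} for the limit \eqref{LS:XD}, and the change of variable $\tau=\xi\delta z$ reducing \eqref{IMP} to the convergence of the improper integral $\int_0^{+\infty}\frac{1-\cos\tau}{\tau^{1+2\alpha}}\,d\tau$. Your explicit check of the integrability of the limiting integrand at $0$ and at infinity is a minor elaboration of a step the paper leaves implicit.
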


We observe that
the asymptotics in~\eqref{LS:XD}
and~\eqref{IMP} show a different power law behavior of the dispersion relation~$\omega$
at zero and at infinity. This different behavior is also confirmed numerically in Figure~\ref{LOGOMEXd},
where~$\omega$ is plotted in logarithmic scale: as usual, in this setting, the two different power laws
correspond to straight lines with different slopes.
Furthermore, we point out that a suitable constitutive restriction on the elastic material parameter $\kappa$ 
should take into account the asymptotic scaling stated in \eqref{LS:XD}, namely
\begin{equation*}
\kappa\sim\frac{1}{\delta^{2(1-\alpha)}}.
\end{equation*}

\begin{figure}[ht!]
\centering
\includegraphics[scale=0.9]{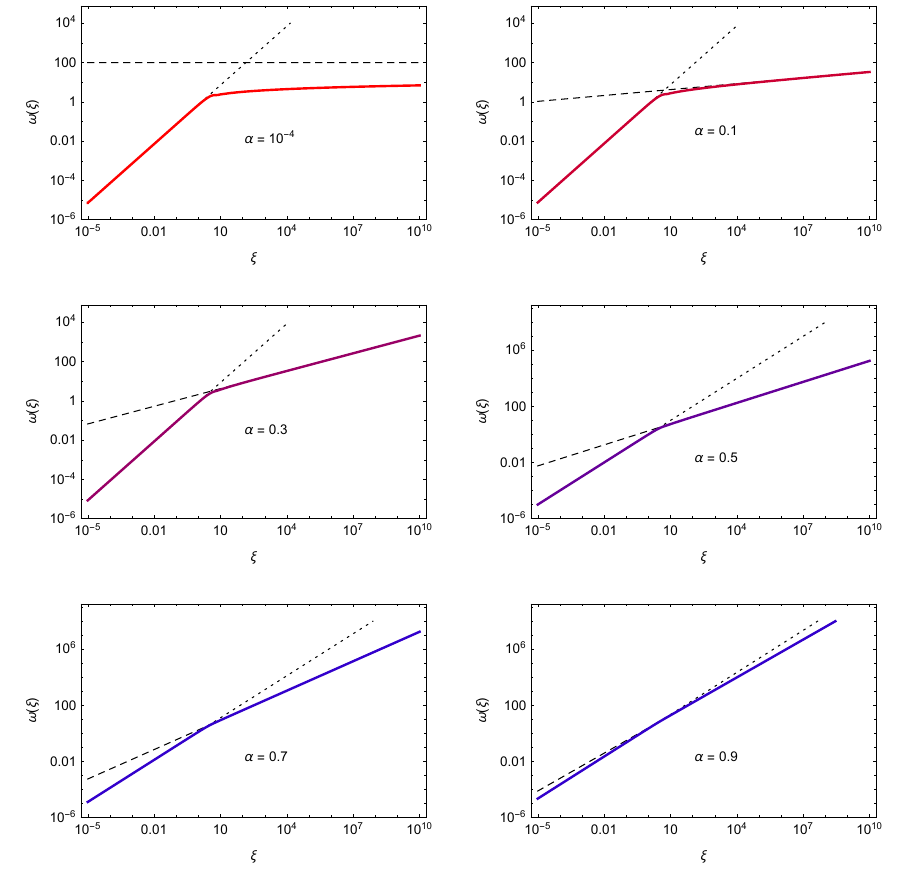}
\caption{\sl \footnotesize Numerical plots of~$\omega$
in logarithmic scale and $\kappa=1/2$ and $\rho=\delta=1$.}\label{LOGOMEXd}
\end{figure}

\begin{proof}[Proof of Theorem~\ref{ASYLOANDHI}] Let~$\xi_j$ be an infinitesimal sequence and
$$ F_j(z):=\frac{1-\cos (\xi_j\delta z)}{\xi_j^2\,|z|^{1+2\alpha}}=
\frac{\delta^2 \,|z|^{1-2\alpha}\big(1-\cos (\xi_j\delta z)\big)}{(\xi_j \delta z)^2}
.$$
We point out that, for each~$z\in[-1,1]$,
$$\lim_{j\to+\infty} F_j(z)=
\frac{\delta^2 \,|z|^{1-2\alpha}}{2}.$$
Additionally, recalling~\eqref{ILCOS},
$$ |F_j(z)|\le 
\frac{\xi_j^2\delta^2 z^2}{2\xi_j^2\,|z|^{1+2\alpha}}
=\frac{\delta^2 |z|^{1-2\alpha}}{2}
=:G(z).$$
Since~$G\in L^1([-1,1])$, we can use the Dominated Convergence Theorem and infer that
$$ \lim_{j\to+\infty}\int_{-1}^1 \frac{1-\cos (\xi_j\delta z)}{\xi_j^2\,|z|^{1+2\alpha}}\,dz=
\lim_{j\to+\infty}\int_{-1}^1 F_j(z)\,dz=
\int_{-1}^1 \frac{\delta^2 \,|z|^{1-2\alpha}}{2}\,dz=\frac{\delta^2}{2-2\alpha}.$$
{F}rom this equation and the definition of~$\omega$ in~\eqref{eq:disp_rel}
we plainly obtain the desired result in~\eqref{LS:XD}.

Moreover, using the substitution~$\tau:=|\xi|\delta z$,
\begin{eqnarray*}
\lim_{\xi\rightarrow \pm\infty}|\xi|^{-2\alpha}\omega^2(\xi)
&=&\lim_{\xi\rightarrow \pm\infty}
\frac{4\kappa\,
|\xi|^{-2\alpha}}{\rho\,\delta^{2\alpha}}\,\int_{0}^{1}
\frac{1-\cos (\xi\delta z)}{z^{1+2\alpha}}dz\\
&=&\lim_{\xi\rightarrow \pm\infty}
\frac{4\kappa
}{\rho}\,\int_{0}^{|\xi|\delta}
\frac{1-\cos \tau}{\tau^{1+2\alpha}}d\tau,
\end{eqnarray*}
from which~\eqref{IMP} plainly follows.
\end{proof}

Now we present a sharpening of Theorem~\ref{ASYLOANDHI} in a logarithmic scale,
in view of an asymptotic as~$\alpha\to1^-$.

\begin{theorem}\label{OMEPI-P-LOGA}
Let $\delta >0$ and $\frac12\le\alpha <1$. Then, given~$b>0$ there exists~$C>0$, that depends only on~$b$ and~$\delta$,
such that for all~$\xi\in\R\setminus(-b,b)$
\begin{equation} \label{3.5}\left|\frac{\sqrt{(1-\alpha)\rho}\,\omega(\xi)}{\sqrt\kappa\,|\xi|^{\alpha}}-1\right|
\le \sqrt{C\,(1-\alpha)}. \end{equation}
Also, there exists~$c\in\left(0,\frac12\right)$, that depends only on~$b$, $\delta$ and~$\kappa$, such that
if~$1-\alpha\le c$ then, for all~$\xi\in\R\setminus(-b,b)$,
\begin{equation} \label{3.5BIS}
\left|\log\frac{\sqrt{(1-\alpha)\rho}\,\omega(\xi)}{\sqrt\kappa}-\alpha\log|\xi|\right|
\le \sqrt{C\,(1-\alpha)} .\end{equation}
\end{theorem}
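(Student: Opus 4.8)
The plan is to prove both estimates by controlling the quantity
$$
Q_\alpha(\xi):=\frac{(1-\alpha)\rho\,\omega^2(\xi)}{\kappa\,|\xi|^{2\alpha}}
=\frac{2(1-\alpha)}{\delta^{2\alpha}|\xi|^{2\alpha}}\int_{-1}^{1}\frac{1-\cos(\xi\delta z)}{|z|^{1+2\alpha}}\,dz
$$
and showing that $|Q_\alpha(\xi)-1|\le C(1-\alpha)$ uniformly for $|\xi|\ge b$ and $\tfrac12\le\alpha<1$; the two displayed inequalities then follow by taking square roots (for~\eqref{3.5}, using $|\sqrt{t}-1|\le|t-1|$ for $t\ge0$) and logarithms (for~\eqref{3.5BIS}, using that $|\log t|\le 2|t-1|$ once $|t-1|\le\tfrac12$, which is guaranteed by choosing $c$ small enough that $C(1-\alpha)\le\tfrac14$). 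So the core is an upper and lower bound on $Q_\alpha(\xi)$ with error $O(1-\alpha)$.

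First I would split the integral using the substitution $\tau=|\xi|\delta z$, writing
$$
\int_{-1}^{1}\frac{1-\cos(\xi\delta z)}{|z|^{1+2\alpha}}\,dz
=2\,(|\xi|\delta)^{2\alpha}\int_{0}^{|\xi|\delta}\frac{1-\cos\tau}{\tau^{1+2\alpha}}\,d\tau,
$$
so that $Q_\alpha(\xi)=4(1-\alpha)\int_0^{|\xi|\delta}\tau^{-1-2\alpha}(1-\cos\tau)\,d\tau$. Now I would compare this with $4(1-\alpha)\int_0^\infty\tau^{-1-2\alpha}(1-\cos\tau)\,d\tau$. The tail $\int_{|\xi|\delta}^\infty$ is bounded by $2\int_{|\xi|\delta}^\infty\tau^{-1-2\alpha}\,d\tau=\tfrac{1}{\alpha}(|\xi|\delta)^{-2\alpha}\le\tfrac{2}{b^{2\alpha}\wedge\cdots}$, and multiplied by $(1-\alpha)$ this is $O(1-\alpha)$ uniformly on $|\xi|\ge b$ (here $b$ enters: since $\alpha\ge\tfrac12$ and $|\xi|\delta\ge b\delta$, one has $(|\xi|\delta)^{-2\alpha}\le\max\{1,(b\delta)^{-1}\}$ or similar, a bound depending only on $b,\delta$). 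Then it remains to show that
$$
4(1-\alpha)\int_0^\infty\frac{1-\cos\tau}{\tau^{1+2\alpha}}\,d\tau\longrightarrow 1
$$
with error $O(1-\alpha)$ as $\alpha\to1^-$. This I would do by the classical evaluation $\int_0^\infty\tau^{-1-2\alpha}(1-\cos\tau)\,d\tau=-\Gamma(-2\alpha)\cos(\pi\alpha)=\tfrac{\Gamma(1-2\alpha)}{}\cdots$ — more usefully, splitting at $\tau=1$: near $0$, $\int_0^1\tau^{-1-2\alpha}(1-\cos\tau)d\tau=\int_0^1\tau^{1-2\alpha}\big(\tfrac12+O(\tau^2)\big)d\tau=\tfrac{1}{2(2-2\alpha)}+O(1)=\tfrac{1}{4(1-\alpha)}+O(1)$, while $\int_1^\infty\tau^{-1-2\alpha}(1-\cos\tau)d\tau=O(1)$ uniformly for $\alpha\in[\tfrac12,1]$. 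Hence $\int_0^\infty=\tfrac{1}{4(1-\alpha)}+O(1)$, so $4(1-\alpha)\int_0^\infty=1+O(1-\alpha)$, exactly as needed.

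Assembling the pieces: $Q_\alpha(\xi)=4(1-\alpha)\int_0^\infty\cdots-4(1-\alpha)\int_{|\xi|\delta}^\infty\cdots=1+O(1-\alpha)-O(1-\alpha)=1+O(1-\alpha)$, with all implied constants depending only on $b$ and $\delta$. I expect the main obstacle to be purely bookkeeping: tracking that every error term is genuinely $O(1-\alpha)$ \emph{uniformly} in $\xi$ on the range $|\xi|\ge b$ and in $\alpha$ on $[\tfrac12,1)$ — in particular that the constant in the near-zero expansion $\int_0^1\tau^{1-2\alpha}(\tfrac12-\cos\tau/\tau^2+1/\tau^2\cdot\ldots)$, hmm, rather $\int_0^1\tau^{-1-2\alpha}(1-\cos\tau-\tfrac12\tau^2)\,d\tau$ converges uniformly (the integrand is $O(\tau^{3-2\alpha})$, integrable and bounded for $\alpha<2$), and that the factor $(1-\alpha)$ in front really does kill the $O(1)$ remainders. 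One should also double-check the conversion from~\eqref{3.5} to~\eqref{3.5BIS}: writing $R(\xi):=\tfrac{\sqrt{(1-\alpha)\rho}\,\omega(\xi)}{\sqrt\kappa\,|\xi|^\alpha}$, \eqref{3.5} says $|R-1|\le\sqrt{C(1-\alpha)}$, and $\log\tfrac{\sqrt{(1-\alpha)\rho}\,\omega}{\sqrt\kappa}-\alpha\log|\xi|=\log R$, so once $1-\alpha\le c$ with $c$ chosen so that $\sqrt{Cc}\le\tfrac12$ we get $|\log R|\le 2|R-1|\le 2\sqrt{C(1-\alpha)}$, which is~\eqref{3.5BIS} after renaming the constant $C$.
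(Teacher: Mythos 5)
Your argument is correct and follows essentially the same route as the paper: after the substitution $\tau=|\xi|\delta z$, both proofs reduce to the uniform estimate $\int_0^{|\xi|\delta}\frac{1-\cos\tau}{\tau^{1+2\alpha}}\,d\tau=\frac{1}{4(1-\alpha)}+O(1)$ for $|\xi|\ge b$ (you organize it as $\int_0^{+\infty}$ minus a tail, the paper bounds the partial integral directly by splitting at $\tau=1$), then pass to $\omega$ via $|\sqrt t-1|\le|t-1|$ --- the same algebra as the paper's division by $\omega(\xi)+\sqrt\kappa\,|\xi|^\alpha/\sqrt{(1-\alpha)\rho}$ --- and to the logarithm via an elementary inequality of the type $|\log t|\le 2|t-1|$ near $t=1$, just as the paper uses $|\log(1+r)|\le4|r|$. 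The only cosmetic slip is in the tail estimate, where the bound should be $(|\xi|\delta)^{-2\alpha}\le\max\{1,(b\delta)^{-2}\}$ rather than $\max\{1,(b\delta)^{-1}\}$; this changes nothing since the constant still depends only on $b$ and $\delta$.
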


We observe that~\eqref{3.5BIS} gives a convergence of the dispersion relation
to a straight line in logarithmic scale (with an explicit error bound). Also~\eqref{3.5BIS} states that
this convergence is uniform\footnote{We cannot expect uniform convergence up to the origin. Indeed,
as we will see in~\eqref{ENCHA1}, near the origin
$$ \omega(\xi)=\frac{\sqrt\kappa |\xi|\delta^{1-\alpha}}{\sqrt{(1-\alpha)\rho}}\left(
1+O(\xi^2)\right)$$
and therefore
\begin{eqnarray*}
\left|\log\frac{\sqrt{(1-\alpha)\rho}\,\omega(\xi)}{\sqrt\kappa}-\alpha\log|\xi|\right|=
(1-\alpha)\left|\log\delta+\log|\xi|\right|
\end{eqnarray*}
which diverges as~$\xi\to0$.} outside the origin. For a numerical evidence of the
convergence of the dispersion relation
to a straight line in logarithmic scale see Figure~\ref{LOGOME2}.

\begin{figure}[ht!]
\centering
\includegraphics[scale=0.65]{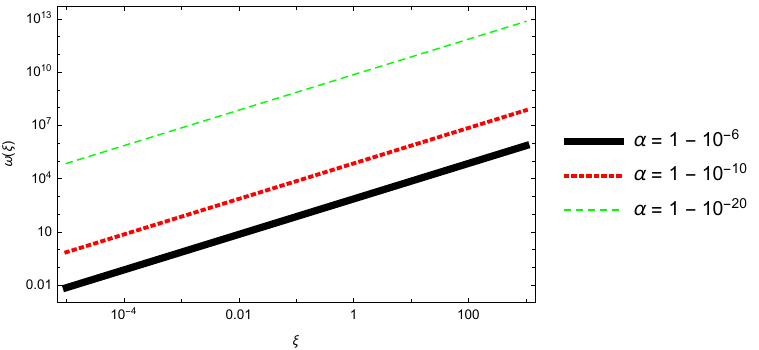}$\;\;$
\includegraphics[scale=0.65]{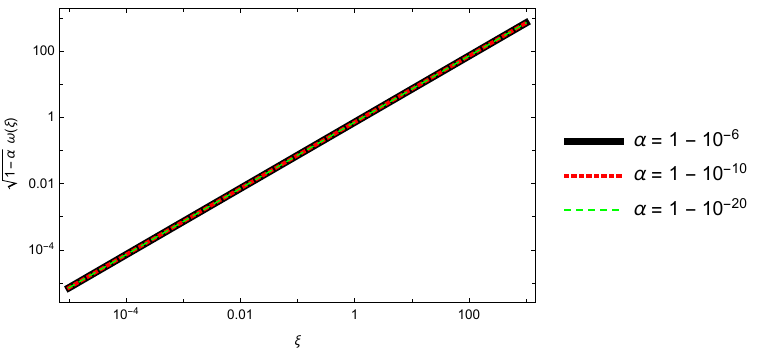}
\caption{\sl \footnotesize Numerical plots of~$\omega$ (left) and~$\sqrt{1-\alpha}\,\omega$ (right)
in logarithmic scale for~$\alpha=1-10^{-6},\,1-10^{-10},\,1-10^{-20}$
and $\kappa=1/2$ and $\rho=\delta=1$.}\label{LOGOME2}
\end{figure}

\begin{proof}[Proof of Theorem~\ref{OMEPI-P-LOGA}] Let~$a_0\in(0,1)$.
First of all, we claim that there exists~$C>0$ depending only on~$a_0$ such that for every~$t\ge a_0$
\begin{equation}\label{3.3}
\left|\int_0^t\frac{1-\cos\tau}{\tau^{1+2\alpha}}d\tau-\frac{1}{4(1-\alpha)}\right|\le C.
\end{equation}
To check this, we distinguish two cases, according to whether~$t\in[a_0,1]$
or~$t>1$. If~$t\in[a_0,1]$, we use that
$$ 1-\cos\tau-\frac{\tau^2}2+\frac{\tau^3}6\ge0\qquad{\mbox{for all }}\tau\in\R$$
and we see that
\begin{equation}\label{PlMS}
\int_0^t\frac{1-\cos\tau}{\tau^{1+2\alpha}}d\tau\ge
\int_0^t\frac{\frac{\tau^2}2-\frac{\tau^3}6}{\tau^{1+2\alpha}}d\tau=\frac{t^{2 - 2 \alpha}}{4 (1-\alpha)}+
\frac{t^{3 - 2 \alpha}}{6 (2 \alpha - 3)} .
\end{equation}
Similarly, since
$$ 1-\cos\tau-\frac{\tau^2}2\le0\qquad{\mbox{for all }}\tau\in\R,$$
we obtain that
\begin{equation*}
\int_0^t\frac{1-\cos\tau}{\tau^{1+2\alpha}}d\tau\le
\int_0^t\frac{\frac{\tau^2}2}{\tau^{1+2\alpha}}d\tau=\frac{t^{2 - 2 \alpha}}{4 (1-\alpha)} . 
\end{equation*}
By combining this and~\eqref{PlMS}, we obtain that, for all~$t\in[a_0,1]$,
\begin{eqnarray*}
\left|
\int_0^t\frac{1-\cos\tau}{\tau^{1+2\alpha}}d\tau
-\frac{1}{4 (1-\alpha)}
\right|\le
\frac{1-t^{2 - 2 \alpha}}{4 (1-\alpha)}+
\frac{t^{3 - 2 \alpha}}{6 (3-2 \alpha)}\le
\frac{1-a_0^{2(1 - \alpha)}}{4 (1-\alpha)}+
\frac{1}{6}.
\end{eqnarray*}
Thus, since
$$ a_0^{2(1 - \alpha)}=\exp\left(
(1-\alpha)\log(a_0^2)
\right)\ge 1+(1-\alpha)\log(a_0^2),
$$
thanks to the convexity of the exponential function,
we conclude that
\begin{eqnarray*}
\left|
\int_0^t\frac{1-\cos\tau}{\tau^{1+2\alpha}}d\tau
-\frac{1}{4 (1-\alpha)}
\right|\le-\frac{\log(a_0^2)}{4 (1-\alpha)}+
\frac{1}{6}.
\end{eqnarray*}
This proves~\eqref{3.3} when~$t\in[a_0,1]$. If instead~$t>1$,
we use~\eqref{3.3} with~$t:=1$ to see that
\begin{eqnarray*}
\left|
\int_0^t\frac{1-\cos\tau}{\tau^{1+2\alpha}}d\tau
-\frac{1}{4 (1-\alpha)}
\right|\le
\left|
\int_0^1\frac{1-\cos\tau}{\tau^{1+2\alpha}}d\tau
-\frac{1}{4 (1-\alpha)}
\right|+
\int_1^t\frac{1-\cos\tau}{\tau^{1+2\alpha}}d\tau\le C+
2\int_1^{+\infty}\frac{d\tau}{\tau^{2}},
\end{eqnarray*}
from which we obtain~\eqref{3.3} in this case as well.

Hence, combining~\eqref{eq:disp_rel} and~\eqref{3.3}, for all~$\xi\ge a_0/\delta$,
\begin{eqnarray*}
\frac{\sqrt\kappa \xi^{\alpha}}{\sqrt{(1-\alpha)\rho}}
\left|\omega(\xi)-\frac{\sqrt\kappa \xi^{\alpha}}{\sqrt{(1-\alpha)\rho}}\right|
&\le&
\left(\omega(\xi)+\frac{\sqrt\kappa \xi^{\alpha}}{\sqrt{(1-\alpha)\rho}}\right)
\left|\omega(\xi)-\frac{\sqrt\kappa \xi^{\alpha}}{\sqrt{(1-\alpha)\rho}}\right|\\
&=&
\left|\omega^2(\xi)-\frac{\kappa \xi^{2\alpha}}{(1-\alpha)\rho}\right|\\
&=&\left|\frac{4\kappa}{\rho\,\delta^{2\alpha}}\,\int_{0}^{1}\frac{1-\cos (\xi\delta z)}{z^{1+2\alpha}}dz - \frac{\kappa \xi^{2\alpha}}{(1-\alpha)\rho}\right|\\&=&
\frac{4\kappa \xi^{2\alpha}}{\rho}\,
\left|\int_{0}^{\xi\delta}\frac{1-\cos \tau}{\tau^{1+2\alpha}}d\tau - \frac{1}{4(1-\alpha)}\right|\\&\le&
\frac{C\kappa \xi^{2\alpha}}{\rho}
\end{eqnarray*}
and therefore the claim in~\eqref{3.5} plainly follows by taking~$a_0:=b\delta$ and recalling that~$\omega$
is an even function.

Moreover, we claim that
\begin{equation}\label{CA4LC}
|\log(1+r)|\le 4|r|\qquad
{\mbox{for every }}r\in\left[-\frac12,\frac12\right].
\end{equation}
Indeed, suppose not, namely
$$ \min_{\left[-\frac12,\frac12\right]}\psi<0,$$
where~$\psi(r):=4|r|-|\log(1+r)|$. Let~$r_0$ be the point attaining the above minimum.
Thus, since~$\psi(0)=0$,
$\psi\left(-\frac12\right)=2-|\log\frac12|>0$
and
$\psi\left(\frac12\right)=2-\log\frac32>0$, necessarily~$\psi'(r_0)=0$ and $r_0\not=0$. This gives that
$$ 0=\psi'(r_0)=\begin{cases}
4-\frac1{1+r_0} &{\mbox{ if $r_0\in\left(0,\frac12\right)$,}}\\
-4+\frac1{1+r_0}&{\mbox{ if $r_0\in\left(-\frac12,0\right)$.}}
\end{cases}$$
The above cases readily produce a contradiction, hence~\eqref{CA4LC}
is proved.

Using together~\eqref{3.5} and~\eqref{CA4LC} with~$r:=\frac{\sqrt{(1-\alpha)\rho}\,\omega(\xi)}{\sqrt\kappa\,|\xi|^{\alpha}}-1$,
we obtain that
\begin{eqnarray*}
\left|\log\frac{\sqrt{(1-\alpha)\rho}\,\omega(\xi)}{\sqrt\kappa}-\alpha\log|\xi|\right|
=
\left|\log\frac{\sqrt{(1-\alpha)\rho}\,\omega(\xi)}{\sqrt\kappa\,|\xi|^{\alpha}}\right|
\le 4\left|\frac{\sqrt{(1-\alpha)\rho}\,\omega(\xi)}{\sqrt\kappa\,|\xi|^{\alpha}}-1\right|
\le 4\sqrt{C\,(1-\alpha)} 
\end{eqnarray*}
as long as~$\sqrt{C\,(1-\alpha)}\le\frac12$.
This establishes~\eqref{3.5BIS}, up to renaming~$C$.
\end{proof}

As it is well known, in wave propagation in dispersive medium, initiated by Lord Rayleigh (\cite{Br}), a crucial role is played by the notion of {\it group velocity} which is given by the derivative of the dispersion with respect to the frequency variable. Indeed, this remarkable role stays in the property that  the group velocity corresponds to the velocity of energy transport   (\cite{Bi})  in a large class of so called nondissipative media.
Therefore, 
 now we are going to  extend the asymptotics of
Theorem~\ref{ASYLOANDHI} to the derivatives of the dispersion,  to the aim of obtaining quantitative estimates on the behavior  of the group velocity.

\begin{theorem}\label{OMEPI-P}
For $\delta >0$ and $0\le\alpha <1$, 
we have that
\begin{eqnarray}
&&
\lim_{\xi\rightarrow 0^\pm} \omega'(\xi)=
\pm
\frac{\sqrt\kappa \delta^{1-\alpha}}{\sqrt{(1-\alpha)\rho } },
\label{first}
\\
&&
\lim_{\xi\rightarrow \pm\infty}
|\xi|^{1-\alpha}\omega'(\xi)=
\pm2\alpha\,\sqrt{\frac{\kappa }{\rho}\,
\int_{0}^{+\infty}
\frac{1-\cos\tau}{ \tau^{1+2\alpha}}d\tau
},\label{second}\\
&&\lim_{\xi\to0^\pm}|\xi|^{-1}
\omega''(\xi)=-\frac{\sqrt{\kappa (1-\alpha)}\;\delta^{3-\alpha} }{4(2-\alpha)\sqrt\rho},
\label{third}\\
&&\lim_{\xi\to0^\pm}
\omega''(\xi)=0,\label{fourth}\\
&&{\mbox{ if }}\alpha\in\left(\frac12,1\right){\mbox{ then }}\quad
\lim_{\xi\to\pm\infty}
|\xi|^{2-\alpha} \omega''(\xi)=-
\frac{2\alpha(1-\alpha)\sqrt\kappa}{\sqrt\rho}\left(\int_{0}^{+\infty}
\frac{1-\cos \tau}{\tau^{1+2\alpha}}\,d\tau\right)^{1/2},\label{fifth}\\
&&{\mbox{ if }}\alpha\in\left[0,\frac12\right){\mbox{ then }}\quad
\liminf_{\xi\to\pm\infty}
|\xi|^{1+\alpha} \omega''(\xi)=-
\frac{\sqrt\kappa \delta^{1-2\alpha}}{\sqrt\rho }
\left(\int_{0}^{+\infty}
\frac{1-\cos \tau}{\tau^{1+2\alpha}}d\tau\right)^{-1/2}\nonumber
\\&&\qquad\qquad
\qquad\qquad\qquad\qquad
<\frac{\sqrt\kappa \delta^{1-2\alpha}}{\sqrt\rho }
\left(\int_{0}^{+\infty}
\frac{1-\cos \tau}{\tau^{1+2\alpha}}d\tau\right)^{-1/2}\nonumber
\\&&\qquad
\qquad\qquad
\qquad\qquad\qquad=\limsup_{\xi\to\pm\infty}
|\xi|^{1+\alpha} \omega''(\xi),\label{sixth}\\&&
{\mbox{ if }}\alpha= \frac12{\mbox{ then }}\quad
\liminf_{\xi\to\pm\infty}
|\xi|^{3/2} \omega''(\xi)\nonumber\\
&&\qquad\qquad\qquad
\qquad=
\left[-\frac{\sqrt\kappa}{\sqrt\rho}
\left({ \int_{0}^{+\infty}
\frac{1-\cos \tau}{\tau^{2}}d\tau}\right)^{-1/2}
-\frac12\left(\frac{ \kappa}{ \rho}{ \int_{0}^{+\infty}
\frac{1-\cos \tau}{\tau^{2}}d\tau}\right)^{1/2}
\right]\nonumber \\&&\qquad
\qquad\qquad
\qquad<\left[\frac{\sqrt \kappa}{ \sqrt\rho}
\left({ \int_{0}^{+\infty}
\frac{1-\cos \tau}{\tau^{2}}d\tau}\right)^{-1/2}
-\frac12\left(\frac{ \kappa}{ \rho}{ \int_{0}^{+\infty}
\frac{1-\cos \tau}{\tau^{2}}d\tau}\right)^{1/2}
\right]\nonumber \\&&\qquad\qquad\qquad
\qquad=
\limsup_{\xi\to\pm\infty}
|\xi|^{3/2} \omega''(\xi)\label{seventh}
.\end{eqnarray}
\end{theorem}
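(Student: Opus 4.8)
The plan is to reduce everything to the single auxiliary function
$$ H(\xi):=\int_0^{|\xi|\delta}\frac{1-\cos\tau}{\tau^{1+2\alpha}}\,d\tau,
\qquad\text{so that}\qquad
\omega(\xi)=\sqrt{\tfrac{4\kappa}{\rho}}\;|\xi|^{\alpha}\,H(\xi)^{1/2}$$
by the substitution~$\tau=|\xi|\delta z$ already used in the proof of Theorem~\ref{ASYLOANDHI}. One then differentiates this representation: writing~$g(\xi):=|\xi|^{2\alpha}H(\xi)=\tfrac{\rho}{4\kappa}\omega^2(\xi)$, we have~$\omega=\sqrt{4\kappa/\rho}\,\sqrt{g}$, hence
$$ \omega'=\sqrt{\tfrac{\kappa}{\rho}}\,\frac{g'}{\sqrt g},\qquad
\omega''=\sqrt{\tfrac{\kappa}{\rho}}\left(\frac{g''}{\sqrt g}-\frac{(g')^2}{2g^{3/2}}\right).$$
Since~$\tfrac{d}{d\xi}H(\xi)=\operatorname{sign}(\xi)\,\delta\,\frac{1-\cos(|\xi|\delta)}{(|\xi|\delta)^{1+2\alpha}}=\operatorname{sign}(\xi)\,\delta^{-2\alpha}\,\frac{1-\cos(\xi\delta)}{|\xi|^{1+2\alpha}}$ by the fundamental theorem of calculus, every derivative of~$g$ is expressed in closed form through~$H(\xi)$ (an integral quantity whose asymptotics are controlled exactly as in Theorems~\ref{ASYLOANDHI} and~\ref{OMEPI-P-LOGA}) together with elementary functions of the endpoint~$|\xi|\delta$. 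Concretely, $g'(\xi)=\operatorname{sign}(\xi)\big(2\alpha|\xi|^{2\alpha-1}H(\xi)+\delta^{1-2\alpha}|\xi|^{2\alpha-1}(1-\cos(\xi\delta))/(|\xi|\delta)^{2\alpha}\big)$, and one more differentiation gives~$g''$.

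For the \textbf{limits at~$0$}, I expand~$H(\xi)=\int_0^{|\xi|\delta}\big(\tfrac12-\tfrac{\tau^2}{24}+\cdots\big)\tau^{1-2\alpha}\,d\tau=\tfrac{(|\xi|\delta)^{2-2\alpha}}{4(1-\alpha)}-\tfrac{(|\xi|\delta)^{4-2\alpha}}{24(4-2\alpha)}+\cdots$, which yields~$g(\xi)=\tfrac{\delta^{2-2\alpha}}{4(1-\alpha)}\xi^2\big(1+O(\xi^2)\big)$ — this is the expansion~\eqref{ENCHA1} cited in the footnote. Plugging into the formulas for~$\omega'$ and~$\omega''$ and keeping the first two terms gives~\eqref{first}, \eqref{third}, \eqref{fourth} directly; the sign~$\pm$ comes from~$\operatorname{sign}(\xi)$, and the only care needed is to carry the quadratic correction term since~$\omega''(0)=0$ but~$|\xi|^{-1}\omega''(\xi)$ has a nonzero limit. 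These are routine Taylor computations.

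For the \textbf{limits at~$\pm\infty$}, I use that~$H(\xi)\to I_\alpha:=\int_0^\infty\frac{1-\cos\tau}{\tau^{1+2\alpha}}\,d\tau$ (finite for~$\alpha\in(0,1)$, and for~$\alpha=0$ the relevant combination still converges) and that the ``boundary'' term~$\tfrac{1-\cos(\xi\delta)}{(|\xi|\delta)^{2\alpha}}$ decays like~$|\xi|^{-2\alpha}$ but oscillates. In~$g'$ the leading behaviour is~$2\alpha|\xi|^{2\alpha-1}I_\alpha$, giving~\eqref{second} after dividing by~$\sqrt g\sim\sqrt{4\kappa I_\alpha/\rho}\,|\xi|^\alpha\cdot\sqrt{\rho/4\kappa}$. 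The delicate point — and the \emph{main obstacle} — is~$\omega''$ at infinity, because there the smooth term of~$g''$ (of order~$|\xi|^{2\alpha-2}$) and the oscillatory boundary term (of order~$|\xi|^{2\alpha-2}\cdot(1-\cos(\xi\delta))$ or~$|\xi|^{2\alpha-1}\cdot\sin(\xi\delta)\cdot$decay) can be of \emph{comparable} order depending on whether~$2\alpha-2$ beats the extra~$|\xi|^{-1}$ from differentiating~$|\xi|^{2\alpha-1}$ versus~$|\xi|^{2\alpha}\cos(\xi\delta)$ — this is exactly the trichotomy~$\alpha>\tfrac12$, $\alpha<\tfrac12$, $\alpha=\tfrac12$. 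When~$\alpha>\tfrac12$ the nonoscillatory term~$-2\alpha(1-\alpha)|\xi|^{2\alpha-2}I_\alpha$ dominates and one gets the clean limit~\eqref{fifth}. When~$\alpha<\tfrac12$ the oscillatory term~$\sim\operatorname{sign}(\xi)\,\delta^{1-2\alpha}|\xi|^{-1-\alpha}\cdot(\text{bounded oscillation})$ — after dividing by~$\sqrt g$ — dominates, so only~$\liminf/\limsup$ survive; one must identify the oscillation amplitude precisely, i.e.\ show the relevant bounded factor is~$\tfrac{1-\cos(\xi\delta)}{\cdots}$ whose range closure is~$[0,2]$ scaled appropriately, producing the symmetric bounds in~\eqref{sixth} (note the stated~$\liminf$ and~$\limsup$ are negatives of each other, consistent with a mean-zero-ish oscillation around~$0$... actually around the value forced by the amplitude). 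At the threshold~$\alpha=\tfrac12$ both contributions are order~$|\xi|^{-3/2}$ and must be added: the nonoscillatory part contributes the~$-\tfrac12(\kappa I_{1/2}/\rho)^{1/2}$ piece and the oscillatory part the~$\pm(\kappa/\rho)^{1/2}I_{1/2}^{-1/2}$ piece, giving~\eqref{seventh}. The whole step is bookkeeping of three terms of different homogeneity with one oscillatory factor; I would organize it by writing~$|\xi|^{2-\alpha}\omega''(\xi)$ (resp.\ $|\xi|^{1+\alpha}$, $|\xi|^{3/2}$) explicitly via the formulas above, substituting the asymptotics~$H(\xi)=I_\alpha+O(|\xi|^{-2\alpha})$, and reading off which terms vanish, which converge, and which oscillate boundedly, extracting~$\liminf$/$\limsup$ from the density of~$\{|\xi|\delta\bmod 2\pi\}$.
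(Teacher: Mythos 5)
Your overall strategy coincides with the paper's: for the asymptotics at infinity the paper likewise writes $\omega(\xi)=\xi^{\alpha}\bigl(\frac{4\kappa}{\rho}\int_0^{\xi\delta}\frac{1-\cos\tau}{\tau^{1+2\alpha}}\,d\tau\bigr)^{1/2}$ and differentiates this representation twice (see \eqref{FYTPs3K} and \eqref{FYTPs3K-2}), and at the origin it performs the same Taylor expansions you describe, so \eqref{first}--\eqref{fourth}, \eqref{second} and \eqref{fifth} are indeed ``routine'' along your route. The genuine problem sits exactly at the crux of \eqref{sixth} and \eqref{seventh}: you identify the dominant bounded oscillation in $\omega''$ at infinity as $1-\cos(\xi\delta)$, ``whose range closure is $[0,2]$''. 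That is not the right factor. In $g''$ the term of order $|\xi|^{-\alpha-1}$ (after dividing by $\sqrt g\sim\sqrt{I_\alpha}\,|\xi|^{\alpha}$) comes from differentiating the boundary term $\mathrm{sign}(\xi)\,\delta^{-2\alpha}(1-\cos(\xi\delta))/|\xi|$ of $g'$ once more, which produces $\delta^{1-2\alpha}\sin(\xi\delta)/|\xi|$; every contribution still containing $1-\cos(\xi\delta)$, including the cross term coming from $(g')^2/(2g^{3/2})$, is $O(|\xi|^{-\alpha-2})$ and hence negligible. So the surviving oscillatory factor is $\sin(\xi\delta)\in[-1,1]$, symmetric about zero, and this is precisely what makes the $\liminf$ and $\limsup$ in \eqref{sixth} negatives of each other, and what makes them symmetric about the smooth contribution $-\frac12\bigl(\frac{\kappa}{\rho}\int_0^{+\infty}\frac{1-\cos\tau}{\tau^{2}}\,d\tau\bigr)^{1/2}$ when $\alpha=\frac12$ in \eqref{seventh}. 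With a $[0,2]$-valued factor, as you propose, the extreme values would instead be ``smooth part'' and ``smooth part plus twice the amplitude'', contradicting the stated formulas; your own parenthetical remark that the stated $\liminf$ and $\limsup$ are negatives of each other shows you noticed the tension but did not resolve it. Once the $\sin(\xi\delta)$ term is correctly isolated (this is the paper's display \eqref{UnIONSistdcY}), the rest of your plan goes through, and the density argument you invoke is only needed in the trivial form that $\sin(\xi\delta)$ gets arbitrarily close to $\pm1$ along sequences $\xi\to\pm\infty$.

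A secondary, fixable slip: your displayed formula for $g'$ carries a spurious factor $\delta^{1-2\alpha}$. Since $|\xi|^{2\alpha}H'(\xi)=\mathrm{sign}(\xi)\,\delta^{-2\alpha}(1-\cos(\xi\delta))/|\xi|$, the boundary term of $g'$ is $\mathrm{sign}(\xi)\,|\xi|^{2\alpha-1}(1-\cos(\xi\delta))/(|\xi|\delta)^{2\alpha}$, with no extra $\delta^{1-2\alpha}$ in front. This does not alter the structure of the argument, but it must be corrected to land on the exact constants appearing in \eqref{second}, \eqref{sixth} and \eqref{seventh} (the $\delta^{1-2\alpha}$ appearing there arises only after the extra $\delta$ produced by differentiating $1-\cos(\xi\delta)$). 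For completeness, note the paper also gives a second, independent derivation of \eqref{fifth}--\eqref{seventh} via the $\liminf/\limsup$ of $\int_0^{R}\tau^{1-2\alpha}\cos\tau\,d\tau$, which you do not need but which provides a useful cross-check of the constants.
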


We stress that Theorem~\ref{OMEPI-P} highlights a number of special features of the dispersion relation.
Indeed, it follows from the asymptotics in~\eqref{first} and~\eqref{fourth} of
Theorem~\ref{OMEPI-P}
that $\omega'$ has a jump discontinuity at the origin (hence~$\omega$ presents a corner), but~$\omega''$ (as a function defined in~$\R\setminus\{0\}$)
can be extended continuously through the origin.

\begin{figure}[ht!]
\centering
\includegraphics[scale=0.9]{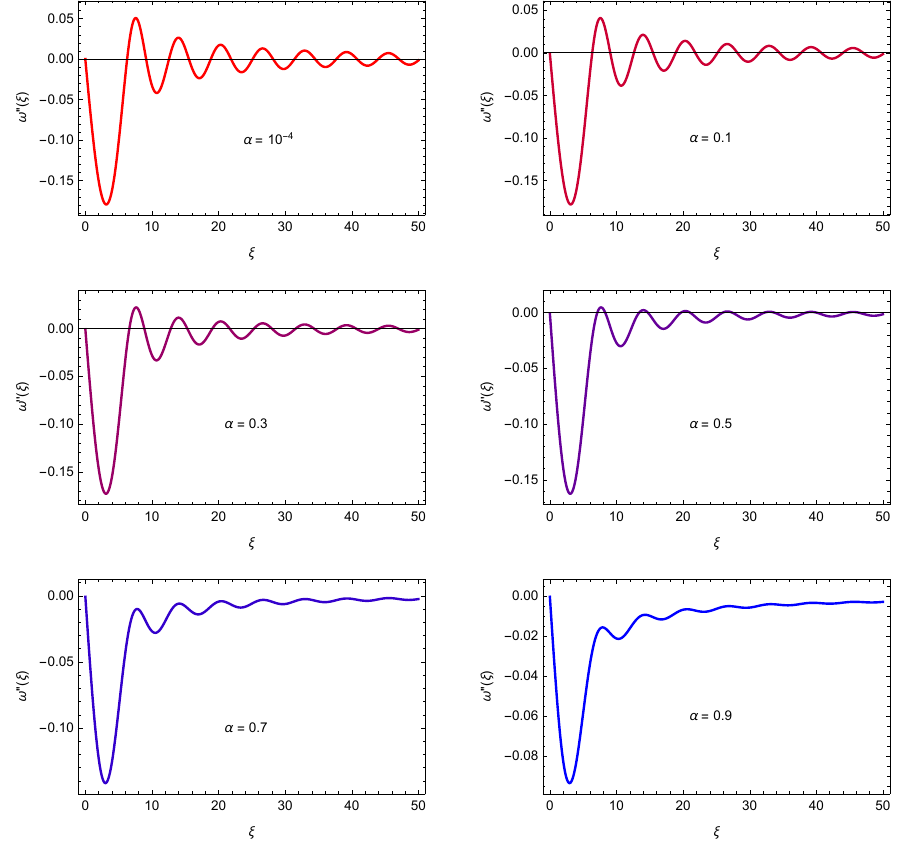}
\caption{\sl \footnotesize Numerical plot for~$\omega''$ when $\kappa=1/2$ and $\rho=\delta=1$.}\label{67:98xs89-om2}
\end{figure}

Moreover, the convexity properties of the dispersion relation
present an interesting dependence on~$\alpha$. Specifically,
when~$\alpha\in\left(\frac12,1\right)$, formula~\eqref{fifth} in
Theorem~\ref{OMEPI-P}
gives that~$\omega''$ is negative at infinity and therefore~$\omega$ is concave at infinity.
Instead, when~$\alpha\in\left(0,\frac12\right]$,
the asymptotics in formulas~\eqref{sixth} and~\eqref{seventh} of
Theorem~\ref{OMEPI-P} state that~$\omega''$ changes
sign infinitely many times at infinity and consequently
in this range the dispersion relation~$\omega$ switches from convex to concave infinitely often. Besides detailed
analytic proofs, we also provide
numerical confirmations of these phenomena.
In particular, the function~$\omega''$ is plotted in Figure~\ref{67:98xs89-om2}:
notice that~$\omega''$ is shown to intersect the horizontal axis infinitely many times when~$\alpha\in\left(0,\frac12\right]$
in agreement with~\eqref{sixth} and~\eqref{seventh} (and differently from the case~$\alpha\in\left(\frac12,1\right)$
which instead is in agreement with~\eqref{fifth}).

\begin{figure}[ht!]
\centering
\includegraphics[scale=0.9]{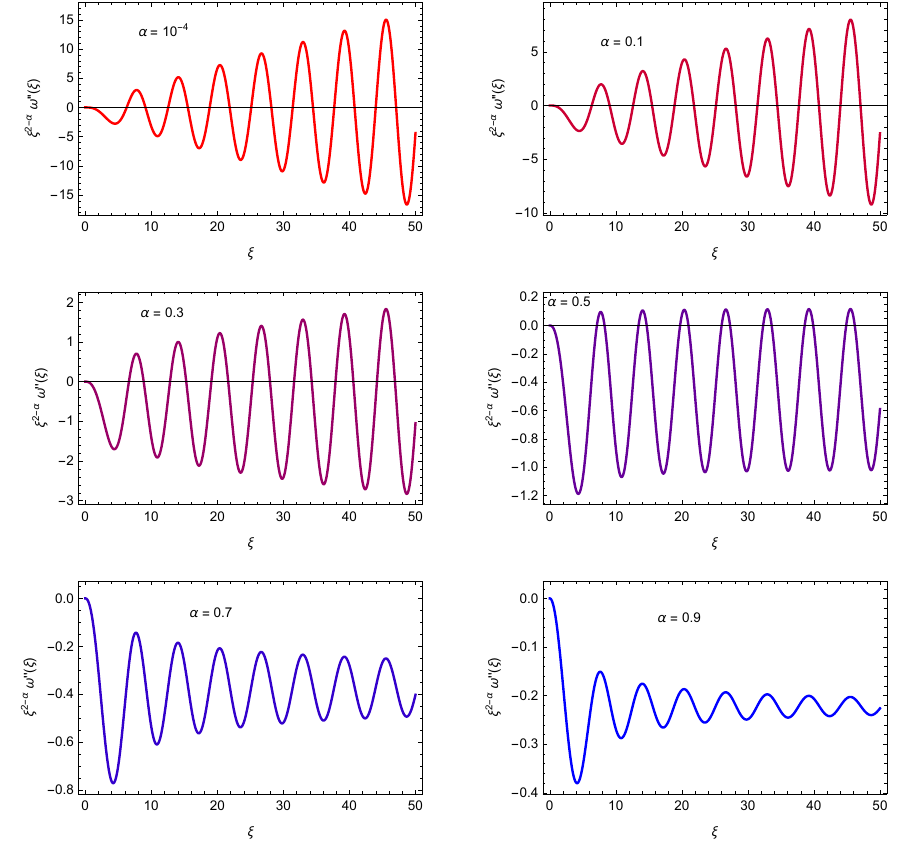}
\caption{\sl \footnotesize Numerical plot for~$|\xi|^{2-\alpha}\omega''$ when $\kappa=1/2$ and $\rho=\delta=1$.}\label{67:98xs89-om2.2}
\end{figure}

An additional interesting feature showcased by Theorem~\ref{OMEPI-P}
is that the derivatives of the the dispersion relation do not inherit the ``natural decay at infinity''
from the original function. In particular, while~$\omega$ at infinity behaves like~$|\xi|^\alpha$
in light of~\eqref{IMP}, contrary to the usual situations it is not always true in this setting that~$\omega''$ behaves at infinity
like~$|\xi|^{\alpha-2}$ (that is, like~$\frac{\omega}{\xi^2}$). More precisely,
while this is true when~$\alpha\in\left(\frac12,1\right)$,
thanks to formula~\eqref{fifth} in
Theorem~\ref{OMEPI-P}, in the range~$\alpha\in\left(0,\frac12\right]$ the behavior is completely different
and the leading order happens to be~$|\xi|^{1+\alpha}$ (surprisingly corresponding to~$\frac{\omega}{\xi^{-1}}$,
and also presenting oscillatory behaviors).

The different power law behaviors of~$\omega''$ at infinity stated in~\eqref{fifth}, \eqref{sixth} and~\eqref{seventh}
are also numerically confirmed by Figures~\ref{67:98xs89-om2.2} and~\ref{67:98xs89-om2.3}.
In particular, Figure~\ref{67:98xs89-om2.2} showcases a numerical plot of~$|\xi|^{2-\alpha}\omega''$
that confirms the convergence at infinity if~$\alpha\in\left(\frac12,1\right)$,
in agreement with~\eqref{fifth}, its divergence if~$\alpha\in\left(0,\frac12\right)$,
in agreement with~\eqref{sixth}, its oscillatory boundedness~$\alpha=\frac12$,
in agreement with~\eqref{seventh}. Instead,
Figure~\ref{67:98xs89-om2.3} showcases a numerical plot of~$|\xi|^{1+\alpha}\omega''$
that confirms the divergence at infinity if~$\alpha\in\left(\frac12,1\right)$,
in agreement with~\eqref{fifth}, and its bounded oscillatory behavior
if~$\alpha\in\left(0,\frac12\right]$,
in agreement with~\eqref{sixth} and~\eqref{seventh}.
We also notice that when~$\alpha=\frac12$ the plots in Figures~\ref{67:98xs89-om2.2}
and~\ref{67:98xs89-om2.3} agree,  consistently with the fact that~$2-\alpha=1+\alpha$ in this specific case.

\begin{figure}[ht!]
\centering
\includegraphics[scale=0.9]{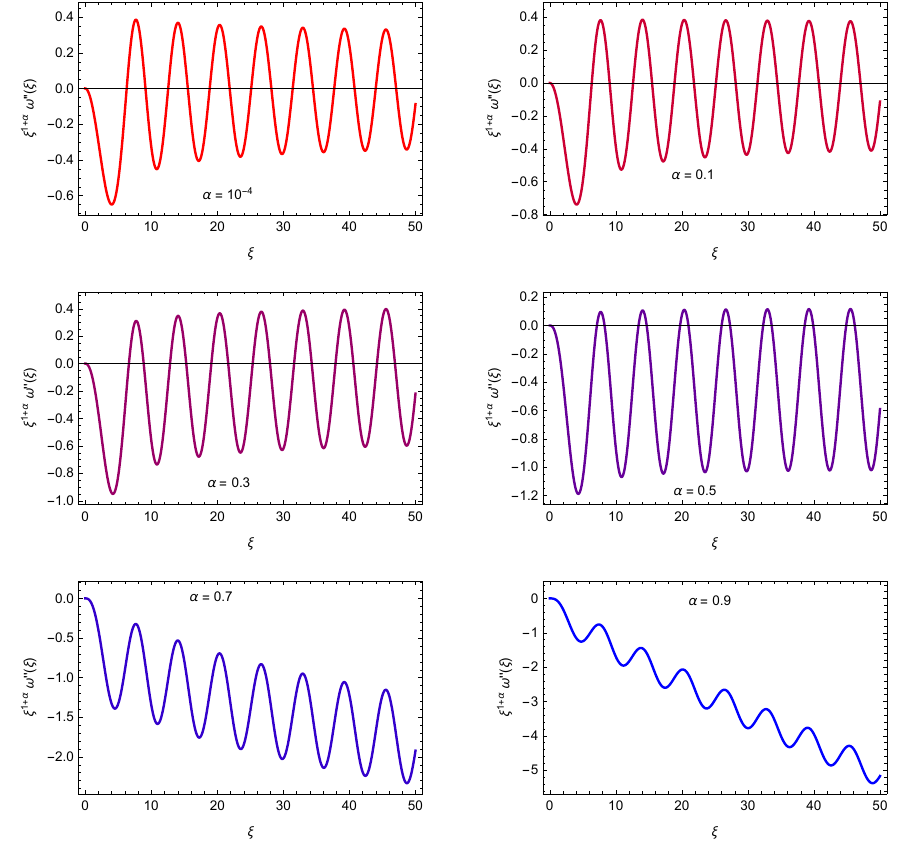}
\caption{\sl \footnotesize Numerical plot for~$|\xi|^{1+\alpha}\omega''$ when   $\kappa=1/2$ and $\rho=\delta=1$.}\label{67:98xs89-om2.3}
\end{figure}

The unusual phenomena detected
in~\eqref{fifth},
\eqref{sixth} and~\eqref{seventh}
are deeply related to the nonlocal nature of the problem
and to the appearance of divergent singular integrals in the formal expansions of the dispersion relation.
This important technical details prevent us to use lightly formal expansions
and soft arguments of general flavor since, roughly speaking, terms that are usually ``negligible''
in a standard expansion may become ``dominant'' in our setting since they may end
up being multiplied
by a ``divergent'' coefficient induced by a singular integral  and, quite interestingly,
as emphasized
by the asymptotics in formulas~\eqref{sixth} and~\eqref{seventh}
of Theorem~\ref{OMEPI-P},
these new significant terms
may even be of oscillatory type.

The ``numerology'' of Theorem~\ref{OMEPI-P} is also somewhat
interesting since all the coefficients appearing in the asymptotics
are determined explicitly (and finding an explicit representation
of a coefficient is often the most direct way to prove that
it is finite as well). As a matter of fact, though we do not make
use of this fact, we mention that
the trigonometric integral appearing in
Theorem~\ref{OMEPI-P} (as well as in~\eqref{IMP})
can be computed in terms
of the Euler Gamma Function, since\footnote{Formula~\eqref{EULEF}
is especially useful to take limits and exact asymptotics in~$\alpha$, since it
reduces the singularities in the limits as~$\alpha\to\{0,1\}$ of the integral
on the left hand side to the well known simple poles of the Euler Gamma Function.}
\begin{equation}\label{EULEF}
\int_{0}^{+\infty}
\frac{1-\cos \tau}{\tau^{1+2\alpha}}d\tau
=-\cos(\pi\alpha)\Gamma(-2\alpha),
\end{equation}
with the right hand side
continuously extended to the value~$\frac\pi2$ when~$\alpha=\frac12$,
see Appendix~\ref{EULE}
for an elementary
(or Appendix~\ref{EULE2} for a shorter,
but more sophisticated) proof of~\eqref{EULEF}.
See also Figure~\ref{67:98xs89} for a numerical confirmation of~\eqref{EULEF}:
indeed, in Figure~\ref{67:98xs89} the plots of the functions~$\alpha\mapsto
\int_{0}^{+\infty}
\frac{1-\cos \tau}{\tau^{1+2\alpha}}d\tau$ and~$\alpha\mapsto-\cos(\pi\alpha)\Gamma(-2\alpha)$
are given and one can observe that the two graphs are the same, up to a sign change,
in full agreement with~\eqref{EULEF}.

\begin{figure}[ht!]
\centering
\includegraphics[scale=1]{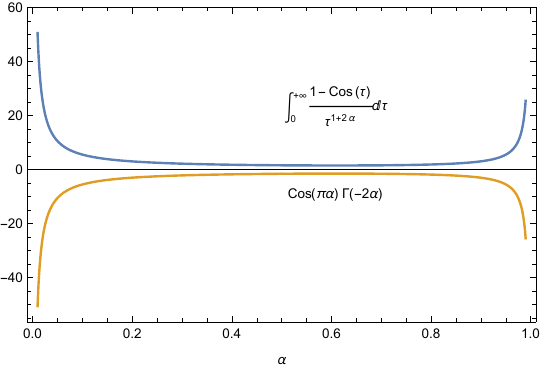}
\caption{\sl \footnotesize Numerical evidence for~\eqref{EULEF}. 
}\label{67:98xs89}
\end{figure}

It is also interesting to recall that the threshold~$\alpha=\frac12$
that emerges in
formulas~\eqref{fifth}, \eqref{sixth} and~\eqref{seventh} of
Theorem~\ref{OMEPI-P} is also an important threshold
for several other nonlocal problems, see
e.g.~\cite{MR2564467, MR2948285, 2021arXiv210102315D}.

\begin{proof}[Proof of Theorem~\ref{OMEPI-P}] Differentiating~\eqref{eq:disp_rel}
and using the change of variable~$w:=\xi\delta z$
we see that
\begin{equation}
\begin{split}
&
2\omega(\xi)\omega'(\xi)=\frac{d}{d\xi}
\omega^2(\xi)=\frac{d}{d\xi}\left(
\frac{2\kappa}{\rho\,\delta^{2\alpha}}\,\int_{-1}^{1}\frac{1-\cos (\xi\delta z)}{|z|^{1+2\alpha}}dz\right)=
\frac{2\kappa}{\rho\,\delta^{2\alpha-1}}\,\int_{-1}^{1}
\frac{z\sin (\xi\delta z)}{|z|^{1+2\alpha}}dz\\&\qquad\qquad\qquad=
\frac{4\kappa}{\rho\,\delta^{2\alpha-1}}\,\int_{0}^{1}
\frac{z\sin (\xi\delta z)}{|z|^{1+2\alpha}}dz=
\frac{4\kappa |\xi|^{2\alpha-1}}{\rho}\,\int_{0}^{\xi\delta}
\frac{w\sin w}{ |w|^{1+2\alpha}}dw\\&\qquad\qquad\qquad=
\frac{4\kappa |\xi|^{2\alpha-1}}{\rho }\,\int_{0}^{\xi\delta}
\frac{\sin |w|}{ |w|^{2\alpha}}dw\,.\end{split}\label{DERIP}
\end{equation}
{F}rom this and~\eqref{LS:XD}, using l'H\^{o}pital's Rule we find that
\begin{eqnarray*}&&
\frac{2\sqrt\kappa\,\delta^{1-\alpha}}{\sqrt{(1-\alpha)\,\rho}}
\lim_{\xi\rightarrow 0^\pm} \omega'(\xi)=
\lim_{\xi\rightarrow 0^\pm}\frac{2\omega(\xi)\omega'(\xi)}{|\xi|}=
\lim_{\xi\rightarrow 0^\pm}
\frac{4\kappa }{\rho |\xi|^{2-2\alpha}}\,\int_{0}^{\xi\delta}
\frac{\sin |w|}{ |w|^{ 2\alpha}}dw\\&&\qquad=
\lim_{\xi\rightarrow 0^\pm}
\frac{4\kappa \delta}{(2-2\alpha)\rho |\xi|^{-2\alpha}\xi }\,
\frac{ \sin |\xi\delta|}{ |\xi\delta|^{2\alpha}}=
\lim_{\xi\rightarrow 0^\pm}
\frac{2\kappa \delta^{1-2\alpha}}{(1-\alpha)\rho  }\,
\frac{\sin |\xi\delta|}{ \xi }\\&&\qquad=\pm
\frac{2\kappa \delta^{2-2\alpha}}{(1-\alpha)\rho  }.
\end{eqnarray*}
{F}rom this, we obtain formula~\eqref{first} in Theorem~\ref{OMEPI-P}.

Similarly, using~\eqref{IMP} and~\eqref{DERIP},
\begin{eqnarray*}&&
\frac{4\sqrt\kappa
}{\sqrt\rho}\,\sqrt{\int_{0}^{+\infty}
\frac{1-\cos \tau}{\tau^{1+2\alpha}}d\tau}
\lim_{\xi\rightarrow \pm\infty}
|\xi|^{1-\alpha}\omega'(\xi)
=
\lim_{\xi\rightarrow \pm\infty}
2|\xi|^{1-2\alpha}\omega(\xi)\omega'(\xi)\\&&\qquad=\lim_{\xi\rightarrow \pm\infty}
\frac{4\kappa }{\rho }\,\int_{0}^{\xi\delta}
\frac{w\sin w}{ |w|^{1+2\alpha}}dw=
\frac{4\kappa }{\rho}\,\int_{0}^{\pm\infty}
\frac{\sin| w|}{ |w|^{2\alpha}}dw=\pm\frac{4\kappa }{\rho}\,\int_{0}^{+\infty}
\frac{\sin\tau}{ \tau^{2\alpha}}d\tau.
\end{eqnarray*}
We stress that when~$\alpha\in\left(0,\frac12\right]$
the function~$\frac{\sin\tau}{ \tau^{2\alpha}}$ is not
Lebesgue summable in~$(0,+\infty)$; nevertheless, for
every~$\alpha\in(0,1)$ one can write the improper Riemann integral
\begin{equation}\label{RIEMANN}\begin{split}&\int_{0}^{+\infty}
\frac{\sin\tau}{ \tau^{2\alpha}}d\tau:=\lim_{R\to+\infty}
\int_{0}^{R}
\frac{\sin\tau}{ \tau^{2\alpha}}d\tau
=\lim_{R\to+\infty}
\int_{0}^{R}\left[
\left(\frac{1-\cos\tau}{ \tau^{2\alpha}}\right)'+
\frac{2\alpha(1-\cos\tau)}{ \tau^{1+2\alpha}}
\right]d\tau\\&\qquad=\lim_{R\to+\infty}
\frac{1-\cos R}{R^{2\alpha}}
-\lim_{t\to0}
\frac{1-\cos t}{t^{2\alpha}}+2\alpha
\int_{0}^{+\infty}
\frac{1-\cos\tau}{ \tau^{1+2\alpha}}d\tau\\&\qquad=0
-\lim_{t\to0}
t^{2-2\alpha}\,\frac{1-\cos t}{t^{2}}+2\alpha
\int_{0}^{+\infty}
\frac{1-\cos\tau}{ \tau^{1+2\alpha}}d\tau=2\alpha
\int_{0}^{+\infty}
\frac{1-\cos\tau}{ \tau^{1+2\alpha}}d\tau.
\end{split}\end{equation}
These observations lead to
$$ \frac{4\sqrt\kappa
}{\sqrt\rho}\,\sqrt{\int_{0}^{+\infty}
\frac{1-\cos \tau}{\tau^{1+2\alpha}}d\tau}
\lim_{\xi\rightarrow \pm\infty}
|\xi|^{1-\alpha}\omega'(\xi)=
\pm\frac{8\alpha\kappa }{\rho}\,\int_{0}^{+\infty}
\frac{1-\cos \tau}{\tau^{1+2\alpha}}d\tau,
$$
which produces formula~\eqref{second} in Theorem~\ref{OMEPI-P}.

Also, differentiating once more in~\eqref{DERIP}, for all~$\xi\ne0$,
\begin{equation*}
\begin{split}
&
\omega(\xi)\omega''(\xi)+(\omega'(\xi))^2=
\frac{d}{d\xi}\big(\omega(\xi)\omega'(\xi)\big)=
\frac{d}{d\xi}\left(
\frac{2\kappa}{\rho\,\delta^{2\alpha-1}}\,\int_{0}^{1}
\frac{\sin (\xi\delta z)}{z^{2\alpha}}dz\right)\\&\qquad=
\frac{2\kappa \delta^{2-2\alpha}}{\rho}\,\int_{0}^{1}
\frac{\cos (\xi\delta z)}{z^{2\alpha-1}}dz
\,.\end{split}
\end{equation*}
Thus, using again~\eqref{DERIP},
\begin{equation}\label{TMOSTP}
\begin{split}
\omega(\xi)\omega''(\xi)\,&=\,
\frac{2\kappa \delta^{2-2\alpha}}{\rho}\,\int_{0}^{1}
\frac{\cos (\xi\delta z)}{z^{2\alpha-1}}dz
-(\omega'(\xi))^2\\
&=\,
\frac{2\kappa \delta^{2-2\alpha}}{\rho}\,\int_{0}^{1}
\frac{\cos (\xi\delta z)}{z^{2\alpha-1}}dz-
\left(\frac{2\kappa}{\rho\,\delta^{2\alpha-1} \omega(\xi)}\,\int_{0}^{1}
\frac{\sin (\xi\delta z)}{z^{2\alpha}}dz\right)^2.
\end{split}\end{equation}
Also, in light of~\eqref{eq:disp_rel}, as~$\xi\to0^+$,
\begin{equation}\label{ENCHA1}
\begin{split}&
\omega^2(\xi)=
\frac{2\kappa}{\rho\,\delta^{2\alpha}}\,\int_{-1}^{1}\frac{1-\cos (\xi\delta z)}{|z|^{1+2\alpha}}dz
=
\frac{4\kappa}{\rho\,\delta^{2\alpha}}\,\int_{0}^1\frac{\frac{(\xi\delta z)^2}{2}-\frac{(\xi\delta z)^4}{24}+O(\xi^6z^6)}{z^{1+2\alpha}}dz\\
&\qquad=
\frac{4\kappa}{\rho\,\delta^{2\alpha}}\left(
\frac{\xi^2\delta^2}{4(1-\alpha)}-\frac{\xi^4\delta^4}{48(2-\alpha)}+O(\xi^6)
\right)=
\frac{\kappa \xi^2\delta^{2(1-\alpha)}}{(1-\alpha)\rho}-
\frac{\kappa\xi^4\delta^{2(2-\alpha)}}{12(2-\alpha)\rho}+O(\xi^6)\\&\qquad=
\frac{\kappa \xi^2\delta^{2(1-\alpha)}}{(1-\alpha)\rho}\left(
1-
\frac{(1-\alpha)\xi^2\delta^{2}}{12(2-\alpha)}+O(\xi^4)\right),
\end{split}\end{equation}
which can be seen as an enhanced version of~\eqref{LS:XD}.

As a result,
\begin{eqnarray*}
\left(\frac{2\kappa}{\rho\,\delta^{2\alpha-1} \omega(\xi)}\right)^2=
\frac{4(1-\alpha)\kappa}{ \xi^2\delta^{2 \alpha}\rho \left(
1-
\frac{(1-\alpha)\xi^2\delta^{2}}{12(2-\alpha)}+O(\xi^4)\right)}=
\frac{4(1-\alpha)\kappa}{ \xi^2\delta^{2 \alpha}\rho}
\left(
1+\frac{(1-\alpha)\xi^2\delta^{2}}{12(2-\alpha)}+O(\xi^4)\right).
\end{eqnarray*}
Hence, by~\eqref{TMOSTP}, as~$\xi\to0^+$,
\begin{eqnarray*}&&
\frac{\sqrt\kappa \xi \delta^{1-\alpha}}{\sqrt{(1-\alpha)\rho}}\sqrt{
1+O(\xi^2) }\;
\omega''(\xi)\\&=&
\frac{2\kappa \delta^{2-2\alpha}}{\rho}\,\int_{0}^{1}
\frac{\cos (\xi\delta z)}{z^{2\alpha-1}}dz-
\frac{4(1-\alpha)\kappa}{ \xi^2\delta^{2 \alpha}\rho}
\left(
1+\frac{(1-\alpha)\xi^2\delta^{2}}{12(2-\alpha)}+O(\xi^4)\right)
\left(\int_{0}^{1}
\frac{\sin (\xi\delta z)}{z^{2\alpha}}dz\right)^2.
\end{eqnarray*}
Thus, noticing that, as~$\xi\to0^+$,
\begin{eqnarray*}
&& \int_{0}^{1}\frac{\cos (\xi\delta z)}{z^{2\alpha-1}}dz
=\int_{0}^{1}
\frac{1-\frac{(\xi\delta z)^2}2+O(\xi^4z^4)}{z^{2\alpha-1}}dz=\frac{1}{2(1-\alpha)}-\frac{\xi^2\delta^2}{4(2-\alpha)}+O(\xi^4)
\end{eqnarray*}
and
\begin{eqnarray*}
&&\int_{0}^{1}
\frac{\sin (\xi\delta z)}{z^{2\alpha}}dz
=\int_{0}^{1}
\frac{\xi\delta z-\frac{(\xi\delta z)^3}6+O(\xi^5 z^5)}{z^{2\alpha}}dz=\frac{\xi\delta}{2(1-\alpha)}-\frac{\xi^3\delta^3}{12(2-\alpha)}+O(\xi^5),
\end{eqnarray*}
we conclude that
\begin{eqnarray*}&&
\frac{\sqrt\kappa \xi \delta^{1-\alpha}}{\sqrt{(1-\alpha)\rho}}\sqrt{
1+O(\xi^2) }\;
\omega''(\xi)\\&=&
\frac{2\kappa \delta^{2-2\alpha}}{\rho}\left(\frac{1}{2(1-\alpha)}-\frac{\xi^2\delta^2}{4(2-\alpha)}+O(\xi^4)\right)\\&&\quad-
\frac{4(1-\alpha)\kappa\delta^{2-2 \alpha}}{ \rho}
\left(
1+\frac{(1-\alpha)\xi^2\delta^{2}}{12(2-\alpha)}+O(\xi^4)\right)
\left(\frac{1}{2(1-\alpha)}-\frac{\xi^2\delta^2}{12(2-\alpha)}+O(\xi^4)\right)^2\\&=&
\frac{2\kappa \delta^{2-2\alpha}}{\rho}\left(\frac{1}{2(1-\alpha)}-\frac{\xi^2\delta^2}{4(2-\alpha)}+O(\xi^4)\right)\\&&\quad-
\frac{4(1-\alpha)\kappa\delta^{2-2 \alpha}}{ \rho}
\left(
1+\frac{(1-\alpha)\xi^2\delta^{2}}{12(2-\alpha)}+O(\xi^4)\right)
\left(\frac{1}{4(1-\alpha)^2}-\frac{\xi^2\delta^2}{12(1-\alpha)(2-\alpha)}+O(\xi^4)\right)\\&=&
\frac{\kappa \delta^{2-2\alpha}}{\rho}\left(\frac{1}{ 1-\alpha}-\frac{\xi^2\delta^2}{2(2-\alpha)}+O(\xi^4)\right)-
\frac{\kappa\delta^{2-2 \alpha}}{ \rho}
\left(\frac{1}{ 1-\alpha }+\frac{\xi^2\delta^{2}}{12(2-\alpha)}-\frac{\xi^2\delta^2}{3(2-\alpha)}+O(\xi^4)\right)\\&=&
\frac{\kappa \delta^{2-2\alpha}}{\rho}\left( -\frac{\xi^2\delta^2}{2(2-\alpha)}-\frac{\xi^2\delta^{2}}{12(2-\alpha)}+\frac{\xi^2\delta^2}{3(2-\alpha)}+O(\xi^4)\right)\\&=&
-\frac{\kappa \delta^{4-2\alpha}\xi^2}{4(2-\alpha)\rho}+O(\xi^4).
\end{eqnarray*}
This entails that
\begin{equation*}
\frac{\sqrt\kappa \delta^{1-\alpha}}{\sqrt{(1-\alpha)\rho}}\,\lim_{\xi\to0^+}\xi^{-1}
\omega''(\xi)=-\frac{\kappa \delta^{4-2\alpha} }{4(2-\alpha)\rho}
\end{equation*}
and therefore
\begin{equation}\label{VERYD}
\lim_{\xi\to0^+}|\xi|^{-1}
\omega''(\xi)=-\frac{\sqrt{\kappa (1-\alpha)}\;\delta^{3-\alpha} }{4(2-\alpha)\sqrt\rho}.
\end{equation}
Since~$\omega$ (and thus~$\omega''$) is an even function, we also have that
\begin{equation*}
\lim_{\xi\to0^-}|\xi|^{-1}
\omega''(\xi)=-\frac{\sqrt{\kappa (1-\alpha)}\;\delta^{3-\alpha} }{4(2-\alpha)\sqrt\rho}.
\end{equation*}
This and~\eqref{VERYD} give formula~\eqref{third} in Theorem~\ref{OMEPI-P}
(from which formula~\eqref{fourth}
in Theorem~\ref{OMEPI-P} follows at once).

Let now~$\xi>0$. {F}rom~\eqref{eq:disp_rel},
\begin{equation*}
\omega(\xi)=\left(\frac{4\kappa}{\rho\,\delta^{2\alpha}}\,\int_{0}^{1}\frac{1-\cos (\xi\delta z)}{z^{1+2\alpha}}dz\right)^{1/2}
=\xi^\alpha\left(\frac{4\kappa}{\rho}\,\int_{0}^{\xi\delta}\frac{1-\cos \tau}{\tau^{1+2\alpha}}d\tau\right)^{1/2}
\end{equation*}
and therefore\footnote{We observe that an alternative
proof of formula~\eqref{second} in Theorem~\ref{OMEPI-P}
follows from~\eqref{FYTPs3K}, namely
\begin{eqnarray*}
\lim_{\xi\to+\infty}\xi^{1-\alpha}
\omega'(\xi)
&=&
\lim_{\xi\to+\infty}\left[
\alpha\left(\frac{4\kappa}{\rho}\,\int_{0}^{\xi\delta}\frac{1-\cos \tau}{\tau^{1+2\alpha}}d\tau\right)^{1/2}
+\frac{2\kappa\xi^{-2\alpha}(1-\cos(\xi\delta))}{\rho\delta^{2\alpha}}
\left(\frac{4\kappa}{\rho}\,\int_{0}^{\xi\delta}\frac{1-\cos \tau}{
\tau^{1+2\alpha}}d\tau\right)^{-1/2}\right]\\&=&
\alpha\left(\frac{4\kappa}{\rho}\,\int_{0}^{+\infty}\frac{1-\cos \tau}{\tau^{1+2\alpha}}d\tau\right)^{1/2}
+0,\end{eqnarray*}
and formula~\eqref{second} of Theorem~\ref{OMEPI-P} follows from this and
the fact that~$\omega'$ is odd.}
\begin{equation}\label{FYTPs3K}
\omega'(\xi)
=\alpha\xi^{\alpha-1}\left(\frac{4\kappa}{\rho}\,\int_{0}^{\xi\delta}\frac{1-\cos \tau}{\tau^{1+2\alpha}}d\tau\right)^{1/2}
+\frac{2\kappa\xi^{-\alpha-1}(1-\cos(\xi\delta))}{\rho\delta^{2\alpha}}\left(\frac{4\kappa}{\rho}\,\int_{0}^{\xi\delta}\frac{1-\cos \tau}{\tau^{1+2\alpha}}d\tau\right)^{-1/2}.\end{equation}
Taking
one more derivative,
\begin{equation}\label{FYTPs3K-2}\begin{split}
&\omega''(\xi)
\,=\,\alpha(\alpha-1)\xi^{\alpha-2}\left(\frac{4\kappa}{\rho}\,\int_{0}^{\xi\delta}\frac{1-\cos \tau}{\tau^{1+2\alpha}}d\tau\right)^{1/2}
\\&\qquad\qquad+\frac{2\alpha\kappa
\xi^{-\alpha-2}(1-\cos (\xi\delta))}{\rho\delta^{2\alpha}}\left(\frac{4\kappa}{\rho}\,\int_{0}^{\xi\delta}\frac{1-\cos \tau}{\tau^{1+2\alpha}}d\tau\right)^{-1/2}
\\&\qquad\qquad
-\frac{2(\alpha+1)\kappa\xi^{-\alpha-2}(1-\cos(\xi\delta))}{\rho\delta^{2\alpha}}\left(\frac{4\kappa}{\rho}\,\int_{0}^{\xi\delta}\frac{1-\cos \tau}{\tau^{1+2\alpha}}d\tau\right)^{-1/2}\\&\qquad\qquad
+\frac{2\kappa\xi^{-\alpha-1}\sin(\xi\delta)}{\rho\delta^{2\alpha-1}}\left(\frac{4\kappa}{\rho}\,\int_{0}^{\xi\delta}\frac{1-\cos \tau}{\tau^{1+2\alpha}}d\tau\right)^{-1/2}\\&\qquad\qquad
-\frac{4\kappa^2\xi^{-3\alpha-2}(1-\cos(\xi\delta))^2}{\rho^2\delta^{
4\alpha}}\left(\frac{4\kappa}{\rho}\,\int_{0}^{\xi\delta}\frac{1-\cos \tau}{\tau^{1+2\alpha}}d\tau\right)^{-3/2}.
\end{split}\end{equation}
That is, as~$\xi\to+\infty$,
\begin{equation}\label{UnIONSistdcY}\begin{split}
&\omega''(\xi)
\,=\,\alpha(\alpha-1)\xi^{\alpha-2}\left(\frac{4\kappa}{\rho}\,\int_{0}^{\xi\delta}\frac{1-\cos \tau}{\tau^{1+2\alpha}}d\tau\right)^{1/2}
\\&\qquad\qquad+\frac{2\kappa\xi^{-\alpha-1}\sin(\xi\delta)}{
\rho\delta^{2\alpha-1}}\left(\frac{4\kappa}{\rho}\,\int_{0}^{\xi\delta}\frac{1-\cos \tau}{\tau^{1+2\alpha}}d\tau\right)^{-1/2}+o(\xi^{\alpha-2})+
o(\xi^{ -\alpha-1})
.\end{split}\end{equation}
Consequently, if~$\alpha\in\left(\frac12,1\right)$,
\begin{eqnarray*}
|\xi|^{2-\alpha} \omega''(\xi)=
\alpha(\alpha-1)
\left(\frac{4\kappa}{\rho}\,\int_{0}^{\xi\delta}\frac{1-\cos \tau}{\tau^{1+2\alpha}}d\tau\right)^{1/2}+
o(1)
\end{eqnarray*}
as~$\xi\to+\infty$.

Similarly, if~$\alpha\in\left(0,\frac12\right)$,
one deduces from~\eqref{UnIONSistdcY} that
\begin{eqnarray*}
|\xi|^{1+\alpha} \omega''(\xi)=
\frac{2\kappa \sin(\xi\delta)}{
\rho\delta^{2\alpha-1}}\left(\frac{4\kappa}{\rho}\,
\int_{0}^{\xi\delta}\frac{1-\cos \tau}{\tau^{1+2\alpha}}d\tau
\right)^{-1/2}+o(1)
\end{eqnarray*}
as~$\xi\to+\infty$.

And also, if~$\alpha=\frac12$, we get from~\eqref{UnIONSistdcY}
that
\begin{eqnarray*}
|\xi|^{3/2} \omega''(\xi)=
-\frac14\left(\frac{4\kappa}{\rho}\,\int_{0}^{\xi\delta}
\frac{1-\cos \tau}{\tau^{2}}d\tau\right)^{1/2}
+\frac{2\kappa \sin(\xi\delta)}{
\rho}\left(\frac{4\kappa}{\rho}\,\int_{0}^{\xi\delta}
\frac{1-\cos \tau}{\tau^{2}}d\tau\right)^{-1/2}+o(1)
\end{eqnarray*}
as~$\xi\to+\infty$.

These observations (and the fact that~$\omega''$ is an even function)
lead to formulas~\eqref{fifth}, \eqref{sixth} and~\eqref{seventh}
of Theorem~\ref{OMEPI-P}.

For completeness, we now provide alternative proofs for
the statements in~\eqref{fifth}, \eqref{sixth} and~\eqref{seventh}
in Theorem~\ref{OMEPI-P}.
To this end,
using~\eqref{RIEMANN}, we observe that,
when~$\alpha\in\left[\frac12,1\right)$,
\begin{eqnarray*}&&
\liminf_{R\to+\infty}
\int_{0}^{R}
\frac{\cos \tau}{\tau^{2\alpha-1}}d\tau
=\liminf_{R\to+\infty}
\int_{0}^{R}\left(\left(
\frac{\sin \tau}{\tau^{2\alpha-1}}\right)'
+(2\alpha-1)\frac{\sin \tau}{\tau^{2\alpha}}\right)
d\tau\\&&\qquad=\liminf_{R\to+\infty}
\frac{\sin R}{R^{2\alpha-1}}+
(2\alpha-1)\int_{0}^{+\infty}
\frac{\sin \tau}{\tau^{2\alpha}}
d\tau\\&&\qquad=\begin{cases}
-1& {\mbox{ if }}\alpha=\frac12,\\\displaystyle
2\alpha(2\alpha-1)\int_{0}^{+\infty}
\frac{1-\cos \tau}{\tau^{1+2\alpha}}\,d\tau
& {\mbox{ if }}\alpha\in\left(\frac12,1\right).
\end{cases}
\end{eqnarray*}
Similarly,
\begin{eqnarray*}&&
\limsup_{R\to+\infty}
\int_{0}^{R}
\frac{\cos \tau}{\tau^{2\alpha-1}}d\tau =\begin{cases}
1& {\mbox{ if }}\alpha=\frac12,\\ \displaystyle
2\alpha(2\alpha-1)\int_{0}^{+\infty}
\frac{1-\cos \tau}{\tau^{1+2\alpha}}\,d\tau
& {\mbox{ if }}\alpha\in\left(\frac12,1\right).
\end{cases}
\end{eqnarray*}
Accordingly, recalling~\eqref{IMP} and~\eqref{TMOSTP}, exploiting~\eqref{RIEMANN}
once again, and using the substitution~$\tau:=|\xi|\delta z$,
if~$\alpha\in\left[\frac12,1\right)$,
\begin{eqnarray*}&&
\sqrt{ \frac{4\kappa
}{\rho}\,\int_{0}^{+\infty}
\frac{1-\cos \tau}{\tau^{1+2\alpha}}d\tau}
\liminf_{\xi\to\pm\infty}
|\xi|^{2-\alpha} \omega''(\xi)\\&=&
\liminf_{\xi\to\pm\infty}
|\xi|^{2-2\alpha}\omega(\xi)\omega''(\xi)\\&=&
\liminf_{\xi\to\pm\infty}
|\xi|^{2-2\alpha}\left(
\frac{2\kappa \delta^{2-2\alpha}}{\rho}\,\int_{0}^{1}
\frac{\cos (\xi\delta z)}{z^{2\alpha-1}}dz-
\left(\frac{2\kappa}{\rho\,\delta^{2\alpha-1} \omega(\xi)}\,\int_{0}^{1}
\frac{\sin (\xi\delta z)}{z^{2\alpha}}dz\right)^2\right)\\&=&\liminf_{\xi\to\pm\infty}
\frac{2\kappa}{\rho }\,\int_{0}^{|\xi|\delta}
\frac{\cos \tau}{\tau^{2\alpha-1}}d\tau-
\left(\frac{2\kappa\,|\xi|^{\alpha}}{\rho\, \omega(\xi)}\,\int_{0}^{|\xi|\delta}
\frac{\sin \tau}{\tau^{2\alpha}}d\tau\right)^2\\&=&
\frac{2\kappa}{\rho }\,\liminf_{R\to+\infty}\int_{0}^{R}
\frac{\cos \tau}{\tau^{2\alpha-1}}d\tau-
\left(\frac{\kappa}{\rho\, \sqrt{ \frac{\kappa
}{\rho}\,\int_{0}^{+\infty}
\frac{1-\cos \tau}{\tau^{1+2\alpha}}d\tau}}\,\int_{0}^{+\infty}
\frac{\sin \tau}{\tau^{2\alpha}}d\tau\right)^2\\&=&
\frac{2\kappa}{\rho }\,\liminf_{R\to+\infty}\int_{0}^{R}
\frac{\cos \tau}{\tau^{2\alpha-1}}d\tau-
\left(\frac{2\alpha\sqrt\kappa}{\sqrt\rho} \sqrt{ \int_{0}^{+\infty}
\frac{1-\cos \tau}{\tau^{1+2\alpha}}d\tau}\right)^2\\&=&\begin{cases}
\displaystyle
-\frac{2\kappa}\rho+
\frac{ \kappa}{ \rho}{ \int_{0}^{+\infty}
\frac{1-\cos \tau}{\tau^{2}}d\tau}& {\mbox{ if }}\alpha=\frac12,\\
\\
\displaystyle
\frac{4\alpha(\alpha-1)\kappa}\rho\int_{0}^{+\infty}
\frac{1-\cos \tau}{\tau^{1+2\alpha}}\,d\tau
& {\mbox{ if }}\alpha\in\left(\frac12,1\right).
\end{cases}
\end{eqnarray*}
Similarly,
\begin{eqnarray*}&&
\sqrt{ \frac{4\kappa
}{\rho}\,\int_{0}^{+\infty}
\frac{1-\cos \tau}{\tau^{1+2\alpha}}d\tau}
\limsup_{\xi\to\pm\infty}
|\xi|^{2-\alpha} \omega''(\xi)=\begin{cases}\displaystyle
\frac{2\kappa}\rho+
\frac{ \kappa}{ \rho}{ \int_{0}^{+\infty}
\frac{1-\cos \tau}{\tau^{2}}d\tau}& {\mbox{ if }}\alpha=\frac12,\\
\\
\displaystyle
\frac{4\alpha(\alpha-1)\kappa}\rho\int_{0}^{+\infty}
\frac{1-\cos \tau}{\tau^{1+2\alpha}}\,d\tau
& {\mbox{ if }}\alpha\in\left(\frac12,1\right).
\end{cases}
\end{eqnarray*}
These observation give formulas~\eqref{fifth} and~\eqref{seventh}
in Theorem~\ref{OMEPI-P}.

Besides, when~$\alpha\in\left(0,\frac12\right)$,
we can infer from~\eqref{IMP}, \eqref{RIEMANN} and~\eqref{TMOSTP},
that
\begin{eqnarray*}&&
\sqrt{ \frac{4\kappa
}{\rho}\,\int_{0}^{+\infty}
\frac{1-\cos \tau}{\tau^{1+2\alpha}}d\tau}
\liminf_{\xi\to\pm\infty}
|\xi|^{1+\alpha} \omega''(\xi)\\&=&
\liminf_{\xi\to\pm\infty}
\frac{2\kappa|\xi|^{2\alpha-1}}{\rho }\,\int_{0}^{|\xi|\delta}
\frac{\cos \tau}{\tau^{2\alpha-1}}d\tau-|\xi|^{2\alpha-1}
\left(\frac{2\kappa\,|\xi|^{\alpha}}{\rho\, \omega(\xi)}\,\int_{0}^{|\xi|\delta}
\frac{\sin \tau}{\tau^{2\alpha}}d\tau\right)^2\\&=&
\liminf_{R\to+\infty}
\frac{2\kappa \delta^{1-2\alpha}}{\rho R^{1-2\alpha}}\,
\int_{0}^{R}\tau^{1-2\alpha}\cos \tau d\tau-0\\&=&
\liminf_{R\to+\infty}
\frac{2\kappa \delta^{1-2\alpha}}{\rho R^{1-2\alpha}}\,
\int_{0}^{R} \Big( (\tau^{1-2\alpha}\sin \tau)'-(1-2\alpha)
\tau^{-2\alpha}\sin \tau \Big)d\tau\\&=&
\liminf_{R\to+\infty}
\frac{2\kappa \delta^{1-2\alpha}}{\rho R^{1-2\alpha}}\left(
R^{1-2\alpha}\sin R
-(1-2\alpha)
\int_{0}^{+\infty} 
\frac{\sin \tau }{\tau^{2\alpha}}d\tau\right)\\&=&
-\frac{2\kappa \delta^{1-2\alpha}}{\rho }.
\end{eqnarray*}
Similarly, when~$\alpha\in\left(0,\frac12\right)$,
\begin{eqnarray*}
\sqrt{ \frac{4\kappa
}{\rho}\,\int_{0}^{+\infty}
\frac{1-\cos \tau}{\tau^{1+2\alpha}}d\tau}
\limsup_{\xi\to\pm\infty}
|\xi|^{1+\alpha} \omega''(\xi)&=&
\frac{2\kappa \delta^{1-2\alpha}}{\rho }.\end{eqnarray*}
These observations give formula~\eqref{sixth} in Theorem~\ref{OMEPI-P}.
\end{proof}

\section{Decay estimates}
\label{decay}

In this section we prove  the relevant spatial decay properties of the solution of 
\eqref{eq:linear_per} which also will be useful in the next section to prove the existence of conserved quantities for our problem.

\begin{theorem}\label{TH4.1}
For every given~$t\ge0$, the function~$\R\ni x\mapsto u(t,x)$ in~\eqref{eq:sol}
belongs to the Schwartz space.
\end{theorem}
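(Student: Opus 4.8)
The plan is to argue on the Fourier side, exploiting that, although the dispersion relation~$\omega$ is not differentiable at the origin, it enters~\eqref{eq:sol} only through quantities that are entire functions of~$\omega^2$. With the convention of footnote~\ref{FOUCO}, identity~\eqref{eq:sol} says exactly that~$\widehat{u(t,\cdot)}(\xi)=\widehat{v_0}(\xi)\,P(\xi)+\widehat{v_1}(\xi)\,Q(\xi)$, where
\[ P(\xi):=\cos\big(\omega(\xi)\,t\big)\qquad\text{and}\qquad Q(\xi):=\frac{\sin\big(\omega(\xi)\,t\big)}{\omega(\xi)}. \]
Since the Fourier transform is a topological isomorphism of~$\mathcal S(\R)$, it suffices to show that~$\widehat{u(t,\cdot)}\in\mathcal S(\R)$. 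Now~$\widehat{v_0},\widehat{v_1}\in\mathcal S(\R)$ (as already observed in the proof of Theorem~\ref{linsol}), and the product of a function in~$\mathcal S(\R)$ with one in the space~$\mathcal O_M(\R)$ of those~$C^\infty$ functions whose derivatives all grow at most polynomially is again in~$\mathcal S(\R)$, by the Leibniz rule; hence it is enough to prove that~$P,Q\in\mathcal O_M(\R)$.

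The crucial observation --- and the reason why a soft, general argument does not apply directly --- is that, by~\eqref{first} in Theorem~\ref{OMEPI-P}, the function~$\omega$ has a corner at the origin and is in particular \emph{not} differentiable there, whereas~$P$ and~$Q$ depend on~$\omega$ only through~$\omega^2$. Indeed, the functions~$\Phi_1(s):=\cos\sqrt s=\sum_{k\ge0}\frac{(-1)^ks^k}{(2k)!}$ and~$\Phi_2(s):=\frac{\sin\sqrt s}{\sqrt s}=\sum_{k\ge0}\frac{(-1)^ks^k}{(2k+1)!}$ are entire, and since~$t\ge0$ and~$\omega\ge0$ we may write~$P(\xi)=\Phi_1\big(t^2\omega^2(\xi)\big)$ and~$Q(\xi)=t\,\Phi_2\big(t^2\omega^2(\xi)\big)$. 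Thus~$P$ and~$Q$ are compositions of entire functions with~$\omega^2$, and their smoothness and growth will be reduced to those of~$\omega^2$.

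I would then record two elementary facts. First, for~$j\in\{1,2\}$ every derivative~$\Phi_j^{(n)}$ is bounded on~$[0,+\infty)$: on~$[0,1]$ this is clear by analyticity, while for~$s\ge1$, writing~$r:=\sqrt s\ge1$ and using~$\frac{d}{ds}=\frac{1}{2r}\frac{d}{dr}$, one checks inductively that~$\Phi_j^{(n)}(s)$ is a finite linear combination of terms~$r^{-m}$ times a trigonometric function of~$r$, with exponents~$m\ge0$, hence bounded. Second,~$\omega^2\in\mathcal O_M(\R)$: from~\eqref{eq:disp_rel} --- either by expanding~$1-\cos$ into its everywhere-convergent Taylor series, or by differentiating under the integral sign (which is legitimate since, for every~$n$, the~$n$-th~$\xi$-derivative of~$\frac{1-\cos(\xi\delta z)}{|z|^{1+2\alpha}}$ is dominated, locally uniformly in~$\xi$, by an~$L^1([-1,1])$ function) --- one gets that~$\omega^2\in C^\infty(\R)$, while~$|\omega^2(\xi)|\le C\xi^2$ by~\eqref{ILCOS} and, for~$n\ge1$, $(\omega^2)^{(n)}(\xi)=O(|\xi|)$ (in fact~$(\omega^2)^{(n)}$ is bounded as soon as~$n\ge2$).

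Combining these ingredients, Fa\`a di Bruno's formula expresses each~$P^{(n)}(\xi)$ as a finite sum of terms of the form~$\Phi_1^{(j)}\big(t^2\omega^2(\xi)\big)\prod_i\big(t^2(\omega^2)^{(k_i)}(\xi)\big)$: the first factor is bounded by the first fact, and the remaining product is of polynomial growth by the second, so~$P^{(n)}$ is of polynomial growth; the same reasoning applies to~$Q$. Hence~$P,Q\in\mathcal O_M(\R)$, so~$\widehat{u(t,\cdot)}\in\mathcal S(\R)$, and therefore~$u(t,\cdot)\in\mathcal S(\R)$. I expect the only genuine difficulty to be the one already stressed --- the non-smoothness of~$\omega$ at the origin --- which the plan circumvents by never differentiating~$\omega$ itself, only the globally smooth even quantity~$\omega^2$ and the entire functions~$\Phi_1,\Phi_2$; at infinity nothing is delicate, since there~$\omega$ is smooth and~$P,Q$ are bounded, so that no decay is lost in the process.
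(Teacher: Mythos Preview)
Your proposal is correct and follows essentially the same route as the paper's proof: both recognize that the obstruction is the corner of~$\omega$ at the origin, both circumvent it by passing to the smooth (indeed real analytic) quantity~$\varpi=\omega^2$ and observing that~$\cos(\omega t)$ and~$\frac{\sin(\omega t)}{\omega}$ are entire functions of~$\varpi$, and both finish with Fa\`a di Bruno to place the multipliers in~$\mathcal O_M(\R)$. The paper organizes the argument slightly more abstractly (an auxiliary even analytic~$\Psi$ with bounded derivatives at infinity, and its ``square-root reparametrization''~$\psi$), and it proves the somewhat sharper bound~$|\varpi^{(\ell)}(\xi)|\le C_\ell(1+|\xi|^{2\alpha})$ via an inductive formula for~$\varpi^{(\ell)}$, whereas you differentiate directly under the integral sign; but these are stylistic differences, not substantive ones.
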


\begin{proof} Let
\begin{equation}\label{QUAFDRA}
\varpi(\xi):=\omega^2(\xi)\end{equation} and~$
\Psi$ be an even real analytic function such that\footnote{Statements like~\eqref{SERV} mean that for
all~$\ell\in\N$,
$$ \sup_{r\in[1,+\infty)}|\Psi^{(\ell)}(r)|<+\infty,$$
where~$\Psi^{(\ell)}$ denotes the derivative of~$\Psi$ of order~$\ell$.}
\begin{equation}\label{SERV}
{\mbox{the derivatives of~$\Psi$ of any order
are bounded in~$[1,+\infty)$.}}
\end{equation}
Thus, by the analyticity of~$\Psi$, for suitable coefficients~$\Psi_j\in\R$, with~$|\Psi_j|\le\frac{C^j}{(2j)!}$ for some~$C>0$,
we write
\begin{equation}\label{PSIOLS-01} \Psi(r):=\sum_{j=0}^{+\infty}\Psi_j\, r^{2j}.\end{equation}
Therefore, setting
\begin{equation}\label{PSIOLS-02} \psi(r):=\sum_{j=0}^{+\infty}\Psi_j\, r^{j},\end{equation}
we find that \begin{equation}\label{PSIOLS} {\mbox{$\psi$
is also a real analytic function.}}\end{equation}
By~\eqref{eq:disp_rel}, we know that
\begin{equation}\label{VAR90}
\begin{split}&
\varpi(\xi)=\frac{4\kappa}{\rho\,\delta^{2\alpha}}\,\int_{0}^{1}\frac{1-\cos (\xi\delta z)}{z^{1+2\alpha}}dz=
\frac{4\kappa}{\rho\,\delta^{2\alpha}}\,\int_{0}^{1}
\sum_{k=1}^{+\infty} \frac{(-1)^{k+1} (\xi\delta)^{2k} z^{2k-1-2\alpha}}{(2k)!}\,dz\\&\qquad\qquad\qquad=
\frac{2\kappa}{\rho}\,
\sum_{k=1}^{+\infty} \frac{(-1)^{k+1} \xi^{2k}\delta^{2(k-\alpha)} }{(k-\alpha)(2k)!} .
\end{split}\end{equation}
In particular, we have that
\begin{equation}\label{REANAL-PEROME}
{\mbox{$\varpi$ is also a real analytic function.}}\end{equation} By composition,
it follows that
\begin{equation}\label{REANAL}
{\mbox{$\phi(\xi):=\psi(\varpi(\xi))$ is also real analytic.}}\end{equation}
Notice that, by construction
\begin{equation}\label{BOUxDER0} \phi(\xi)=\sum_{j=0}^{+\infty}\Psi_j\, (\varpi(\xi))^{j}
=\sum_{j=0}^{+\infty}\Psi_j\, (\omega(\xi))^{2j}
=\Psi(\omega(\xi)).\end{equation}
We claim that
\begin{equation}\label{BOUxDER-PEROME}
{\mbox{the derivatives of~$\varpi$ of any order divided by~$(1+|\xi|^{2\alpha})$ are bounded.}}
\end{equation}
To this aim, in view of~\eqref{REANAL-PEROME}, it suffices to show that, for all~$\ell\in\N$
and~$|\xi|\ge1$,
\begin{equation}\label{BOUxDER-LOAL-PEROME}
|\varpi^{(\ell)}(\xi)|\le C_\ell\,(1+|\xi|^{2\alpha-\ell}).
\end{equation}
For this, we first show that for every~$\ell$ there exist~$C_\ell\in\R$
and a function~$F_\ell\in C^\infty((1/2,+\infty))$ whose
derivatives of any order are bounded in~$[1,+\infty)$ such that, for every~$\xi\in[1,+\infty)$,
\begin{equation}\label{INFUE-PERO}
\varpi^{(\ell)} (\xi)= F_\ell (\xi)+ C_\ell\,\xi^{2\alpha-\ell} \,\int_{0}^{\xi\delta}\frac{1-\cos \tau}{\tau^{1+2\alpha}}d\tau.
\end{equation}
In this notation, $C_\ell$ and~$F_\ell$ may also depend on the structural parameters~$\kappa$, $\rho$ and~$\delta$,
which are supposed to be fixed quantities.
Thus, we argue by induction over~$\ell$. When~$\ell=0$,
changing variable~$\tau:=\xi\delta z$ in the first integral of~\eqref{VAR90} we find that
$$
\varpi(\xi)=\frac{4\kappa\xi^{2\alpha}}{\rho}\,\int_{0}^{\xi\delta}\frac{1-\cos \tau}{\tau^{1+2\alpha}}d\tau,
$$
which is~\eqref{INFUE-PERO} with~$C_0:=\frac{4\kappa}\rho$ and~$F_0:=0$.
We now suppose recursively that~\eqref{INFUE-PERO} holds true for the index~$\ell$
and we establish it for the index~$\ell+1$. For this, taking one further derivative,
the inductive assumption leads to
\begin{eqnarray*}
\varpi^{(\ell+1)} (\xi)&=& \frac{d}{d\xi}\left[
F_\ell (\xi)+ C_\ell\,\xi^{2\alpha-\ell} \,\int_{0}^{\xi\delta}\frac{1-\cos \tau}{\tau^{1+2\alpha}}d\tau\right]\\&=&
F_\ell' (\xi)+ C_\ell\delta\xi^{2\alpha-\ell} \frac{1-\cos (\xi\delta)}{(\xi\delta)^{1+2\alpha}}
+ (2\alpha-\ell)C_\ell\,\xi^{2\alpha-\ell-1} \,\int_{0}^{\xi\delta}\frac{1-\cos \tau}{\tau^{1+2\alpha}}d\tau\\&=&
F_\ell' (\xi)+ C_\ell\delta^{-2\alpha}\xi^{-(\ell+1)} \big(1-\cos (\xi\delta)\big)
+ (2\alpha-\ell)C_\ell\,\xi^{2\alpha-(\ell+1)} \,\int_{0}^{\xi\delta}\frac{1-\cos \tau}{\tau^{1+2\alpha}}d\tau.
\end{eqnarray*}
Hence, we define~$C_{\ell+1}:=(2\alpha-\ell)C_\ell$ and
$$F_{\ell+1}(\xi):=
F_\ell' (\xi)+ C_\ell\delta^{-2\alpha}\xi^{-(\ell+1)} \big(1-\cos (\xi\delta)\big),$$
and we stress that all the derivatives of~$F_{\ell+1}$ are bounded in~$[1,+\infty)$,
since so are the ones of~$F_\ell $. The inductive step is thereby complete and we have thus
established~\eqref{INFUE-PERO}.

We also notice that, if~$\xi\ge1$, then
$$ \xi^{2\alpha}\int_{0}^{\xi\delta}\frac{1-\cos \tau}{\tau^{1+2\alpha}}d\tau
\le\xi^{2\alpha}\left[
\int_{0}^{\delta}\frac{ \tau^2}{\tau^{1+2\alpha}}d\tau
+
\int_{\delta}^{\xi\delta}\frac{2}{\tau^{1+2\alpha}}d\tau\right]
\le\xi^{2\alpha}\left[
\frac{\delta^{2-2\alpha}}{2-2\alpha}
+
\frac{ \delta^{-2\alpha}}{2\alpha}\right]\le C \xi^{2\alpha} ,
$$
for some~$C>0$ depending on~$\delta$. This and~\eqref{INFUE-PERO}
yield that
$$ |\varpi^{(\ell)} (\xi)|\le |F_\ell (\xi)|+ C\,C_\ell\,\xi^{2\alpha-\ell}
.$$
{F}rom this and the parity of~$\varpi$ we obtain~\eqref{BOUxDER-LOAL-PEROME}
(up to renaming~$C_\ell$)
and thus~\eqref{BOUxDER-PEROME}.

Now we show that
\begin{equation}\label{BOUxDER-PEROME-PERPERI}
{\mbox{the derivatives of~$\psi$ of any order are bounded in~$[0,+\infty)$.}}
\end{equation}
{F}rom~\eqref{PSIOLS}, it suffices to check this claim in~$[1,+\infty)$.
For this, using~\eqref{PSIOLS-01} and~\eqref{PSIOLS-02}, we observe that~$\psi(r)=\Psi(\sqrt{r})$
for all~$r>0$.
For this reason, by the Fa\`a di Bruno's Formula,
\begin{equation}\label{4V3ACPcA}
\psi^{(\ell)}(r)=
{d^{\ell} \over d\xi^{\ell}}\big(\Psi(\sqrt{r})\big)=\sum {\frac {\ell!}{m!}}\; \Psi^{(m_{1}+\cdots +m_{\ell})}(\sqrt{r})\;\prod_{j=1}^{\ell}\left(
\frac{\frac{d^j}{dr^j}\sqrt{r}}{j!}
\right)^{m_{j}},
\end{equation}
with the sum above ranging over all~$m\in\N^\ell$ satisfying the constraint \begin{equation}\label{FFADI}
\sum_{j=1}^\ell j\, m_j=\ell\end{equation}
and the standard multiindex notation~$m!=m_{1}!\,m_{2}!\,\cdots \,m_{\ell}!$ has been used.
We also observe the recursive fact that, for every~$j\ge1$,
\begin{equation}\label{LEINS} \frac{d^j}{dr^j}\sqrt{r}=
\frac{(-1)^{j+1} r^{\frac{1-2j}{2}} }{2^j}\prod_{k=1}^{j-2} (2k+1).\end{equation}
As a consequence, for all~$r\ge1$ and~$j\ge1$,
$$ \left|\frac{d^j}{dr^j}\sqrt{r}\right|\le
\frac{r^{\frac{1-2j}{2}} }{2^j}\prod_{k=1}^{j-2} (2k+1)\leq
\frac{r^{\frac{1-2j}{2}} }{2^{2}}\prod_{k=1}^{j-2} (k+1)=
\frac{r^{\frac{1-2j}{2}} \,(j-1)!}{4}\le\frac{(j-1)!}{4}
$$
and therefore
$$
\prod _{j=1}^{\ell}\left(
\frac{\frac{d^j}{dr^j}\sqrt{r}}{j!}
\right)^{m_{j}}\le
\prod _{j=1}^{\ell}\left(
\frac{1}{4j}
\right)^{m_{j}}\le
\prod _{j=1}^{\ell}\left(
\frac{1}{4}
\right)^{m_{j}}\le C(\ell),
$$
for some~$C(\ell)>0$.
Hence,
exploiting~\eqref{SERV} and~\eqref{4V3ACPcA},
and denoting by~$\bar{C}_\ell$ a bound in~$[1,+\infty)$ for the derivatives of~$\Psi$
up to order~$\ell$ (that takes into account the previous~$C(\ell)$ too),
$$ |\psi^{(\ell)}(r)|\le
\bar{C}_\ell\,\sum {\frac {\ell!}{m!}},$$
from which the desired result in~\eqref{BOUxDER-PEROME-PERPERI} plainly follows.

We now claim that for every~$\ell\in\N$ there exists~$C^\#_\ell\ge1$ such that, for every~$\xi\in\R$,
\begin{equation}\label{BOUxDER}
|\phi^{(\ell)}(\xi)|\le C^\#_\ell\,(1+|\xi|)^{C^\#_\ell}.
\end{equation}
To this end,
we exploit the Fa\`a di Bruno's Formula to see that,
for every~$\ell\in\N$,
$$ 
\phi^{(\ell)}(\xi)=
{d^{\ell} \over d\xi^{\ell}}\big(\psi(\varpi(\xi))\big)=\sum {\frac {\ell!}{m!}}\; \psi^{(m_{1}+\cdots +m_{\ell})}(\varpi(\xi))\;\prod_{j=1}^{\ell}\left({\frac {\varpi^{(j)}(\xi)}{j!}}\right)^{m_{j}},
$$
with the sum above ranging over all~$m\in\N^\ell$ satisfying the constraint in~\eqref{FFADI}.
Thus, recalling~\eqref{BOUxDER-PEROME}
and~\eqref{BOUxDER-PEROME-PERPERI}, we pick~$C^\star_\ell\ge1$ sufficiently large such that,
for all~$j\le \ell$,
\begin{eqnarray*}&&
|\varpi^{(j)}(\xi)|\le {C^\star_\ell}\,(1+|\xi|^{2\alpha})\qquad{\mbox{ for every }}\xi\in\R\\
{\mbox{and }}&&|\psi^{(j)}(r)|\le  {C^\star_\ell}\qquad{\mbox{ for every }}r\in[0,+\infty).
\end{eqnarray*}
In this way, we find that
\begin{eqnarray*}&& |\phi^{(\ell)}(\xi)|\le C^\star_\ell \sum {\frac {\ell!}{m!}}\; \prod_{j=1}^{\ell}\left({\frac {{C^\star_\ell}\,(1+|\xi|^{2\alpha})}{j!}}\right)^{m_{j}}\\&&\qquad
\le C^\star_\ell \sum {\frac {\ell!}{m!}}\; \prod_{j=1}^{\ell}\left({{{C^\star_\ell}\,(1+|\xi|^{2\alpha})}}\right)^{\ell}
=(C^\star_\ell)^{1+\ell^2}(1+|\xi|^{2\alpha})^{\ell^2}\sum {\frac {\ell!}{m!}},
\end{eqnarray*}
which leads to~\eqref{BOUxDER}.

Thus, from~\eqref{REANAL}, \eqref{BOUxDER0}
and~\eqref{BOUxDER}, we obtain that
\begin{equation}\label{JMS:PSK-oiks-0wFOU}
\begin{split}&
{\mbox{if~$v$ is
a smooth and rapidly decreasing function in the Schwartz space,}}\\&
{\mbox{then so is the function~$\xi\mapsto\hat v(\xi)\phi (\xi)=
\hat v(\xi)\Psi(\omega(\xi))$,}}\\&
{\mbox{and so is its Fourier antitransform.}}\end{split}\end{equation}
Applying this with~$\Psi(r):=\cos r$ and with~$\Psi(r):=\frac{\sin r}r$,
and recalling the representation formula in~\eqref{eq:sol},
we obtain the desired result in Theorem~\ref{TH4.1}. 
However, to perform this last step, we need to check that
the functions~$\Psi(r):=\cos r$ and~$\Psi(r):=\frac{\sin r}r$,
satisfy the hypothesis in~\eqref{SERV}. This is obvious if~$\Psi(r):=\cos r$.
If instead~$\Psi(r):=\frac{\sin r}r$, we use that
$$\Psi(r)=\sum_{j=1}^{+\infty} \frac{(-1)^j r^{2j}}{(2j)!}$$
to see that~$\Psi$ is analytic, hence all its derivatives are bounded in~$(-1,1)$.
Thus, it only remains to check that all its derivatives are bounded in~$\R\setminus(-1,1)$.
By even parity, it suffices to focus on~$[1,+\infty)$.
For this, we observe that~$\Psi=\Psi_1\,\Psi_2$, where~$\Psi_1(r):=\sin r$
and~$\Psi_2(r):=1/r$. Notice that the derivatives of~$\Psi_1$ and~$\Psi_2$
of any order are bounded in~$[1,+\infty)$. This fact and the General Leibniz Rule
yield that
the derivatives of~$\Psi$ of all orders are bounded in~$[1,+\infty)$,
and the proof of Theorem~\ref{TH4.1} is thereby complete.
\end{proof}

As a byproduct of the previous results, we now point out some integrability estimates
that will be used in Section~\ref{CON:SEC} to introduce some useful conserved quantities
for solutions of equation~\eqref{eq:linear_per}.

\begin{corollary}
Let $v_0,\,v_1\in\mathcal{S}(\mathbb{R})$ and~$0<\alpha <1$. Let~$u$ be a solution of
problem  \eqref{eq:linear_per}. Let also~$w(t,x):=(1+|x|)u(t,x)$ and~$W(t,x):=(1+|x|)u_t(t,x)$.
Then, for all~$t>0$,
\begin{eqnarray}
\label{ATT:001}&& u_t(t,\cdot)\in L^2(\R),\\
\label{ATT:002}&& w(t,\cdot)\in L^1(\R),\\
{\mbox{and }} \label{ATT:003}&& W(t,\cdot)\in L^1(\R).
\end{eqnarray}
Moreover,
\begin{eqnarray}
&&\label{ATT:004}
\int_{\mathbb{R}}\int_{-\delta}^{\delta}\frac{|u(t,x)-u(t,x-y)|^2}{|y|^{1+2\alpha}}\,dx\,dy<+\infty,\\
\label{ATT:004:CON}&&\nonumber
\lim_{t\to0^+}
\int_{\mathbb{R}}\int_{-\delta}^{\delta}\frac{|u(t,x)-u(t,x-y)|^2}{|y|^{1+2\alpha}}\,dx\,dy=
\frac{\rho}{\kappa}\int_{\mathbb{R}} 
\omega^2(\xi)|\widehat{v_0}(\xi)|^2\,d\xi\\&&\qquad\qquad=
\int_{\mathbb{R}}\int_{-\delta}^{\delta}\frac{|v_0(x)-v_0(x-y)|^2}{|y|^{1+2\alpha}}\,dx\,dy
,\\
&&\label{ATT:004BIS}
\int_{\mathbb{R}}\int_{-\delta}^{\delta}\frac{|x|\,|2u(t,x)-u(t,x+y)-u(t,x-y)|}{|y|^{1+2\alpha}}\,dx\,dy<+\infty
\end{eqnarray}
and\footnote{Here and in the rest of this paper, the notation~$\int_A\int_B f(x,y)\,dx\,dy$
means that we are integrating over~$x\in A$ and~$y\in B$ (not vice-versa).
This notation is inspired by the identity
$$\int_A\int_B f(x,y)\,dx\,dy=\iint_{A\times B} f(x,y)\,dx\,dy$$
and has the advantage of maintaining the order between variables of integration
and domains of integration.}
\begin{eqnarray}\label{ATT:005}
&&\lim_{t\to0^+}\int_\R x u_t(t,x) \,dx = \int_\R x v_1(x)\, dx.
\end{eqnarray}
\end{corollary}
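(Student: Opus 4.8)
The plan is to base everything on the representation~\eqref{eq:sol}, which identifies the Fourier transform of $u(t,\cdot)$ as $\widehat u(t,\xi)=\widehat{v_0}(\xi)\cos(\omega(\xi)t)+\frac{\widehat{v_1}(\xi)}{\omega(\xi)}\sin(\omega(\xi)t)$ and, differentiating under the integral sign (legitimate since the $t$-derivative of the integrand is dominated by $\omega(\xi)|\widehat{v_0}(\xi)|+|\widehat{v_1}(\xi)|\in L^1_\xi$, locally uniformly in $t$), that $\widehat{u_t}(t,\xi)=-\omega(\xi)\sin(\omega(\xi)t)\,\widehat{v_0}(\xi)+\cos(\omega(\xi)t)\,\widehat{v_1}(\xi)$. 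By Theorem~\ref{TH4.1}, $u(t,\cdot)\in\mathcal S(\R)$, and since $\mathcal S(\R)$ is stable under multiplication by polynomials this already gives~\eqref{ATT:002}. For~\eqref{ATT:001} and~\eqref{ATT:003} I would first check that also $u_t(t,\cdot)\in\mathcal S(\R)$: indeed $\widehat{u_t}(t,\cdot)$ and all of its $\xi$-derivatives decay faster than any polynomial, because away from the origin $\omega$ and all its derivatives grow at most polynomially (as exploited in the proof of Theorem~\ref{TH4.1}), near the origin $\omega(\xi)\sin(\omega(\xi)t)$ and $\cos(\omega(\xi)t)$ are in fact $C^\infty$ in $\xi$ — since $\omega^2$ is real analytic and even, one has $\omega(\xi)=|\xi|E(\xi)$ with $E$ real analytic, even and positive near $0$, whence these functions equal $\xi E(\xi)\sin(\xi E(\xi)t)$ and $\cos(\xi E(\xi)t)$ — and $\widehat{v_0},\widehat{v_1}$ are Schwartz. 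Then~\eqref{ATT:001} is contained in $\mathcal S(\R)\subset L^2(\R)$, and~\eqref{ATT:003} follows exactly as~\eqref{ATT:002}.

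For~\eqref{ATT:004} and~\eqref{ATT:004:CON} I would argue by Plancherel in the $x$-variable. The Fourier transform of $x\mapsto u(t,x)-u(t,x-y)$ is $(1-e^{iy\xi})\widehat u(t,\xi)$ and $|1-e^{iy\xi}|^2=2(1-\cos(y\xi))$, so, exchanging the (nonnegative) order of integration and using the identity $\int_{-\delta}^{\delta}\frac{2(1-\cos(\xi y))}{|y|^{1+2\alpha}}\,dy=\frac{\rho}{\kappa}\,\omega^2(\xi)$ — which is just~\eqref{eq:disp_rel} after the change of variables $y=\delta z$ — the double integral in~\eqref{ATT:004} becomes a constant multiple (the constant being the one in Plancherel's identity for the chosen normalization) of $\int_{\R}\omega^2(\xi)\,|\widehat u(t,\xi)|^2\,d\xi$. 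This is finite since $\omega^2(\xi)\le C(1+|\xi|^2)$ and $\widehat u(t,\cdot)\in\mathcal S(\R)$, which gives~\eqref{ATT:004}. For~\eqref{ATT:004:CON}, observe that for $t\in(0,1)$ one has $|\widehat u(t,\xi)|\le|\widehat{v_0}(\xi)|+|\widehat{v_1}(\xi)|$, so $\omega^2|\widehat u(t,\cdot)|^2$ is dominated by the fixed integrable function $\omega^2(|\widehat{v_0}|+|\widehat{v_1}|)^2$, while $\widehat u(t,\xi)\to\widehat{v_0}(\xi)$ pointwise as $t\to0^+$; dominated convergence then yields $\int_{\R}\omega^2|\widehat u(t,\cdot)|^2\to\int_{\R}\omega^2|\widehat{v_0}|^2$, and the same Plancherel identity applied to $v_0$ in place of $u(t,\cdot)$ gives the last equality in~\eqref{ATT:004:CON}.

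For~\eqref{ATT:004BIS}, set $g_y(x):=2u(t,x)-u(t,x+y)-u(t,x-y)$, whose $x$-Fourier transform is $2(1-\cos(y\xi))\widehat u(t,\xi)$. By Cauchy--Schwarz with the weight $(1+x^2)$,
\[\int_{\R}|x|\,|g_y(x)|\,dx\le\left(\int_{\R}\frac{x^2}{(1+x^2)^2}\,dx\right)^{\!1/2}\big\|(1+x^2)\,g_y\big\|_{L^2(\R)},\]
and by Plancherel $\big\|(1+x^2)g_y\big\|_{L^2}\le C\big(\|\widehat{g_y}\|_{L^2}+\|\partial_\xi^2\widehat{g_y}\|_{L^2}\big)$. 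Expanding $\partial_\xi^2\widehat{g_y}$ and bounding each resulting term by means of $|1-\cos(y\xi)|\le\frac12(y\xi)^2$, $|\sin(y\xi)|\le|y\xi|$ and the rapid decay of $\widehat u(t,\cdot)$, one finds that every term carries a factor $y^2$, whence $\int_{\R}|x|\,|g_y(x)|\,dx\le C(t)\,y^2$ for all $y$. Therefore
\[\int_{\R}\int_{-\delta}^{\delta}\frac{|x|\,|g_y(x)|}{|y|^{1+2\alpha}}\,dy\,dx\le C(t)\int_{-\delta}^{\delta}|y|^{1-2\alpha}\,dy<+\infty,\]
the integral on the right being convergent precisely because $\alpha<1$; this is~\eqref{ATT:004BIS}.

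Finally, for~\eqref{ATT:005} I would prove that $t\mapsto\int_{\R}x\,u_t(t,x)\,dx$ is constant on $(0,+\infty)$ and then identify the constant. Differentiating under the integral sign (as above) and using~\eqref{eq:linear_per} together with the rewriting of $K(u)$ in~\eqref{PVSTR8},
\[\frac{d}{dt}\int_{\R}x\,u_t(t,x)\,dx=\frac1\rho\int_{\R}x\,K(u)(t,x)\,dx=-\frac{\kappa}{\rho}\int_{\R}x\int_{-\delta}^{\delta}\frac{2u(t,x)-u(t,x+y)-u(t,x-y)}{|y|^{1+2\alpha}}\,dy\,dx;\]
the last double integral is absolutely convergent by~\eqref{ATT:004BIS}, so Fubini applies, and the inner $x$-integral $\int_{\R}x\big(2u(t,x)-u(t,x+y)-u(t,x-y)\big)\,dx$ vanishes for every $y$ by the substitutions $x\mapsto x\mp y$ (legitimate since $u(t,\cdot)\in\mathcal S(\R)$), so $\frac{d}{dt}\int_{\R}x\,u_t(t,x)\,dx\equiv0$. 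It then remains to show $\int_{\R}x\,u_t(t,x)\,dx\to\int_{\R}x\,v_1(x)\,dx$ as $t\to0^+$: here $u_t(t,\cdot)\to v_1$ pointwise (inverse Fourier transform and dominated convergence in $\xi$), and, integrating by parts three times in $\xi$ in the representation of $u_t(t,x)-v_1(x)$ and invoking again the polynomial growth of the derivatives of $\omega$ away from the origin and the smoothness of $\omega(\xi)\sin(\omega(\xi)t)$ and $\cos(\omega(\xi)t)$ near $\xi=0$, one obtains $|u_t(t,x)-v_1(x)|\le C\min\{1,|x|^{-3}\}$ uniformly for $t\in(0,1)$, which provides a fixed $L^1$ majorant for $|x|\,|u_t(t,x)-v_1(x)|$ and lets dominated convergence conclude. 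The step I expect to be the main obstacle is~\eqref{ATT:004BIS} (and its reuse in~\eqref{ATT:005}): one has to extract the full quadratic-in-$y$ cancellation of the symmetric increment $2u(t,x)-u(t,x+y)-u(t,x-y)$ — not merely its boundedness — and couple it with the Schwartz decay in $x$ so that the outcome survives both the singular kernel $|y|^{-1-2\alpha}$ and the weight $|x|$, the exponent $1-2\alpha>-1$ being borderline, which is exactly where $\alpha<1$ enters; a secondary nuisance, relevant to the Schwartz regularity of $u_t$ and to the uniformity in $t$ in~\eqref{ATT:005}, is that near the frequency $\xi=0$, where $\omega$ is only Lipschitz, one must rely on the hidden smoothness $\omega(\xi)=|\xi|E(\xi)$ with $E$ real analytic and even rather than on naive bounds for the $\xi$-derivatives of $\widehat u$.
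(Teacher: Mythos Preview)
Your proposal is correct and follows the same overall strategy as the paper---exploit the Fourier representation~\eqref{eq:sol}, the Schwartz regularity of~$u(t,\cdot)$ from Theorem~\ref{TH4.1}, and the identity relating~$\omega^2$ to the kernel~$|y|^{-1-2\alpha}$---but several individual steps are handled differently, and it is worth recording the comparison.

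For~\eqref{ATT:003} you argue that~$\widehat{u_t}(t,\cdot)$ is smooth near~$\xi=0$ via the factorization~$\omega(\xi)=|\xi|E(\xi)$ with~$E$ even, positive and real analytic near the origin (so that~$\omega\sin(\omega t)$ and~$\cos(\omega t)$ become~$\xi E\sin(\xi E t)$ and~$\cos(\xi E t)$, manifestly smooth). The paper obtains the same conclusion by expanding~$r\sin r$ as an even power series in~$r$ and composing with~$\varpi=\omega^2$, which is real analytic. Your route is a bit more transparent; the paper's route has the advantage of fitting directly into the machinery already developed in the proof of Theorem~\ref{TH4.1}.

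For~\eqref{ATT:004BIS} you use Plancherel and Cauchy--Schwarz in~$x$, extracting the~$y^2$ factor from~$\partial_\xi^k\big[2(1-\cos(y\xi))\widehat u(t,\xi)\big]$ via the elementary bounds~$|1-\cos(y\xi)|\le(y\xi)^2/2$ and~$|\sin(y\xi)|\le|y\xi|$. The paper instead works pointwise in~$x$: it bounds the symmetric increment by~$\sup_{|\zeta|<\delta}|u_{xx}(t,x+\zeta)|\,y^2$ via Taylor's formula and then uses the Schwartz decay of~$u_{xx}$. Both arguments yield~$\int_\R|x|\,|g_y(x)|\,dx\le C(t)\,y^2$, after which the~$y$--integral~$\int_{-\delta}^\delta|y|^{1-2\alpha}\,dy$ converges exactly because~$\alpha<1$; the paper's argument is shorter here.

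For~\eqref{ATT:005} you first prove that~$t\mapsto\int_\R x\,u_t(t,x)\,dx$ is constant (by differentiating, using the equation, Fubini via~\eqref{ATT:004BIS}, and the translation cancellation~$\int_\R x\big(2u-u(\cdot+y)-u(\cdot-y)\big)\,dx=0$), and then identify the limit at~$t=0^+$. The paper instead bounds the difference~$\big|\int_\R x\,u_t(t,x)\,dx-\int_\R x\,v_1(x)\,dx\big|$ directly by~$O(t)$, via integration by parts in~$\xi$ and the estimate~$|\partial_\xi^m\Theta_1(t,\xi)|\le C_m t/(1+\xi^2)$. Your constancy step is in fact the content of the angular--momentum conservation proved later (Theorem~\ref{CONSE}(iii)), so you are effectively anticipating that argument; this is perfectly legitimate and makes the limit step lighter.
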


\begin{proof} {F}rom equation~\eqref{eq:sol}, we have that
\begin{equation}\label{UTDIS}
u_t(t,x)
=\int_{\mathbb{R}}\,e^{-i\xi x}\left[-\omega(\xi)\widehat{v_0}(\xi)\sin\left(\omega(\xi)\,t\right)
+\widehat{v_1}(\xi)\cos\left(\omega(\xi)\,t\right)\right]d\xi\,.
\end{equation}
Then, using~$*$ to denote complex conjugation and~$\delta(\cdot)$ to denote the Dirac Delta
Function\footnote{It is useful
to recall that
$$ \int_\R\int_\R e^{i\xi x}\, \psi(x)\,\widehat\phi(\xi)\,dx\,d\xi=2\pi \int_\R\widehat\psi(\xi)
\,\widehat\phi(\xi)\,d\xi.$$
Also, the Fourier transform of the convolution between~$\psi$ and~$\phi$ is
\begin{eqnarray*}&&
\widehat{\psi*\phi}(\xi)=\frac1{2\pi}\int_\R (\psi*\phi)(x) e^{ix\xi}\,dx=
\frac1{2\pi}\int_\R\int_\R \psi(x-y)\phi(y) e^{ix\xi}\,dx\,dy\\&&\qquad\qquad\qquad
=\frac1{2\pi}\int_\R\int_\R \psi(t)\phi(y) e^{it\xi} e^{iy\xi}\,dt\,dy=2\pi\widehat\psi(\xi)
\,\widehat\phi(\xi).
\end{eqnarray*}
{F}rom these observations and the inversion formula (recall footnote~\ref{FOUCO}), we find that
$$ \int_\R\int_\R e^{i\xi x}\, \psi(x)\,\widehat\phi(\xi)\,dx\,d\xi=\int_\R\widehat{\psi*\phi}(\xi)\,d\xi=
\int_\R e^{-i\xi 0}\,\widehat{\psi*\phi}(\xi)\,d\xi=\psi*\phi(0)=\int_\R \psi(x)\,\phi(x)\,dx.$$
Hence, taking~$\psi:=1$ and arguing in the sense of distributions, 
$$ \int_\R\int_\R e^{i\xi x}\, \widehat\phi(\xi)\,dx\,d\xi=
\int_\R \phi(x)\,dx=\int_\R e^{i 0 x}\,\phi(x)\,dx=
2\pi\widehat\phi(0)=2\pi\int_\R\delta(\xi)\,\widehat\phi(\xi)\,d\xi
,$$
that is, distributionally,
$$
\int_\R  e^{i\xi x} \,dx=2\pi\delta(\xi).
$$
} it follows that
\begin{align}
\|u_t(t,\cdot) \|^2_{L^2(\mathbb{R})}
=&\,
\int_{\mathbb{R}}\int_{\mathbb{R}}\int_{\mathbb{R}} e^{-i(\xi-q)x}\left[-\omega(\xi)\widehat{v_0}(\xi)\sin\left(\omega(\xi)\,t\right)
+\widehat{v_1}(\xi)\cos\left(\omega(\xi)\,t\right)\right]\nonumber\\
&\times\left[-\omega(q)\widehat{v}^*_0(q)\sin\left(\omega(q)\,t\right)
+\widehat{v}^*_1(q)\cos\left(\omega(q)\,t\right)\right]dx\,d\xi\,dq
\nonumber\\
=&\,2\pi
\int_{\mathbb{R}}\int_{\mathbb{R}} \delta(\xi-q)\left[-\omega(\xi)\widehat{v_0}(\xi)\sin\left(\omega(\xi)\,t\right)
+\widehat{v_1}(\xi)\cos\left(\omega(\xi)\,t\right)\right]\nonumber\\
&\times\left[-\omega(q)\widehat{v}^*_0(q)\sin\left(\omega(q)\,t\right)
+\widehat{v}^*_1(q)\cos\left(\omega(q)\,t\right)\right]d\xi\,dq
\nonumber\\
=&\,2\pi
\int_{\mathbb{R}}\left[-\omega(\xi)\widehat{v_0}(\xi)\sin\left(\omega(\xi)\,t\right)
+\widehat{v_1}(\xi)\cos\left(\omega(\xi)\,t\right)\right]\nonumber\\
&\times\left[-\omega(\xi)\widehat{v}^*_0(\xi)\sin\left(\omega(\xi)\,t\right)
+\widehat{v}^*_1(\xi)\cos\left(\omega(\xi)\,t\right)\right]d\xi
\nonumber\\
=&\,2\pi
\int_{\mathbb{R}} \left\{\omega^2(\xi)|\widehat{v_0}(\xi)|^2\sin^2\left(\omega(\xi)\,t\right)
+|\widehat{v_1}(\xi)|^2\cos^2\left(\omega(\xi)\,t\right)\right.\nonumber\\
&\left.-\omega(\xi)\sin\left(\omega(\xi)\,t\right)\cos\left(\omega(\xi)\,t\right)
\left[ \widehat{v_0}(\xi)\widehat{v}^*_1(\xi)
+\widehat{v}^*_0(\xi)\widehat{v_1}(\xi) \right]\right\}d\xi.
\label{eq:kin_en}
\end{align}
{F}rom this and the bound on~$\omega$ in~\eqref{DFCH}, we obtain the desired result in~\eqref{ATT:001}.

Also, the claim in~\eqref{ATT:002} follows directly from Theorem~\ref{TH4.1}.

Additionally, we have that
\begin{equation}\label{rsinr} r\sin r=\sum_{j=0}^{+\infty}\frac{(-1)^j r^{2j+2}}{(2j+1)!}\end{equation}
and therefore, given~$t>0$,
recalling the notation in~\eqref{QUAFDRA}
and the result in~\eqref{REANAL-PEROME},
$$ Z(t,\xi):=\omega(\xi)\sin(\omega(\xi)t)=\frac{\omega(\xi)t\sin(\omega(\xi)t)}t=
\sum_{j=0}^{+\infty}\frac{(-1)^j t^j (\varpi(\xi))^{j+1}}{(2j+1)!},$$
which is a real analytic function in the variable~$\xi$.
Also, in view of~\eqref{DFCH},
we know that~$Z$ grows at most polynomially at infinity in~$\xi$,
whence, if~$v$ belongs to the Schwartz space, then also
the function
\begin{equation}\label{KI:001}
{\mbox{$\xi\mapsto\hat v(\xi) Z(t,\xi)$ belongs to the Schwartz space.}}
\end{equation}
Moreover, using~\eqref{JMS:PSK-oiks-0wFOU},
we have that if~$v$ belongs to the Schwartz space, then also
the function
\begin{equation}\label{KI:002}
{\mbox{$\xi\mapsto\hat v(\xi)\cos(\omega(\xi)t)$
belongs to the Schwartz space.}}
\end{equation}
Combining~\eqref{UTDIS}, \eqref{KI:001} and~\eqref{KI:002}
we obtain~\eqref{ATT:003}, as desired.

Furthermore,
from equation~\eqref{eq:sol}, we have that
\begin{align}
u(t,x)-u(t,x-y)
=&
\int_{\mathbb{R}} e^{-i\xi x}(1-e^{i\xi y})\left[\widehat{v_0}(\xi)\cos\left(\omega(\xi)\,t\right)
+\frac{\widehat{v_1}(\xi)}{\omega(\xi)}\sin\left(\omega(\xi)\,t\right)\right]d\xi
\nonumber\\
=&
\int_{\mathbb{R}} e^{-i\xi x}(1-\cos( \xi y)-i\sin( \xi y))\left[\widehat{v_0}(\xi)\cos\left(\omega(\xi)\,t\right)
+\frac{\widehat{v_1}(\xi)}{\omega(\xi)}\sin\left(\omega(\xi)\,t\right)\right]d\xi\,.
\end{align}
Hence, we obtain
\begin{align}
\int_{\mathbb{R}}\int_{-\delta}^{\delta}&\frac{|u(t,x)-u(t,x-y)|^2}{|y|^{1+2\alpha}}dx\,dy
=
\int_{\mathbb{R}}\int_{-\delta}^{\delta}\frac{1}{|y|^{1+2\alpha}}
\int_{\mathbb{R}} \int_{\mathbb{R}} e^{-i(\xi-q)x}
\nonumber\\&\times
(1-\cos (\xi y)-i\sin( \xi y))(1-\cos( qy)+i\sin( qy))
\nonumber\\&\times
\left[\widehat{v_0}(\xi)\cos\left(\omega(\xi)\,t\right)
+\frac{\widehat{v_1}(\xi)}{\omega(\xi)}\sin\left(\omega(\xi)\,t\right)\right]
\left[\widehat{v}^*_0(q)\cos\left(\omega(q)\,t\right)
+\frac{\widehat{v}^*_1(q)}{\omega(q)}\sin\left(\omega(q)\,t\right)\right]dx\,dy\,d\xi\,dq
\nonumber\\
=&2\pi
\int_{-\delta}^{\delta}\frac{1}{|y|^{1+2\alpha}}
\int_{\mathbb{R}} \int_{\mathbb{R}} \delta(\xi-q)
(1-\cos( \xi y)-i\sin( \xi y))(1-\cos (qy)+i\sin( qy))
\nonumber\\&\times
\left[\widehat{v_0}(\xi)\cos\left(\omega(\xi)\,t\right)
+\frac{\widehat{v_1}(\xi)}{\omega(\xi)}\sin\left(\omega(\xi)\,t\right)\right]
\left[\widehat{v}^*_0(q)\cos\left(\omega(q)\,t\right)
+\frac{\widehat{v}^*_1(q)}{\omega(q)}\sin\left(\omega(q)\,t\right)\right]dy\,d\xi\,dq
\nonumber\\
=&2\pi
\int_{-\delta}^{\delta}\frac{1}{|y|^{1+2\alpha}}
\int_{\mathbb{R}}
\left[(1-\cos (\xi y))^2+\sin^2 \xi y\right]
\nonumber\\&\times
\left[\widehat{v_0}(\xi)\cos\left(\omega(\xi)\,t\right)
+\frac{\widehat{v_1}(\xi)}{\omega(\xi)}\sin\left(\omega(\xi)\,t\right)\right]
\left[\widehat{v}^*_0(\xi)\cos\left(\omega(\xi)\,t\right)
+\frac{\widehat{v}^*_1(\xi)}{\omega(\xi)}\sin\left(\omega(\xi)\,t\right)\right]dy\,d\xi
\nonumber\\
=&
4\pi\int_{\mathbb{R}} \int_{-\delta}^{\delta}\frac{1-\cos( \xi y)}{|y|^{1+2\alpha}}
\left\{|\widehat{v_0}(\xi)|^2\cos^2\left(\omega(\xi)\,t\right)
+\frac{|\widehat{v_1}(\xi)|^2}{\omega^2(\xi)}\sin^2\left(\omega(\xi)\,t\right)
\right.\nonumber\\
&\left.
\qquad\qquad\qquad
+\frac{\cos\left( \omega(\xi)t \right)\sin\left( \omega(\xi)t \right)}{\omega(\xi)}\left[\widehat{v_0}(\xi)\widehat{v}^*_1(\xi)
+\widehat{v_1}(\xi)\widehat{v}^*_0(\xi)\right]\right\}d\xi\,dy
\nonumber\\
=&
4\pi\,\delta^{-2\,\alpha}\int_{\mathbb{R}}\int_{-1}^{1}\frac{1-\cos (\xi\delta z)}{|z|^{1+2\alpha}}
\left\{|\widehat{v_0}(\xi)|^2\cos^2\left(\omega(\xi)\,t\right)
+\frac{|\widehat{v_1}(\xi)|^2}{\omega^2(\xi)}\sin^2\left(\omega(\xi)\,t\right)
\right.\nonumber\\
&\left.\qquad\qquad\qquad
+\frac{\cos\left( \omega(\xi)t \right)\sin\left( \omega(\xi)t \right)}{\omega(\xi)}\left[\widehat{v_0}(\xi)\widehat{v}^*_1(\xi)
+\widehat{v_1}(\xi)\widehat{v}^*_0(\xi)\right]\right\}d\xi\,dz
\nonumber\\
=&
\frac{2\pi\rho}{\kappa}\int_{\mathbb{R}} \omega^2(\xi)
\left\{|\widehat{v_0}(\xi)|^2\cos^2\left(\omega(\xi)\,t\right)
+\frac{|\widehat{v_1}(\xi)|^2}{\omega^2(\xi)}\sin^2\left(\omega(\xi)\,t\right)
\right.\nonumber\\
&\left. \qquad\qquad\qquad
+\frac{\cos\left( \omega(\xi)t \right)\sin\left( \omega(\xi)t \right)}{\omega(\xi)}\left[\widehat{v_0}(\xi)\widehat{v}^*_1(\xi)
+\widehat{v_1}(\xi)\widehat{v}^*_0(\xi)\right]\right\}d\xi
\nonumber\\
=&
\frac{2\pi\rho}{\kappa}\int_{\mathbb{R}} 
\left\{\omega^2(\xi)|\widehat{v_0}(\xi)|^2\cos^2\left(\omega(\xi)\,t\right)
+|\widehat{v_1}(\xi)|^2\sin^2\left(\omega(\xi)\,t\right)
\right.\nonumber\\
&\left. \qquad\qquad\qquad
+\omega(\xi)\cos\left( \omega(\xi)t \right)\sin\left( \omega(\xi)t \right)\left[\widehat{v_0}(\xi)\widehat{v}^*_1(\xi)
+\widehat{v_1}(\xi)\widehat{v}^*_0(\xi)\right]\right\}d\xi\,,
\label{eq:pot_en}
\end{align}
where we have performed the change of variable~$z:=\delta y$ and used \eqref{eq:disp_rel}. Thus,
recalling the bound on~$\omega$ in~\eqref{DFCH} we obtain~\eqref{ATT:004}, as desired.

By taking the limit as~$t\to0^+$ in~\eqref{ATT:004}, we obtain the first
identity in~\eqref{ATT:004:CON}. Also, the second
identity in~\eqref{ATT:004:CON} follows by~\eqref{eq:disp_rel}, the translation
invariance of the norm and
Plancherel Theorem; more precisely
\begin{eqnarray*}&&
\int_{\mathbb{R}}\int_{-\delta}^{\delta}\frac{|v_0(x)-v_0(x-y)|^2}{|y|^{1+2\alpha}}\,dx\,dy
=
\int_{\mathbb{R}}\int_{-\delta}^{\delta}\frac{v_0^2(x)+v_0^2(x-y)-2v_0(x)v_0(x-y)}{|y|^{1+2\alpha}}\,dx\,dy\\
&&\qquad
=2\int_{\mathbb{R}}\int_{-\delta}^{\delta}\frac{v_0^2(x)-v_0(x)v_0(x-y)}{|y|^{1+2\alpha}}\,dx\,dy
=2\int_{\mathbb{R}}\int_{-\delta}^{\delta}\frac{|\widehat{v_0}(\xi)|^2-
\widehat{v_0}(\xi)\,
\widehat{v_0}^*(\xi)\,e^{iy\xi}}{|y|^{1+2\alpha}}\,d\xi\, dy\\&&\qquad
=2\int_{\mathbb{R}}\int_{-\delta}^{\delta}\frac{|\widehat{v_0}(\xi)|^2(1-
e^{iy\xi})}{|y|^{1+2\alpha}}\,d\xi \,dy=
\int_{\mathbb{R}}\int_{-\delta}^{\delta}\frac{|\widehat{v_0}(\xi)|^2(2-
e^{-iy\xi}
-e^{iy\xi})}{|y|^{1+2\alpha}}\,d\xi\, dy\\&&\qquad=2\int_{\mathbb{R}}\int_{-\delta}^{\delta}\frac{|\widehat{v_0}(\xi)|^2\big(1-
\cos(y\xi)\big)}{|y|^{1+2\alpha}}\,d\xi\, dy
=
\frac{\rho}{\kappa}\int_{\mathbb{R}} 
\omega^2(\xi)|\widehat{v_0}(\xi)|^2\,d\xi.\end{eqnarray*}

Besides, for all~$y\in(-\delta,\delta)$,
\begin{equation}\label{M:JMNS08765rgf}
\begin{split}&
|2u(t,x)-u(t,x+y)-u(t,x-y)|=\left|
\int_0^{y} u_x(t,x+\theta)\,d\theta-\int_0^{y} u_x(t,x-\theta)\,d\theta
\right|\\&\qquad=
\left|
\int_0^{y}\int_{-\theta}^{\theta} u_{xx}(t,x+\eta)\,d\theta\,d\eta
\right|\le\sup_{\zeta\in(-\delta,\delta)}|u_{xx}(t,x+\zeta)|\,y^2.
\end{split}\end{equation}
Also, by Theorem~\ref{TH4.1},
\begin{eqnarray*}&&
(1+|x|)^3 \sup_{\zeta\in(-\delta,\delta)}|u_{xx}(t,x+\zeta)|\le
\sup_{\zeta\in(-\delta,\delta)}(1+|x+\zeta|+\delta)^3 |u_{xx}(t,x+\zeta)|\\&&\qquad
\le(1+\delta)^3
\sup_{\zeta\in(-\delta,\delta)}(1+|x+\zeta|)^3 |u_{xx}(t,x+\zeta)|\le C(1+\delta)^3,
\end{eqnarray*}
for some~$C>0$ independent of~$x$.
This and~\eqref{M:JMNS08765rgf} lead to
\begin{eqnarray*}
&&
\int_{\mathbb{R}}\int_{-\delta}^{\delta}\frac{|x|\,|2u(t,x)-u(t,x+y)-u(t,x-y)|}{|y|^{1+2\alpha}}\,dx\,dy
\le C(1+\delta)^3
\int_{\mathbb{R}}\int_{-\delta}^{\delta}\frac{|x|\,|y|^{1-2\alpha}}{(1+|x|)^3}\,dx\,dy\\&&\qquad\qquad=
\frac{C(1+\delta)^3\delta^{2-2\alpha}}{1-\alpha}
\int_{\mathbb{R}} \frac{|x|}{(1+|x|)^3}dx,
\end{eqnarray*}
which is finite, thus proving~\eqref{ATT:004BIS}.

Now we use again~\eqref{UTDIS}, combined with the bound on~$\omega$ obtained in~\eqref{DFCH},
to see that
\begin{equation}\label{ULAP1C}
\begin{split} &\left|
\int_\R x u_t(t,x)\,dx-\int_\R x v_1(x)\,dx\right|\\
=&\left|\int_\R \int_{\mathbb{R}}\,x e^{-i\xi x}\left[-\omega(\xi)\widehat{v_0}(\xi)\sin\left(\omega(\xi)\,t\right)
+\widehat{v_1}(\xi)\cos\left(\omega(\xi)\,t\right)\right]\,d\xi\,dx-\int_\R x v_1(x)\,dx\right|\\
\le& 
\left|\int_\R \int_{\mathbb{R}}\,\frac{d}{d\xi}(e^{-i\xi x})\, \omega(\xi)\widehat{v_0}(\xi)\sin\left(\omega(\xi)\,t\right)
\,d\xi\,dx\right|\\&\qquad
+\left|
\int_\R \int_{\mathbb{R}}\,x e^{-i\xi x}\widehat{v_1}(\xi)\cos\left(\omega(\xi)\,t\right)\,d\xi\,dx-
\int_\R \int_{\mathbb{R}}\,x e^{-i\xi x}\widehat{v_1}(\xi)\,d\xi\,dx\right|\\=&
\left|\int_\R \int_{\mathbb{R}}\, e^{-i\xi x}\, \frac{d\Theta_1}{d\xi}(t,\xi)
\,d\xi\,dx\right|
+\left|
\int_\R \int_{\mathbb{R}}\,\frac{d}{d\xi}( e^{-i\xi x})\,\widehat{v_1}(\xi)\Big(1-\cos\left(\omega(\xi)\,t\right)\Big)\,d\xi\,dx\right|\\=&
\left|\int_\R \int_{\mathbb{R}}\, e^{-i\xi x}\, \frac{d\Theta_1}{d\xi}(t,\xi)
\,d\xi\,dx\right|
+\left|
\int_\R \int_{\mathbb{R}}\, e^{-i\xi x}\,\frac{d\Theta_2}{d\xi}(t,\xi)\,d\xi\,dx\right|,
\end{split}\end{equation}
where
$$ \Theta_1(t,\xi):=
\omega(\xi)\,\widehat{v_0}(\xi)\sin\left(\omega(\xi)\,t\right)
\qquad{\mbox{and}}\qquad
\Theta_2(t,\xi):=
\widehat{v_1}(\xi)\Big(1-\cos\left(\omega(\xi)\,t\right)\Big).$$
Recalling~\eqref{rsinr},
it is also convenient to consider the function
\begin{equation}\label{SJM:EXT}[0,+\infty)\ni r\mapsto S(r):=\sqrt{r}\sin\sqrt{ r}=
\sum_{j=0}^{+\infty}\frac{(-1)^j r^{j+1}}{(2j+1)!}\end{equation}
We notice that~$S$ can be extended to an analytic function defined
for all~$r\in\R$ by using the series expansion in the right hand side of~\eqref{SJM:EXT}.

Also, for every~$\ell\in\N$ and~$r>0$,
\begin{equation*} |S^{(\ell)}(r)|\le C_\ell\,(1+\sqrt{r}),\end{equation*}
for suitable~$C_\ell>0$.
Indeed, if~$r\in[0,1]$ this claim follows by taking derivatives
in the series expansion in~\eqref{SJM:EXT}, and if~$r\ge1$
it follows from~\eqref{LEINS} and the General Leibniz Rule.

As a result, using~\eqref{QUAFDRA}, \eqref{BOUxDER-PEROME}
and  the Fa\`a di Bruno's Formula, for every~$m\in\N$,
\begin{equation*}
\left|\frac{d^m}{d\xi^m}\big[
\omega(\xi)t\,\sin\left(\omega(\xi)\,t\right)\big]\right|=\left|
\frac{d^m}{d\xi^m} S\big(
\varpi(\xi)t^2\big)\right|\le C^\star_m \,t^2\,(1+|\xi|)^{C^\star_m},
\end{equation*}
for a suitable~$C^\star_m>0$, and consequently,
using again the General Leibniz Rule and the fact that~$
\widehat{v_0}$ belongs to the Schwartz space,
\begin{equation*}
\left|\frac{d^m}{d\xi^m}\Theta_1(t,\xi)\right|\le \frac{C^\#_m \,t}{1+\xi^2},
\end{equation*}
for some~$C^\#_m>0$.

Integrating twice by parts in the variable~$\xi$ when~$|x|>1$, we thus find that
\begin{equation}\label{67CISI}\begin{split}&
\left|\int_\R \int_{\mathbb{R}}\, e^{-i\xi x}\, \frac{d\Theta_1}{d\xi}(t,\xi)
\,d\xi\,dx\right|\\&\quad\le
\left|\int_{\R\setminus[-1,1]} \int_{\mathbb{R}}\, e^{-i\xi x}\, \frac{d\Theta_1}{d\xi}(t,\xi)
\,dx\,d\xi\right|+
\left|\int_{[-1,1]} \int_{\mathbb{R}}\, e^{-i\xi x}\, \frac{d\Theta_1}{d\xi}(t,\xi)
\,dx\,d\xi\right|\\&\quad\le 
\int_{\R\setminus[-1,1]} \int_{\mathbb{R}}\, \frac{C^\#_1 t}{1+\xi^2}
\,dx\,d\xi +
\left|\int_{[-1,1]} \int_{\mathbb{R}}\,\frac{e^{-i\xi x}}{x^2}\,
\frac{d^3\Theta_1}{d\xi^3}(t,\xi)
\,dx\,d\xi\right|\\&\quad\le 
O(t) +
\int_{[-1,1]} \int_{\mathbb{R}}\,\frac{C^\#_3 t}{x^2\,(1+\xi^2)}
\,dx\,d\xi =O(t).\end{split}
\end{equation}
Similarly,
$$ \left|\int_\R \int_{\mathbb{R}}\, e^{-i\xi x}\, \frac{d\Theta_2}{d\xi}(t,\xi)
\,d\xi\,dx\right|=O(t).$$
Plugging this and~\eqref{67CISI} into~\eqref{ULAP1C}
we obtain~\eqref{ATT:005} as desired.
\end{proof}

We conclude this section with the following  decay properties of the solutions of~\eqref{eq:linear_per}.

\begin{theorem}
\label{fd1}
Let $u(t,x)$ be a solution of~\eqref{eq:linear_per} according to \eqref{eq:sol}.
Then 
for all $t>0$ the following inequality holds true.
\begin{equation}
\|u(t,\cdot)\|_{L^2(\mathbb{R})}\le\norm{v_0}_{L^2(\R)}+
\sqrt{2\pi}
\norm{ \widehat{v_1}
\min\left\{ t,\,\frac{1}{\omega}\right\}
}_{L^2(\R)}
\,.\label{FWWO}
\end{equation}
\end{theorem}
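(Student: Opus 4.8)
The plan is to transfer the whole estimate to the Fourier side, where the solution is diagonalized by~\eqref{eq:sol} and the claimed bound becomes a pointwise multiplier estimate followed by Plancherel. First I would observe that, by the inversion formula of footnote~\ref{FOUCO}, formula~\eqref{eq:sol} of Theorem~\ref{linsol} says exactly that the Fourier transform (in the paper's convention) of~$u(t,\cdot)$ is
$$\widehat{u(t,\cdot)}(\xi)=\widehat{v_0}(\xi)\cos\big(\omega(\xi)\,t\big)+\frac{\widehat{v_1}(\xi)}{\omega(\xi)}\sin\big(\omega(\xi)\,t\big)=:g(\xi).$$
Since~$|\cos(\omega(\xi)t)|\le1$ and, for~$t>0$, $\big|\sin(\omega(\xi)t)/\omega(\xi)\big|\le t$ (the quotient being read in the limiting sense~$t$ at~$\xi=0$, where~$\omega$ vanishes — a null set anyway), and since~$\widehat{v_0},\widehat{v_1}\in\mathcal S(\R)\subset L^2(\R)$, we have~$g\in L^2(\R)$; alternatively, $u(t,\cdot)\in\mathcal S(\R)$ by Theorem~\ref{TH4.1}. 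In the non-unitary convention of footnote~\ref{FOUCO}, Plancherel's identity reads~$\|v\|_{L^2(\R)}=\sqrt{2\pi}\,\|\widehat v\|_{L^2(\R)}$, hence~$\|u(t,\cdot)\|_{L^2(\R)}=\sqrt{2\pi}\,\|g\|_{L^2(\R)}$.

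Next I would apply the triangle inequality in~$L^2(\R,d\xi)$ to write
$$\|g\|_{L^2(\R)}\le\big\|\widehat{v_0}\,\cos(\omega t)\big\|_{L^2(\R)}+\left\|\frac{\widehat{v_1}}{\omega}\,\sin(\omega t)\right\|_{L^2(\R)}.$$
For the first term, $|\cos(\omega(\xi)t)|\le1$ gives~$\big\|\widehat{v_0}\cos(\omega t)\big\|_{L^2(\R)}\le\|\widehat{v_0}\|_{L^2(\R)}=\tfrac1{\sqrt{2\pi}}\|v_0\|_{L^2(\R)}$ by Plancherel once more. For the second term, the elementary inequality~$|\sin s|\le\min\{|s|,1\}$ applied with~$s:=\omega(\xi)t$ yields, for~$t>0$ and~$\omega(\xi)>0$,
$$\left|\frac{\sin(\omega(\xi)t)}{\omega(\xi)}\right|\le\frac{\min\{\omega(\xi)\,t,\,1\}}{\omega(\xi)}=\min\left\{t,\,\frac1{\omega(\xi)}\right\},$$
so that~$\big\|\tfrac{\widehat{v_1}}{\omega}\sin(\omega t)\big\|_{L^2(\R)}\le\big\|\widehat{v_1}\,\min\{t,1/\omega\}\big\|_{L^2(\R)}$.

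Finally, combining the three bounds and multiplying through by~$\sqrt{2\pi}$ gives
$$\|u(t,\cdot)\|_{L^2(\R)}=\sqrt{2\pi}\,\|g\|_{L^2(\R)}\le\sqrt{2\pi}\left(\frac1{\sqrt{2\pi}}\,\|v_0\|_{L^2(\R)}+\Big\|\widehat{v_1}\,\min\{t,1/\omega\}\Big\|_{L^2(\R)}\right),$$
which is precisely~\eqref{FWWO}. I do not expect any serious obstacle: the only points requiring a modicum of care are keeping track of the normalization constant of the non-unitary Fourier transform — so that the two~$\sqrt{2\pi}$ factors cancel in front of~$\|v_0\|_{L^2}$ while one survives in front of the~$\widehat{v_1}$ term — and observing that the apparent singularity of~$\sin(\omega(\xi)t)/\omega(\xi)$ at~$\xi=0$ is spurious, the quantity being bounded by~$t$ there, with~$\{0\}$ carrying no mass for the $L^2$-norm in any case.
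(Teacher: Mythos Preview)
Your proposal is correct and follows essentially the same approach as the paper: pass to Fourier via Plancherel using the representation~\eqref{eq:sol}, bound the sine term by~$|\sin(\omega t)/\omega|\le\min\{t,1/\omega\}$, and split with the triangle inequality in~$L^2$, keeping track of the~$\sqrt{2\pi}$ from the non-unitary convention. The only cosmetic difference is that the paper first bounds the integrand pointwise by~$(|\widehat{v_0}|+|\widehat{v_1}|\min\{t,1/\omega\})^2$ and then applies the~$L^2$ triangle inequality, whereas you apply the triangle inequality directly to the two summands of~$g$; the two orderings are equivalent.
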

\begin{proof}
 By \eqref{eq:sol} and Plancherel Theorem,
\begin{equation}\begin{split}
\frac{
\|u(t,\cdot)\|^2_{L^2(\mathbb{R})}}{2\pi}
=&\int_\R
\left| \widehat{v_0}(\xi)\cos\left(\omega(\xi)\,t\right)
+\frac{\widehat{v_1}(\xi)}{\omega(\xi)}\sin\left(\omega(\xi)\,t\right)\right|^2\,d\xi\\
\le&
\int_\R
\left( |\widehat{v_0}(\xi)|
+\frac{|\widehat{v_1}(\xi)|}{\omega(\xi)}|\sin\left(\omega(\xi)\,t\right)|\right)^2\,d\xi\,.\label{223}
\end{split}\end{equation}
Thus, since, for all~$r\in\R$,
\begin{equation}\label{SIN-UT} \left|\frac{\sin r}{r}\right|\le\min\left\{1,\frac{1}{|r|}\right\},\end{equation}
we deduce from~\eqref{223} that
\begin{equation*}\frac{
\|u(t,\cdot)\|^2_{L^2(\mathbb{R})}}{2\pi}\,\le\,
\int_\R
\left( |\widehat{v_0}(\xi)|
+t\,|\widehat{v_1}(\xi)|
\min\left\{ 1,\,\frac{1}{\omega(\xi)t}\right\}
\right)^2\,d\xi =
\norm{|\widehat{v_0}|
+|\widehat{v_1}|
\min\left\{ t,\,\frac{1}{\omega}\right\}
}_{L^2(\R)}^2
\end{equation*}
and therefore
\begin{equation*}\frac{
\|u(t,\cdot)\|_{L^2(\mathbb{R})}}{\sqrt{2\pi}}\,\le\,
\norm{|\widehat{v_0}|
+|\widehat{v_1}|
\min\left\{ t,\,\frac{1}{\omega}\right\}
}_{L^2(\R)}\le\,
\norm{\widehat{v_0}}_{L^2(\R)}
+\norm{ \widehat{v_1}
\min\left\{ t,\,\frac{1}{\omega}\right\}
}_{L^2(\R)},
\end{equation*}
from which we obtain~\eqref{FWWO}
using again Plancherel Theorem.
\end{proof}

\begin{theorem}
\label{fd2}
Let $u(t,x)$ be a solution of the \eqref{eq:linear_per} according
to \eqref{eq:sol}.
Then, for every 
$(t,x)\in \mathbb{R}_+\times\mathbb{R}$,
\begin{equation}\label{mTSGBcAHSNasSOSDKc-amd}\begin{split}
|u(t,x)|\,\le\,& \min\Bigg\{
\norm{\widehat{v_0}}_{L^1(\R)}+\norm{
\widehat{v_1}\min\left\{t,\frac1\omega\right\}}_{L^1(\R)}
,\\&\frac{1+|x|}{t}\left(
\norm{\frac{\widehat{v_0}}{\omega'}}_{W^{1,1}((-\infty,0))}+
\norm{\frac{\widehat{v_0}}{\omega'}}_{W^{1,1}((0,+\infty))}+
\norm{\frac{\widehat{v_1}}{\omega\omega'}}_{W^{1,1}((-\infty,0))}+
\norm{\frac{\widehat{v_1}}{\omega\omega'}}_{W^{1,1}((0,+\infty))}
\right)\Bigg\}\,.\end{split}
\end{equation}
\label{th:decay}
\end{theorem}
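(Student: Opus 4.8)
The plan is to bound $|u(t,x)|$ in two independent ways and then take the minimum. The first bound is the easy one: starting from the representation formula~\eqref{eq:sol}, one estimates the integrand pointwise by the absolute values of its coefficients, uses the inequality~\eqref{SIN-UT} in the form $\left|\frac{\sin(\omega t)}{\omega}\right|\le\min\{t,\frac1\omega\}$, and observes that $|\cos(\omega t)|\le 1$. This immediately yields
$$
|u(t,x)|\le\int_\R\left(|\widehat{v_0}(\xi)|+|\widehat{v_1}(\xi)|\min\left\{t,\tfrac1{\omega(\xi)}\right\}\right)d\xi
=\norm{\widehat{v_0}}_{L^1(\R)}+\norm{\widehat{v_1}\min\left\{t,\tfrac1\omega\right\}}_{L^1(\R)},
$$
which is the first entry of the minimum. (Here I should check that $\widehat{v_1}/\omega\in L^1$, which follows from $\widehat{v_1}\in\mathcal S$ together with the lower bound $\omega(\xi)\sim|\xi|$ near the origin implicit in~\eqref{LS:XD}; away from the origin $\omega$ is bounded below by a positive constant.)

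The second bound is the dispersive one and is where the real work lies. I would split the $\xi$-integral in~\eqref{eq:sol} into $\xi<0$ and $\xi>0$, on each of which $\omega$ is strictly monotone (by Theorem~\ref{OMEPI-P}, $\omega'(\xi)=\pm\frac{\sqrt\kappa\delta^{1-\alpha}}{\sqrt{(1-\alpha)\rho}}+o(1)$ as $\xi\to0^\pm$ and $|\xi|^{1-\alpha}\omega'(\xi)$ has a nonzero limit at $\pm\infty$, so $\omega'$ does not vanish). On the branch $\xi>0$, write the two pieces of~\eqref{eq:sol} as oscillatory integrals $\int_0^\infty e^{-i\xi x}e^{\pm i\omega(\xi)t}A(\xi)\,d\xi$ with $A=\frac12\widehat{v_0}$ and $A=\mp\frac{\widehat{v_1}}{2i\omega}$ (decomposing $\cos$ and $\sin$ into exponentials). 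The key is a non-stationary-phase / integration-by-parts argument exploiting the time oscillation: use the identity
$$
e^{\pm i\omega(\xi)t}=\frac1{\pm i t\,\omega'(\xi)}\,\frac{d}{d\xi}e^{\pm i\omega(\xi)t}
$$
to integrate by parts once in $\xi$, which produces a factor $\frac1t$ and moves a $\frac{d}{d\xi}$ onto $\frac{A(\xi)}{\omega'(\xi)}e^{-i\xi x}$. The term where the derivative hits $e^{-i\xi x}$ contributes a factor $|x|$, which is the source of the $\frac{1+|x|}{t}$ prefactor; the remaining terms contribute the $\frac1t$ with no $x$-growth, accounting for the $\frac1t$ (the "$1$" in $1+|x|$). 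Collecting the boundary term at $\xi=0^+$ and the bulk term, and bounding everything in absolute value, yields a bound by $\frac{1+|x|}t$ times $\norm{\frac{\widehat{v_0}}{\omega'}}_{W^{1,1}((0,\infty))}+\norm{\frac{\widehat{v_1}}{\omega\omega'}}_{W^{1,1}((0,\infty))}$; the branch $\xi<0$ gives the analogous $(-\infty,0)$ terms, and summing the four contributions produces exactly the second entry of the minimum in~\eqref{mTSGBcAHSNasSOSDKc-amd}.

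The main obstacle is the behavior at $\xi=0$, where $\omega'$ has a jump discontinuity (so the two one-sided branches genuinely must be treated separately, and one cannot integrate by parts across the origin) and where $\frac1\omega$ and $\frac1{\omega\omega'}$ are singular. One must verify that the relevant $W^{1,1}$ norms on each half-line are actually finite: near $0$ one has $\omega(\xi)\sim c|\xi|$ and $\omega'(\xi)\to\pm c$ with $\omega''(\xi)=O(|\xi|)$ by~\eqref{third}, so $\frac{\widehat{v_0}}{\omega'}$ and its derivative are bounded near $0$, while $\frac{\widehat{v_1}}{\omega\omega'}\sim\frac{\widehat{v_1}(0)}{c^2|\xi|}$ need not be integrable unless $\widehat{v_1}(0)=0$ — so the inequality as stated is understood to hold with a possibly infinite right-hand side, and is only informative when these norms are finite (e.g. when $v_1$ has mean zero). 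At infinity, $\omega'(\xi)\sim c|\xi|^{\alpha-1}$ so $\frac1{\omega'}$ grows polynomially, but $\widehat{v_0},\widehat{v_1}\in\mathcal S$ kill this and guarantee integrability of the tails; the boundary terms at $\pm\infty$ vanish for the same reason. I would handle these integrability checks first, then carry out the single integration by parts on each branch, which is otherwise routine.
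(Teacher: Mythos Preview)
Your proposal is correct and follows essentially the same strategy as the paper. The only cosmetic difference is that the paper does not decompose $\cos(\omega t)$ and $\sin(\omega t)$ into complex exponentials but instead uses the real identities
\[
\cos(\omega(\xi)t)=\frac{1}{\omega'(\xi)t}\,\partial_\xi\sin(\omega(\xi)t),
\qquad
\sin(\omega(\xi)t)=-\frac{1}{\omega'(\xi)t}\,\partial_\xi\cos(\omega(\xi)t),
\]
and then integrates by parts on $(-\infty,0)$ and $(0,+\infty)$ separately, exactly as you outline; the derivative landing on $e^{-i\xi x}$ produces the factor~$|x|$ and the derivative landing on $\widehat{v_j}/\omega'$ (respectively $\widehat{v_1}/(\omega\omega')$) produces the $W^{1,1}$ norms. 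Your remarks on the singularity at $\xi=0$ (forcing the one-sided treatment and rendering the bound vacuous unless, e.g., $\widehat{v_1}(0)=0$) are in fact more explicit than the paper, which relegates the finiteness issue to a remark after the proof.
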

\begin{proof} {F}rom
\begin{equation}\label{JS:PKS-PA}
|u(t,x)|=\left|\int_{\mathbb{R}} e^{-i\xi x}\left[\widehat{v_0}(\xi)\cos\left(\omega(\xi)\,t\right)
+\frac{\widehat{v_1}(\xi)}{\omega(\xi)}\sin\left(\omega(\xi)\,t\right)\right]d\xi\right|
\end{equation}
and~\eqref{SIN-UT} we deduce that
\begin{equation}\label{KMS:0987654kjhgfcdx3945}
|u(t,x)|\le
\int_{\mathbb{R}} 
|\widehat{v_0}(\xi)|
+ |\widehat{v_1}(\xi)|
\min\left\{t,\frac{1}{
\omega(\xi) }\right\}
d\xi\,.
\end{equation}
Another consequence of~\eqref{JS:PKS-PA} is that
\begin{equation}\label{OKJNppKASPmdikPAmpqksaA}
|u(t,x)|
=\left|\int_{\mathbb{R}} e^{-i\xi x}\left[\frac{\widehat{v_0}(\xi)}{\omega'(\xi)t}\partial_\xi\sin\left(\omega(\xi)\,t\right)
-\frac{\widehat{v_1}(\xi)}{\omega(\xi)\,\omega'(\xi)t}\partial_\xi\cos\left(\omega(\xi)\,t\right)\right]d\xi\right|.\end{equation}
Moreover, recalling~\eqref{first} and~\eqref{second}, we see that
\begin{equation*}
\begin{split}
&\int_{\mathbb{R}} e^{-i\xi x}
\frac{\widehat{v_0}(\xi)}{\omega'(\xi)t}\partial_\xi\sin\left(\omega(\xi)\,t\right)d\xi\\
=\,&
\int_{-\infty}^0 e^{-i\xi x}
\frac{\widehat{v_0}(\xi)}{\omega'(\xi)t}\partial_\xi\sin\left(\omega(\xi)\,t\right)d\xi+\int_{0}^{+\infty} e^{-i\xi x}
\frac{\widehat{v_0}(\xi)}{\omega'(\xi)t}\partial_\xi\sin\left(\omega(\xi)\,t\right)d\xi
\\=\,&
\frac{ix}t\int_{-\infty}^0 e^{-i\xi x}
\frac{\widehat{v_0}(\xi)}{\omega'(\xi)} \sin\left(\omega(\xi)\,t\right)d\xi+\frac{ix}t
\int_{0}^{+\infty} e^{-i\xi x}
\frac{\widehat{v_0}(\xi)}{\omega'(\xi)} \sin\left(\omega(\xi)\,t\right)d\xi\\
&\qquad-\frac1t\int_{-\infty}^0 e^{-i\xi x}\left(
\frac{\widehat{v_0}(\xi)}{\omega'(\xi) }\right)' \sin\left(\omega(\xi)\,t\right)d\xi-
\frac1t\int_{0}^{+\infty} e^{-i\xi x}\left(
\frac{\widehat{v_0}(\xi)}{\omega'(\xi) } \right)'\sin\left(\omega(\xi)\,t\right)d\xi
\end{split}
\end{equation*}
and therefore
\begin{equation}\label{MS:0987654jhgfd9989-34}
\left|\int_{\mathbb{R}} e^{-i\xi x}
\frac{\widehat{v_0}(\xi)}{\omega'(\xi)t}\partial_\xi\sin\left(\omega(\xi)\,t\right)\right|
\le
\frac{1+|x|}{t}\left(
\norm{\frac{\widehat{v_0}}{\omega'}}_{W^{1,1}((-\infty,0))}+
\norm{\frac{\widehat{v_0}}{\omega'}}_{W^{1,1}((0,+\infty))}\right).
\end{equation}
Additionally,
\begin{eqnarray*}
&&
\int_{\mathbb{R}} e^{-i\xi x}
\frac{\widehat{v_1}(\xi)}{\omega(\xi)\,\omega'(\xi)t}\partial_\xi\cos
\left(\omega(\xi)\,t\right)d\xi
\\&=&
\int_{-\infty}^0 e^{-i\xi x}
\frac{\widehat{v_1}(\xi)}{\omega(\xi)\,\omega'(\xi)t}\partial_\xi\cos
\left(\omega(\xi)\,t\right)d\xi
+\int_0^{+\infty} e^{-i\xi x}
\frac{\widehat{v_1}(\xi)}{\omega(\xi)\,\omega'(\xi)t}\partial_\xi\cos
\left(\omega(\xi)\,t\right)d\xi
\\&=&
\frac{ix}{t}
\int_{-\infty}^0 e^{-i\xi x}
\frac{\widehat{v_1}(\xi)}{\omega(\xi)\,\omega'(\xi)} \cos
\left(\omega(\xi)\,t\right)d\xi
+\frac{ix}{t}
\int_0^{+\infty} e^{-i\xi x}
\frac{\widehat{v_1}(\xi)}{\omega(\xi)\,\omega'(\xi)} \cos
\left(\omega(\xi)\,t\right)d\xi
\\&&\quad-
\int_{-\infty}^0 e^{-i\xi x}\left(
\frac{\widehat{v_1}(\xi)}{\omega(\xi)\,\omega'(\xi)}\right)'
\cos
\left(\omega(\xi)\,t\right)d\xi
+\int_0^{+\infty} e^{-i\xi x}\left(
\frac{\widehat{v_1}(\xi)}{\omega(\xi)\,\omega'(\xi) }\right)'
\cos
\left(\omega(\xi)\,t\right)d\xi
\end{eqnarray*}
and, as a consequence,
$$\left|
\int_{\mathbb{R}} e^{-i\xi x}
\frac{\widehat{v_1}(\xi)}{\omega(\xi)\,\omega'(\xi)t}\partial_\xi\cos
\left(\omega(\xi)\,t\right)d\xi\right|\le\frac{1+|x|}t\left(
\norm{\frac{\widehat{v_1}}{\omega\omega'}}_{W^{1,1}((-\infty,0))}+
\norm{\frac{\widehat{v_1}}{\omega\omega'}}_{W^{1,1}((0,+\infty))}\right)
.$$
Owing to~\eqref{OKJNppKASPmdikPAmpqksaA},
the latter estimate
and~\eqref{MS:0987654jhgfd9989-34} entail that
$$
|u(t,x)|\le\frac{1+|x|}{t}\left(
\norm{\frac{\widehat{v_0}}{\omega'}}_{W^{1,1}((-\infty,0))}+
\norm{\frac{\widehat{v_0}}{\omega'}}_{W^{1,1}((0,+\infty))}+
\norm{\frac{\widehat{v_1}}{\omega\omega'}}_{W^{1,1}((-\infty,0))}+
\norm{\frac{\widehat{v_1}}{\omega\omega'}}_{W^{1,1}((0,+\infty))}
\right)\,.
$$
Thus, recalling~\eqref{KMS:0987654kjhgfcdx3945},
we obtain the
desired result in~\eqref{mTSGBcAHSNasSOSDKc-amd}.
\end{proof}

\begin{remark}
{\rm We stress that
if~$v_0$ and~$v_1$ belong to the Schwartz space, then in particular
$$ 
\norm{\widehat{v_0}}_{L^1(\R)}+\norm{
\widehat{v_1}\min\left\{t,\frac1\omega\right\}}_{L^1(\R)}<+\infty$$
and consequently the right hand side of~\eqref{mTSGBcAHSNasSOSDKc-amd}
is finite.

Furthermore, in light of~\eqref{LS:XD}, \eqref{IMP}, \eqref{first},
\eqref{second}, \eqref{fourth}, \eqref{fifth} and~\eqref{sixth},
\begin{equation}\label{GTAF:00341} \frac{1}{\omega|\omega'|}+
\frac{1}{\omega^2|\omega'|}+\frac{|\omega''|}{\omega(\omega')^2}
=O\left(
\frac{1}{|\xi| }+
\frac{1}{\xi^2 }+1\right)=
O\left(\frac{1}{\xi^2 }\right)\end{equation}
as~$\xi\to0^\pm$ and
\begin{eqnarray*}&& \frac{1}{\omega|\omega'|}+
\frac{1}{\omega^2|\omega'|}+\frac{|\omega''|}{\omega(\omega')^2}
=O\left(\frac{1}{|\xi|^{2\alpha-1}}+
\frac{1}{|\xi|^{3\alpha-1}}+\frac{|\xi|^{\max\{2\alpha-1,0\}-1-\alpha}}{
|\xi|^{3\alpha-2}}\right)\\&&\qquad=
O\left(\frac{1}{|\xi|^{2\alpha-1}}+\frac{1
}{|\xi|^{\min\{2\alpha,
4\alpha-1\}}}\right)
=O\left(\frac{1}{|\xi|^{2\alpha-1}} \right)\end{eqnarray*}
as~$\xi\to\pm\infty$.

By~\eqref{GTAF:00341}, it follows that additional
assumptions (beside being in the Schwartz space)
must be taken on~$\widehat{v_1}$
if one wishes that
$$
\norm{\frac{\widehat{v_1}}{\omega\omega'}}_{W^{1,1}((-\infty,0))}+
\norm{\frac{\widehat{v_1}}{\omega\omega'}}_{W^{1,1}((0,+\infty))}
<+\infty.$$}
\end{remark}

\section{Conserved quantities}
\label{CON:SEC}

In this section, we investigate the conservation properties of equation~\eqref{eq:linear_per}.
For this, we introduce the following definition:

\begin{definition}\label{DE:ENMO}
Let $u(t,x)$ be a solution of  \eqref{eq:linear_per}. We define the following functionals:
\begin{align}
&\text{Energy}&\quad
\label{def:energy}
&E[u(t,\cdot)]:=\frac{\rho}{2}\| u_t(t,\cdot) \|^2_{L^2(\mathbb{R})}
+\frac{\kappa}{2}\int_{\mathbb{R}}\int_{-\delta}^{\delta}\frac{|u(t,x)-u(t,x-y)|^2}{|y|^{1+2\alpha}}\,dx\,dy\,,\\
&\text{Momentum}&\quad 
\label{def:momentum}
&P[u(t,\cdot)]:= \rho\int_{\mathbb{R}} u_t(t,x)dx,\\
&\text{Angular momentum}&\quad
&L[u(t,\cdot)] := \rho\int_{\mathbb{R}}x\,u_t(t,x)dx.\label{def:ang_momentum}
\end{align}
\end{definition}

We stress that the definition in~\eqref{def:energy} is well posed thanks to~\eqref{ATT:001}
and~\eqref{ATT:004}. Similarly, the definitions in~\eqref{def:momentum} and~\eqref{def:ang_momentum}
are well posed thanks to~\eqref{ATT:003}.

All the quantities introduced in Definition~\ref{DE:ENMO}
are conserved by the equation, according to the following result:

\begin{theorem}\label{CONSE}
If $u$ is a solution of problem~\eqref{eq:linear_per},  then
\begin{itemize}
\item[$i)$]  $u$ preserves  Energy according to~\eqref{def:energy} in Definition \ref{def:energy};
\item[$ii)$] $u$ preserves  Momentum according to~\eqref{def:momentum} in Definition \ref{def:energy}
\item[$iii)$] $u$  preserves Angular Momentum according to \eqref{def:ang_momentum}
in Definition \ref{def:energy}.
\end{itemize}
\end{theorem}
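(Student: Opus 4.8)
The plan is to carry every functional over to the Fourier side by means of the explicit formulas~\eqref{eq:sol} and~\eqref{UTDIS}, so that the time variable ends up entering only through combinations that are manifestly constant in~$t$; the integrability needed to make this reduction rigorous is exactly what Theorem~\ref{TH4.1} and the Corollary preceding Definition~\ref{DE:ENMO} provide.

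For the Energy I would not differentiate $E[u(t,\cdot)]$ in time at all, but simply insert the closed forms~\eqref{eq:kin_en} and~\eqref{eq:pot_en} into~\eqref{def:energy}. Summing $\frac\rho2\,\|u_t(t,\cdot)\|_{L^2(\R)}^2$ and $\frac\kappa2\int_\R\int_{-\delta}^\delta\frac{|u(t,x)-u(t,x-y)|^2}{|y|^{1+2\alpha}}\,dx\,dy$, the two ``diagonal'' contributions combine through $\sin^2(\omega t)+\cos^2(\omega t)=1$, while the two ``mixed'' contributions --- each a multiple of $\omega(\xi)\sin(\omega(\xi)t)\cos(\omega(\xi)t)\big(\widehat{v_0}(\xi)\widehat{v}^*_1(\xi)+\widehat{v}^*_0(\xi)\widehat{v_1}(\xi)\big)$ --- occur with opposite signs and cancel identically. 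One is left with
$$ E[u(t,\cdot)]=\pi\rho\int_\R\Big(\omega^2(\xi)\,|\widehat{v_0}(\xi)|^2+|\widehat{v_1}(\xi)|^2\Big)\,d\xi, $$
which is patently independent of~$t$ (and finite, by~\eqref{DFCH} and $v_0,v_1\in\mathcal S(\R)$). This gives~$i)$.

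For the Momentum and the Angular momentum I would read off from~\eqref{UTDIS} that, in the paper's Fourier convention, $\widehat{u_t}(t,\xi)=-\omega(\xi)\widehat{v_0}(\xi)\sin(\omega(\xi)t)+\widehat{v_1}(\xi)\cos(\omega(\xi)t)$. Since $u_t(t,\cdot)$ and $(1+|\cdot|)u_t(t,\cdot)$ belong to $L^1(\R)$ by~\eqref{ATT:003}, one may legitimately write $P[u(t,\cdot)]=\rho\int_\R u_t(t,x)\,dx=2\pi\rho\,\widehat{u_t}(t,0)=2\pi\rho\,\widehat{v_1}(0)=\rho\int_\R v_1(x)\,dx$, because $\omega(0)=0$ by~\eqref{eq:disp_rel}; this is independent of~$t$, hence~$ii)$. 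Likewise $L[u(t,\cdot)]=\rho\int_\R x\,u_t(t,x)\,dx=-2\pi i\rho\,\partial_\xi\widehat{u_t}(t,0)$, so it suffices to verify that $\partial_\xi\widehat{u_t}(t,\xi)|_{\xi=0}$ does not depend on~$t$. Here one must be careful because $\omega$ has a corner at the origin, but the functions $\xi\mapsto\omega(\xi)\sin(\omega(\xi)t)$ and $\xi\mapsto\cos(\omega(\xi)t)$ are nevertheless of class $C^1$ near $\xi=0$ with vanishing first derivative there: they equal $\frac1t\,S(\varpi(\xi)t^2)$ and $\cos\sqrt{\varpi(\xi)t^2}$, where $\varpi=\omega^2$ is real analytic with $\varpi(0)=\varpi'(0)=0$ by~\eqref{ENCHA1} and $S$ is the entire function in~\eqref{SJM:EXT}. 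Consequently the only surviving term in $\partial_\xi\widehat{u_t}(t,0)$ is $\partial_\xi\widehat{v_1}(0)$, so $L[u(t,\cdot)]=-2\pi i\rho\,\partial_\xi\widehat{v_1}(0)=\rho\int_\R x\,v_1(x)\,dx$, constant in~$t$, which proves~$iii)$.

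An equivalent, more structural route --- which I would record as a remark --- is to differentiate the functionals and use skew-adjointness. Setting $\mathcal{B}(f,g):=\int_\R\int_{-\delta}^\delta\frac{(f(x)-f(x-y))(g(x)-g(x-y))}{|y|^{1+2\alpha}}\,dx\,dy$, the translation invariance of Lebesgue measure together with the symmetrization in~\eqref{PVSTR8} gives $\int_\R g\,K(u)\,dx=-\kappa\,\mathcal{B}(g,u)$; combined with $\rho\,u_{tt}=K(u)$ this forces $\frac{d}{dt}E=\rho\int_\R u_t u_{tt}\,dx+\kappa\,\mathcal{B}(u,u_t)=-\kappa\,\mathcal{B}(u,u_t)+\kappa\,\mathcal{B}(u,u_t)=0$, while $\frac{d}{dt}P=\rho\int_\R u_{tt}\,dx=\int_\R K(u)\,dx=0$ and $\frac{d}{dt}L=\rho\int_\R x\,u_{tt}\,dx=\int_\R x\,K(u)\,dx=0$, because the changes of variables $x\mapsto x\mp y$ give $\int_\R\big(2u(x)-u(x+y)-u(x-y)\big)\,dx=0$ and $\int_\R x\big(2u(x)-u(x+y)-u(x-y)\big)\,dx=0$ for each fixed $y\in(-\delta,\delta)$; the conserved value of $L$ is then identified at $t=0$ via~\eqref{ATT:005}. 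In either route the only genuine difficulty is to license the manipulations --- interchanging $\frac{d}{dt}$ with $\int_\R dx$, using Fubini to decouple the $x$- and $y$-integrations in $\int_\R K(u)(t,x)\,dx$ and $\int_\R x\,K(u)(t,x)\,dx$ (for which the absolute integrability is furnished by~\eqref{ATT:002}, \eqref{ATT:003} and~\eqref{ATT:004BIS}, the summability near $y=0$ coming from the bound $|2u(t,x)-u(t,x+y)-u(t,x-y)|\le y^2\sup_{\zeta\in(-\delta,\delta)}|u_{xx}(t,x+\zeta)|$ of~\eqref{M:JMNS08765rgf}), and passing to Fourier variables in $E$ --- and this is precisely what the preparatory results supply; once they are in hand, the rest is bookkeeping.
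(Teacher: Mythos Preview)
Your proposal is correct and, for Energy and Momentum, follows the paper's proof essentially verbatim: the paper also plugs \eqref{eq:kin_en} and \eqref{eq:pot_en} into \eqref{def:energy} so that the diagonal terms recombine via $\sin^2+\cos^2=1$ and the mixed terms cancel, and it likewise reads off $P[u(t,\cdot)]=2\pi\rho\,\widehat{u_t}(t,0)=2\pi\rho\,\widehat{v_1}(0)$ from \eqref{UTDIS} and $\omega(0)=0$.

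The only genuine difference is in the Angular Momentum. The paper's primary argument is precisely your ``structural'' remark: it differentiates $L$ in time, uses $\rho u_{tt}=K(u)$, and kills $\int_\R xK(u)\,dx$ by translation invariance of the two inner $x$-integrals (with the absolute integrability supplied by \eqref{ATT:002}, \eqref{ATT:003}, \eqref{ATT:004BIS}), identifying the constant via \eqref{ATT:005}. Your primary route is instead a purely Fourier-side computation of $\partial_\xi\widehat{u_t}(t,0)$, exploiting that $\omega\sin(\omega t)$ and $\cos(\omega t)$ are analytic functions of $\varpi=\omega^2$ with $\varpi(0)=\varpi'(0)=0$; this is shorter and avoids the $t$-derivative altogether, at the small cost of invoking the regularity of $\varpi$ at the origin (which the paper has already established in \eqref{REANAL-PEROME} and \eqref{ENCHA1}). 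Both routes are valid; yours is arguably more in the spirit of how the paper handles $E$ and $P$.
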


Theorem~\ref{CONSE} is actually the byproduct of the forthcoming Theorems~\ref{ANGMP}, \ref{eq:explicit_energy}
and~\ref{10-10}.

\begin{theorem}[{\bf Angular Momentum conservation}]
Let $u(t,x)$ be a solution of  \eqref{eq:linear_per}. Then
\begin{equation}\label{ANGMP}
L[u(t,\cdot)] =\rho\int_{\mathbb{R}}x\,v_1(x)\,dx.
\end{equation}
\end{theorem}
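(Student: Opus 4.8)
The plan is to show that the quantity $\int_{\R}x\,u_t(t,x)\,dx$ is independent of $t>0$, and then to identify its value by letting $t\to0^+$. Since $L[u(t,\cdot)]=\rho\int_\R x\,u_t(t,x)\,dx$ by definition, this yields~\eqref{ANGMP} (the value at $t=0$ being immediate because $u_t(0,\cdot)=v_1$).

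First I would argue that $t\mapsto\int_\R x\,u_t(t,x)\,dx$ is differentiable on $(0,+\infty)$ with
$$\frac{d}{dt}\int_\R x\,u_t(t,x)\,dx=\int_\R x\,u_{tt}(t,x)\,dx=\frac1\rho\int_\R x\,K(u)(t,x)\,dx,$$
the last equality being the equation in~\eqref{eq:linear_per}. To justify the differentiation under the integral sign I would use the explicit representation~\eqref{eq:sol}, which gives $u_{tt}(t,x)=\int_\R e^{-i\xi x}\big[-\omega^2(\xi)\widehat{v_0}(\xi)\cos(\omega(\xi)t)-\omega(\xi)\widehat{v_1}(\xi)\sin(\omega(\xi)t)\big]\,d\xi$. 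Exactly as in the proof of Theorem~\ref{TH4.1} (analyticity and polynomial growth of $\omega^2$, together with the bound~\eqref{DFCH}), the integrand is smooth and rapidly decreasing in $\xi$ with $t$-dependence controlled locally in $t$; integrating by parts three times in $\xi$ one gets $|x\,u_{tt}(t,x)|\le C(t)(1+|x|)^{-2}$ with $C(t)$ locally bounded, which provides the domination needed to pass $\frac{d}{dt}$ inside the integral.

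The heart of the matter is then the identity $\int_\R x\,K(u)(t,x)\,dx=0$ for every $t>0$. Writing $K(u)$ in the symmetric principal-value form of~\eqref{PVSTR8}, namely $K(u)(t,x)=-\kappa\int_{-\delta}^{\delta}\frac{2u(t,x)-u(t,x+y)-u(t,x-y)}{|y|^{1+2\alpha}}\,dy$, I would apply Fubini — which is licit thanks to~\eqref{ATT:004BIS} — to exchange the $x$- and $y$-integrations, and then, for fixed $y\in(-\delta,\delta)$, change variables $x\mapsto x+y$ and $x\mapsto x-y$ in the two translated terms. Since $\int_\R x\,u(t,x)\,dx$ and $\int_\R u(t,x)\,dx$ are both finite by~\eqref{ATT:002}, the inner $x$-integral collapses to $2\int_\R x\,u\,dx-\int_\R(x-y)u\,dx-\int_\R(x+y)u\,dx=0$, so the whole double integral vanishes. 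Hence $\int_\R x\,u_t(t,x)\,dx$ is constant on $(0,+\infty)$.

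Finally, I would combine this with~\eqref{ATT:005}, which gives $\lim_{t\to0^+}\int_\R x\,u_t(t,x)\,dx=\int_\R x\,v_1(x)\,dx$; therefore $\int_\R x\,u_t(t,x)\,dx=\int_\R x\,v_1(x)\,dx$ for all $t>0$, and multiplying by $\rho$ gives~\eqref{ANGMP}. The only delicate points are the two measure-theoretic interchanges — differentiation in $t$ under the integral and Fubini in $(x,y)$ — and both are already underwritten by the integrability estimates of the Corollary (namely~\eqref{ATT:002}, \eqref{ATT:004BIS} and~\eqref{ATT:005}); the algebraic cancellation itself is just the translation invariance of $dx$ combined with the action–reaction symmetry encoded in the kernel.
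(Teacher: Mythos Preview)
Your proposal is correct and follows essentially the same route as the paper: differentiate $\int_\R x\,u_t\,dx$ in time, use the equation, show the resulting spatial integral vanishes by translation invariance, and invoke~\eqref{ATT:005} for the value at $t=0^+$. The only cosmetic difference is that you work with the symmetrized kernel from~\eqref{PVSTR8}, which collapses the cancellation into a single step, whereas the paper keeps the non-symmetric form and performs two separate translation-invariance cancellations; you are also a bit more explicit about the dominated-convergence justification for passing $d/dt$ inside, which the paper handles by a blanket reference to~\eqref{ATT:002}, \eqref{ATT:003} and~\eqref{ATT:004BIS}.
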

\begin{proof}
Let $u(t,x)$ be a solution of  \eqref{eq:linear_per}. It follows that
\begin{align*}
\rho\frac{d}{dt}\int_{\mathbb{R}}x\,u_t\,dx=&
\int_{\mathbb{R}}\rho\,x\,u_{tt}\,dx=
-2\,\kappa\int_{\mathbb{R}}\int_{-\delta}^{\delta}x\frac{u(t,x)-u(t,x-y)}{|y|^{1+2\,\alpha}}dx\,dy
\nonumber\\
=&-2\kappa\int_{\mathbb{R}}
\int_{-\delta}^\delta
\left[x\,u(t,x)-\left(x-y\right)\,u(t,x-y)-y\,u(t,x-y)\right]dx\,\frac{dy}{|y|^{1+2\,\alpha}}
\nonumber\\
=&-2\kappa\lim_{\eps\rightarrow 0,\,\eps'\to0}
\int_{\left[-\delta,- \eps  \right]\cup\left[ \eps',\delta\right]}
\underbrace{\left[ \int_{\mathbb{R}}x\,u(t,x)dx - \int_{\mathbb{R}}\left(x-y\right)\,u(t,x-y)dx  \right]}_{=0}\frac{dy}{|y|^{1+2\,\alpha}}
\nonumber\\
&+2\kappa \int_{\mathbb{R}}\int_{-\delta}^\delta\frac{y}{|y|^{1+2\alpha}}\,u(t,x-y)dx\,dy
\nonumber\\
=&2\kappa \lim_{\eps\rightarrow 0}
\Bigg\{ \int_{\eps}^\delta \frac{1}{y^{2\alpha}}\left[ \int_{\mathbb{R}}u(t,x-y)dx \right]dy
-\int_{-\delta}^{- \eps} \frac{1}{|y|^{2\alpha}}\left[ \int_{\mathbb{R}}u(t,x-y)dx \right]dy
\Bigg\}
\nonumber\\
=&2\kappa \lim_{\eps\rightarrow 0}
\Bigg\{ \int_{\eps}^\delta \frac{1}{y^{2\alpha}}{\left[ \int_{\mathbb{R}}u(t,x-y)dx \right]}dy
-\int_{ \eps}^{\delta} \frac{1}{y^{2\alpha}}{\left[ \int_{\mathbb{R}}u(t,x+y)dx \right]}dy
\Bigg\}
\nonumber\\
=&2\kappa \lim_{\eps\rightarrow 0}
\Bigg\{ \int_\eps^\delta \frac{1}{y^{2\alpha}}\underbrace{\left[ \int_{\mathbb{R}}u(t,x-y)dx 
-\int_{\mathbb{R}}u(t,x+y)dx \right]}_{=0}dy
\Bigg\},
\end{align*}
where the two integrals in the third and last lines cancel due to the translational invariance --
and we stress that the integrals involved in the computations are finite, thanks
to~\eqref{ATT:002}, \eqref{ATT:003} and~\eqref{ATT:004BIS} (recall also~\eqref{PVSTR8}).

We have thus shown that the angular momentum is constant in time,
whence~\eqref{ANGMP} follows from~\eqref{ATT:005}
and~\eqref{def:ang_momentum}.\end{proof}

\begin{theorem}[{\bf Energy conservation}]\label{ENECON}
Let $u(t,x)$ be a solution of the \eqref{eq:linear_per} according to \eqref{eq:sol}. Then
\begin{equation}\begin{split}
E[u(t,\cdot)]\,&=\,\rho\pi
\int_{\mathbb{R}} \left\{\omega^2(\xi)|\widehat{v_0}(\xi)|^2
+|\widehat{v_1}(\xi)|^2\right\}d\xi \\&=\,\frac{\rho\pi}{2}\|v_1\|^2_{L^2(\mathbb{R})}
+\frac{\kappa}{2}\int_{\mathbb{R}}\int_{-\delta}^{\delta}\frac{|v_0(x)-v_0(x-y)|^2}{|y|^{1+2\alpha}}\,dx\,dy.\end{split}\label{CRCRTS}
\end{equation}
\label{eq:explicit_energy}
\end{theorem}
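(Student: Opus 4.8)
The plan is to assemble $E[u(t,\cdot)]$ out of the two Fourier-side identities already obtained in the proof of the Corollary: formula~\eqref{eq:kin_en} for $\|u_t(t,\cdot)\|^2_{L^2(\mathbb{R})}$ and formula~\eqref{eq:pot_en} for $\int_{\mathbb{R}}\int_{-\delta}^{\delta}\frac{|u(t,x)-u(t,x-y)|^2}{|y|^{1+2\alpha}}\,dx\,dy$. First I would multiply~\eqref{eq:kin_en} by $\rho/2$ and~\eqref{eq:pot_en} by $\kappa/2$, in accordance with~\eqref{def:energy}, and add. The decisive algebraic point is that the ``mixed'' contributions, namely those proportional to $\omega(\xi)\sin(\omega(\xi)t)\cos(\omega(\xi)t)\big[\widehat{v_0}(\xi)\widehat{v}^*_1(\xi)+\widehat{v_1}(\xi)\widehat{v}^*_0(\xi)\big]$, occur in the kinetic piece with a minus sign and in the elastic piece with a plus sign; moreover, after the multiplications their overall coefficients coincide, since the factor $\kappa$ in front of~\eqref{eq:pot_en} cancels the $\kappa^{-1}$ inside it and reproduces exactly the prefactor $\rho\pi$ already carried by~\eqref{eq:kin_en}. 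Hence the mixed terms cancel identically.

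What survives is
\begin{equation*}
E[u(t,\cdot)]=\rho\pi\int_{\mathbb{R}}\Big\{\omega^2(\xi)|\widehat{v_0}(\xi)|^2\big(\sin^2(\omega(\xi)t)+\cos^2(\omega(\xi)t)\big)+|\widehat{v_1}(\xi)|^2\big(\cos^2(\omega(\xi)t)+\sin^2(\omega(\xi)t)\big)\Big\}\,d\xi,
\end{equation*}
and the Pythagorean identity collapses the integrand to $\omega^2(\xi)|\widehat{v_0}(\xi)|^2+|\widehat{v_1}(\xi)|^2$, which is independent of $t$. This establishes the first equality in~\eqref{CRCRTS} and, in particular, the conservation of energy in time.

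For the second equality in~\eqref{CRCRTS} I would simply evaluate the (now known to be $t$-independent) expression at $t=0$. Directly from Definition~\ref{DE:ENMO} and the initial data in~\eqref{eq:linear_per} one has $E[u(0,\cdot)]=\frac{\rho}{2}\|v_1\|^2_{L^2(\mathbb{R})}+\frac{\kappa}{2}\int_{\mathbb{R}}\int_{-\delta}^{\delta}\frac{|v_0(x)-v_0(x-y)|^2}{|y|^{1+2\alpha}}\,dx\,dy$; equating this with $\rho\pi\int_{\mathbb{R}}\{\omega^2|\widehat{v_0}|^2+|\widehat{v_1}|^2\}\,d\xi$ gives the claim after rewriting $\|v_1\|^2_{L^2(\mathbb{R})}$ by Plancherel's theorem and recognizing the elastic term through the identity recorded in~\eqref{ATT:004:CON}, which expresses $\int_{\mathbb{R}}\int_{-\delta}^{\delta}\frac{|v_0(x)-v_0(x-y)|^2}{|y|^{1+2\alpha}}\,dx\,dy$ in terms of $\int_{\mathbb{R}}\omega^2(\xi)|\widehat{v_0}(\xi)|^2\,d\xi$ via~\eqref{eq:disp_rel}.

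There is no obstacle of substance here: the only care needed is bookkeeping, since one must confirm that every integral in sight converges absolutely, so that the Fubini rearrangements already embedded in~\eqref{eq:kin_en} and~\eqref{eq:pot_en} are legitimate. Finiteness of the kinetic term is~\eqref{ATT:001}, of the elastic term is~\eqref{ATT:004}, and the rapid decay of $\widehat{v_0},\widehat{v_1}$ together with the growth bound~\eqref{DFCH} on $\omega$ controls the Fourier-side integrals; all of these facts are already in hand from the Corollary. Notably, one never needs to differentiate $E$ in $t$: since $E[u(t,\cdot)]$ is produced in closed form as a $t$-independent quantity, time-differentiability is bypassed entirely.
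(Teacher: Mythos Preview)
Your proposal is correct and follows essentially the same route as the paper's own proof: combine \eqref{eq:kin_en} and \eqref{eq:pot_en} via Definition~\ref{DE:ENMO}, observe that the mixed terms cancel and the $\sin^2+\cos^2$ terms collapse, and then invoke \eqref{ATT:004:CON} for the second identity. The paper's argument is terser but structurally identical, including the appeal to \eqref{ATT:004:CON} for the final step.
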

\begin{proof} {F}rom~\eqref{def:energy} in
Definition~\ref{def:energy} and equations~\eqref{eq:kin_en} and \eqref{eq:pot_en}, we get
\begin{align*}
E[u(t,\cdot)]=&\rho\pi
\int_{\mathbb{R}} \left\{\omega^2(\xi)|\widehat{v_0}(\xi)|^2\sin^2\left(\omega(\xi)\,t\right)
+|\widehat{v_1}(\xi)|^2\cos^2\left(\omega(\xi)\,t\right)\right.\nonumber\\
&\left.-\omega(\xi)\sin\left(\omega(\xi)\,t\right)\cos\left(\omega(\xi)\,t\right)
\left[ \widehat{v_0}(\xi)\widehat{v}^*_1(\xi)
+\widehat{v}^*_0(\xi)\widehat{v_1}(\xi) \right]\right.\nonumber\\
&+\omega^2(\xi)|\widehat{v_0}(\xi)|^2\cos^2\left(\omega(\xi)\,t\right)
+|\widehat{v_1}(\xi)|^2\sin^2\left(\omega(\xi)\,t\right)
\nonumber\\
&\left.+\omega(\xi)\cos\left( \omega(\xi)t \right)\sin\left( \omega(\xi)t \right)\left[\widehat{v_0}(\xi)\widehat{v}^*_1(\xi)
+\widehat{v_1}(\xi)\widehat{v}^*_0(\xi)\right]\right\}d\xi
\nonumber\\
=&\rho\pi
\int_{\mathbb{R}} \left\{\omega^2(\xi)|\widehat{v_0}(\xi)|^2
+|\widehat{v_1}(\xi)|^2\right\}d\xi\,.
\end{align*}
This establishes the first identity in~\eqref{CRCRTS}.
In particular, the energy is constant in time
and recalling~\eqref{ATT:004:CON} we obtain the second identity
in~\eqref{CRCRTS}.
\end{proof}

\begin{theorem}[{\bf Momentum conservation}]
Let $u(t,x)$ be a solution of the \eqref{eq:linear_per} according to \eqref{eq:sol}. Then
\begin{equation}
P[u(t,\cdot)]=\rho\,\int_{\mathbb{R}}v_1(x)\,dx.
\end{equation}\label{10-10}
\end{theorem}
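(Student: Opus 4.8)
The plan is to read the momentum directly off the Fourier representation~\eqref{eq:sol}, in the same spirit as the computations in the proof of the Corollary and of Theorem~\ref{eq:explicit_energy}. First I would differentiate~\eqref{eq:sol} in~$t$ to recover~\eqref{UTDIS}, namely
\[u_t(t,x)=\int_{\mathbb{R}} e^{-i\xi x}\,h(t,\xi)\,d\xi,\qquad h(t,\xi):=-\omega(\xi)\widehat{v_0}(\xi)\sin\left(\omega(\xi)\,t\right)+\widehat{v_1}(\xi)\cos\left(\omega(\xi)\,t\right),\]
and observe that~$P[u(t,\cdot)]$ is well defined since~$u_t(t,\cdot)\in L^1(\mathbb{R})$ by~\eqref{ATT:003}. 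By~\eqref{KI:001} (applied with~$v=v_0$) and~\eqref{KI:002} (applied with~$v=v_1$), the function~$\xi\mapsto h(t,\xi)$ lies in the Schwartz Space, so the displayed formula is precisely the Fourier inversion formula and hence~$h(t,\cdot)=\widehat{u_t(t,\cdot)}$. Evaluating at the origin --- equivalently, integrating the displayed formula over~$x\in\mathbb{R}$ and using the distributional identity~$\int_{\mathbb{R}}e^{-i\xi x}\,dx=2\pi\delta(\xi)$ recalled in the proof of the Corollary, the interchange of integrations being licensed by~\eqref{ATT:002} and~\eqref{ATT:003} --- gives~$\int_{\mathbb{R}}u_t(t,x)\,dx=2\pi\,h(t,0)$.

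The key point is then that~$\omega(0)=0$: indeed, by~\eqref{eq:disp_rel}, $\omega^2(0)=\frac{2\kappa}{\rho\,\delta^{2\alpha}}\int_{-1}^{1}\frac{1-\cos 0}{|z|^{1+2\alpha}}\,dz=0$. Consequently the term carrying~$\widehat{v_0}$ vanishes identically at~$\xi=0$ and~$\cos\left(\omega(0)t\right)=1$, so that~$h(t,0)=\widehat{v_1}(0)$; by the Fourier convention of footnote~\ref{FOUCO}, $\widehat{v_1}(0)=\tfrac{1}{2\pi}\int_{\mathbb{R}}v_1(x)\,dx$. Therefore
\[P[u(t,\cdot)]=\rho\int_{\mathbb{R}}u_t(t,x)\,dx=2\pi\rho\,\widehat{v_1}(0)=\rho\int_{\mathbb{R}}v_1(x)\,dx\]
for every~$t\ge 0$, which is the assertion.

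I do not expect a genuine obstacle here: the only delicate points are that the~$\widehat{v_0}$ contribution really disappears at~$\xi=0$ --- this is exactly where the factor~$\omega(0)=0$ does the work, so one must not cancel it prematurely against a~$1/\omega$ --- and that the interchange of integrations is justified, which is precisely what the integrability estimates~\eqref{ATT:002} and~\eqref{ATT:003} of the preceding Corollary provide. As an alternative route, I could mimic the proof of Theorem~\ref{ANGMP}: integrating~$\rho\,u_{tt}=K(u)$ over~$x\in\mathbb{R}$, writing~$K(u)$ in the symmetric form~\eqref{PVSTR8}, and using translation invariance to see that~$\int_{\mathbb{R}}\big(2u(t,x)-u(t,x+y)-u(t,x-y)\big)\,dx=0$ for each fixed~$y\in(-\delta,\delta)$ (the resulting double integral being absolutely convergent by the estimate behind~\eqref{ATT:004BIS}), one finds that~$\tfrac{d}{dt}P[u(t,\cdot)]=0$, and then identifies the constant by letting~$t\to0^+$; the direct Fourier argument above is preferable because it bypasses this last limiting step.
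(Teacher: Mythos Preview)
Your argument is correct and follows essentially the same route as the paper: integrate the Fourier representation~\eqref{UTDIS} over~$x$, use the distributional identity~$\int_{\mathbb{R}}e^{-i\xi x}\,dx=2\pi\delta(\xi)$, and exploit~$\omega(0)=0$ to obtain~$P[u(t,\cdot)]=2\pi\rho\,\widehat{v_1}(0)$. Your version is in fact more carefully justified, since you invoke~\eqref{KI:001}, \eqref{KI:002} and~\eqref{ATT:003} to legitimize the manipulations, whereas the paper's proof is rather terse.
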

\begin{proof}
{F}rom~\eqref{LS:XD} and \eqref{UTDIS} it follows that
\begin{align*}
P[u(t,\cdot)]=&2\rho\pi
\int_{\mathbb{R}}\delta(0)\left[-\omega(\xi)\widehat{v_0}(\xi)\sin\left(\omega(\xi)\,t\right)
+\widehat{v_1}(\xi)\cos\left(\omega(\xi)\,t\right)\right]\,d\xi
\nonumber\\
=&2\rho\pi
\left[-\omega(0)\widehat{v_0}(0)\sin\left(\omega(0)\,t\right)
+\widehat{v_1}(0)\cos\left(\omega(0)\,t\right)\right]
=2\rho\pi\,\widehat{v_1}(0)\,.\qedhere
\end{align*}
\end{proof}

\section{Numerics}
\label{numer}
From now on, let us consider the numerical integration (see also \cite{CCMP, CFLMP}) for the case $\alpha=1/10$,
$\rho=1$ and $\kappa=1/2$, whereas $\delta$ will be fixed case by case. Moreover, we fix initial conditions such that
\begin{equation}
v_0(x)=\sqrt{2\pi}\,e^{-2\,x^2}\qquad\text{and}\qquad v_1(x)=4\,v\,x\sqrt{2\pi}\,e^{-2\,x^2}\,,
\label{eq:gaussians}
\end{equation}
i.e. the initial deformation is Gaussian, with square root
of the variance~$\sigma=1/2$ and the initial velocity is given by the initial condition of a traveling wave $v_1(x)=v\,v_0'(x)$. The value of $v$
will be specified case by case later as well. Thus, in Fourier space, we have
\begin{equation}
\widehat{v_0}(\xi)=\frac{1}{2}e^{-\frac{\xi^2}{8}}\qquad\text{and}\qquad \widehat{v_1}(\xi)=i\,v\,\xi\,\widehat{v_0}(\xi).
\end{equation}
Notice that $\widehat{v_0}(\xi)$ is a Gaussian with $\widehat{\sigma}=2$.
The numerical evolution of this Gaussian
according to~\eqref{eq:linear_per} and~\eqref{eq:sol}
is depicted in Figure~\ref{FIGURE7}, where $\delta =1$ and $v=0$.

Let us emphasize some important differences
exhibited in Figure~\ref{FIGURE7} with respect to the
classical case of the wave equation (in which the solution
is simply the sum of two traveling positive Gaussians, as shown in\footnote{This numerical solution holds for the wave equation $u_{tt}=u_{xx}$, with initial conditions given by equation~\eqref{eq:gaussians} with $v=0$. This leads to the analogous of~\eqref{eq:sol} where $\omega(\xi)$ is replaced by $\xi$.} Figure~\ref{FIGURE8}).
First of all, the pattern in Figure~\ref{FIGURE7} is that of a {\em
sign-changing}
solutions. Also, {\em multiple critical points} happen to
arise as time goes.
Overall, in this situation,
with respect to the classical wave equation,
the case treated here seems to produce
{\em additional oscillations}.
Certainly, it is desirable to carry on
further analytical and numerical investigations of these possible
phenomena.

\begin{figure}[ht!]
\centering
\includegraphics[scale=0.65]{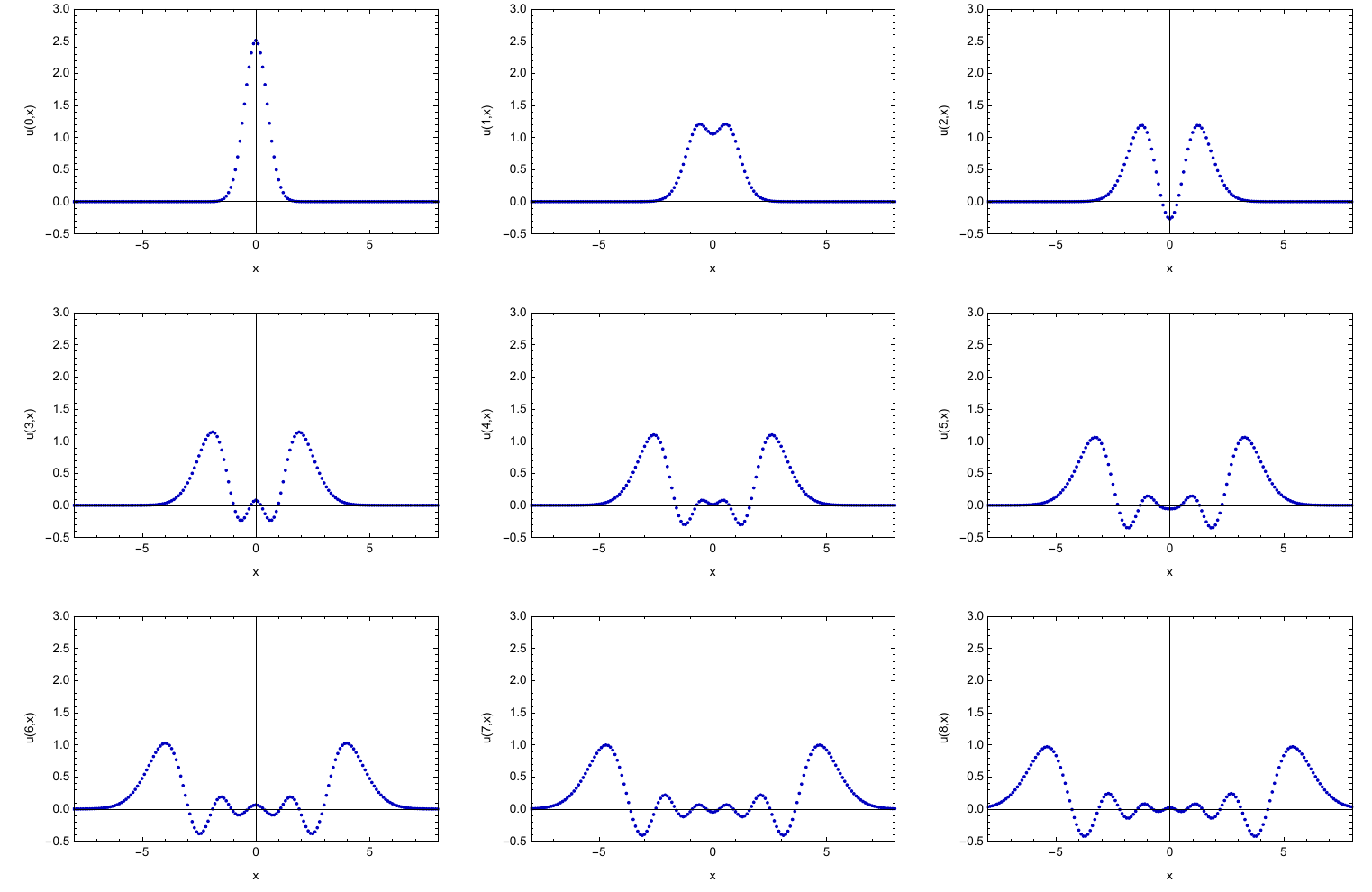}
\caption{\sl \footnotesize Numerical solution at different times, from $t=0$ until $t=8$, with unitary time-step. We imposed Gaussian initial conditions with $\sigma=\delta/2$ and initial velocity 0. This case refers to the parameters $\alpha=1/10$,
$\rho=1$, $\kappa=1/2$
and $\delta=1$.}
\label{FIGURE7}
\end{figure}

\begin{figure}[ht!]
\centering
\includegraphics[scale=0.65]{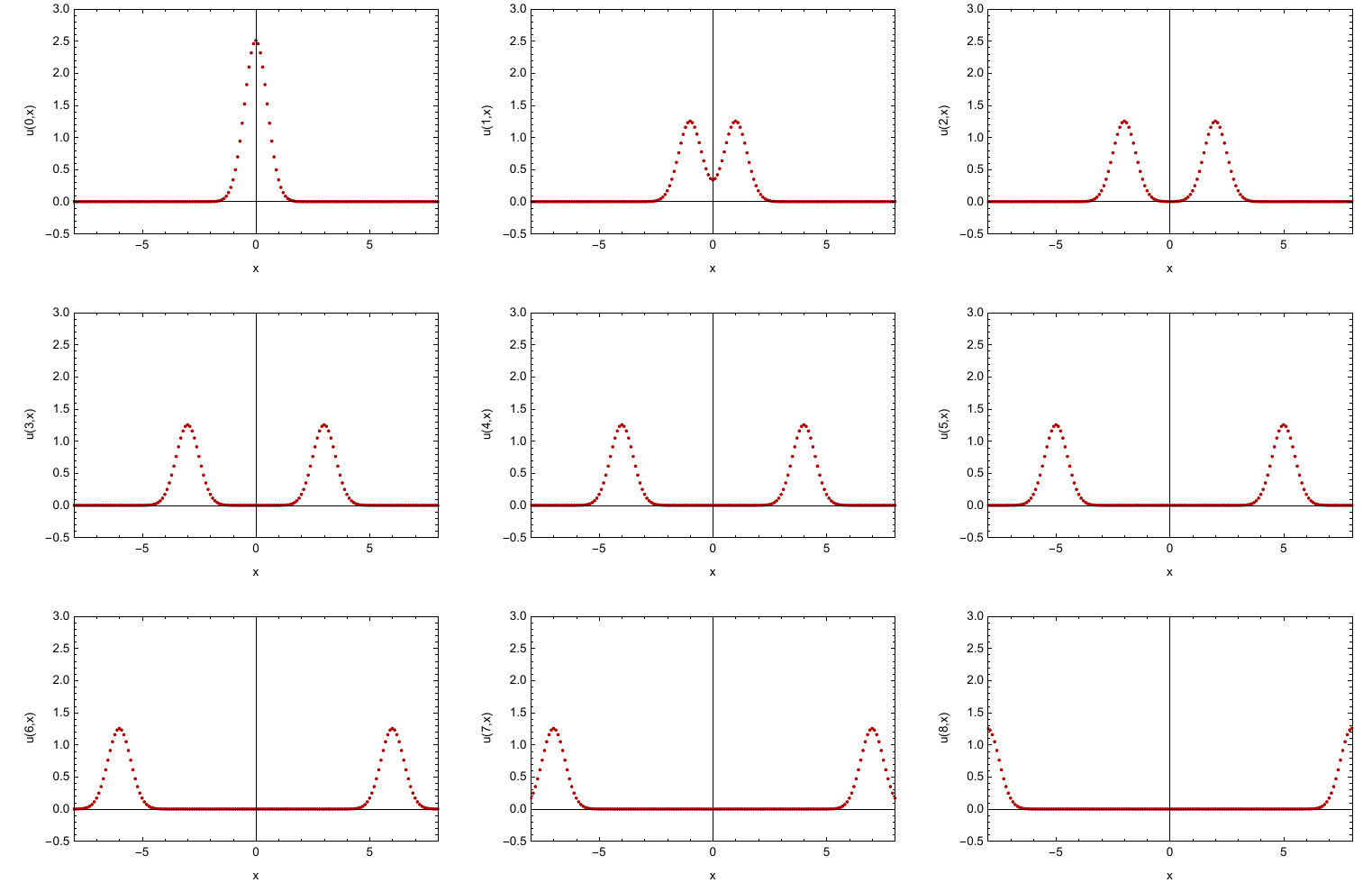}
\caption{\sl \footnotesize Numerical solution for the classical wave equation at different times, from $t=0$ until $t=8$, with unitary time-step. We imposed Gaussian initial conditions with $\sigma=1/2$, as given in equation \eqref{eq:gaussians}, and initial velocity $v=0$. This case refers to the wave equation $u_{tt}=u_{xx}$.}
\label{FIGURE8}
\end{figure}

Moreover, in Figure \ref{FIGURE9} we report numerical solutions for the Cauchy problem given by initial conditions \eqref{eq:gaussians} with $v=1$ and $\delta=1$.

\begin{figure}[ht!]
\centering
\includegraphics[scale=0.65]{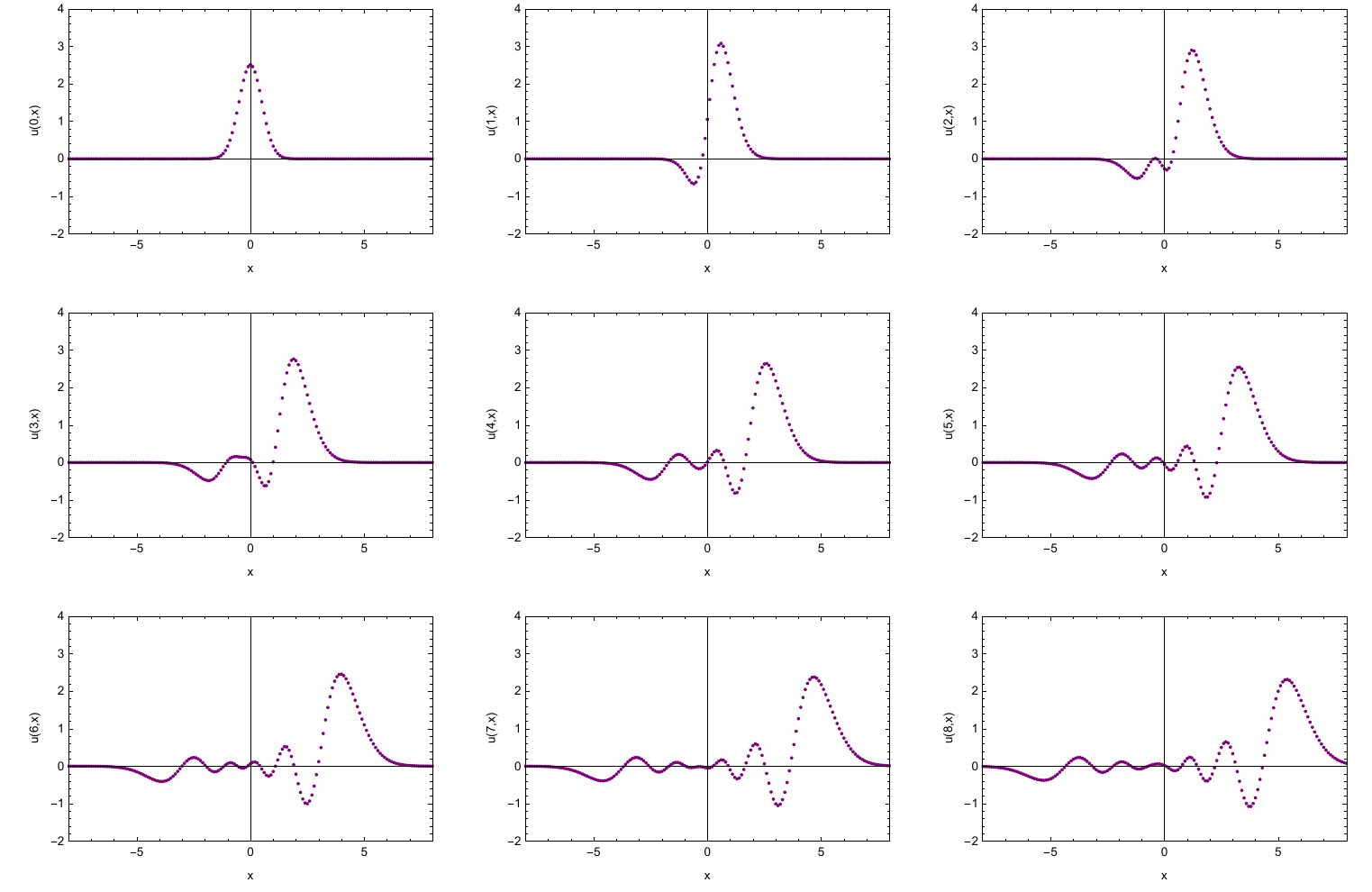}
\caption{\sl \footnotesize Numerical solution at different times, from $t=0$ until $t=8$, with unitary time-step. We imposed Gaussian initial conditions with $\sigma=\delta/2$ and initial velocity $v=1$. This case refers to the parameters $\alpha=1/10$,
$\rho=1$, $\kappa=1/2$
and $\delta=1$.}
\label{FIGURE9}
\end{figure}

We notice the presence of secondary oscillations left behind the wavefront, whose amplitude slowly decreases as time evolves. These two features are not present in the solution of the classical equation $u_{tt}=u_{xx}$. Indeed, in this case only a single Gaussian is expected to travel without neither deformation or damping of the amplitude.

Finally, in Figure \ref{FIGURE10} we show numerical solutions of our model for three different values of $\delta$ and initial velocity given by $v=\delta^{1-\alpha}/\sqrt{2(1-\alpha)}$. The latter choice is inspired by the limit of the dispersion relation on large scales, proven in Theorem \ref{ASYLOANDHI}.

\begin{figure}[ht!]
\centering
\includegraphics[scale=0.65]{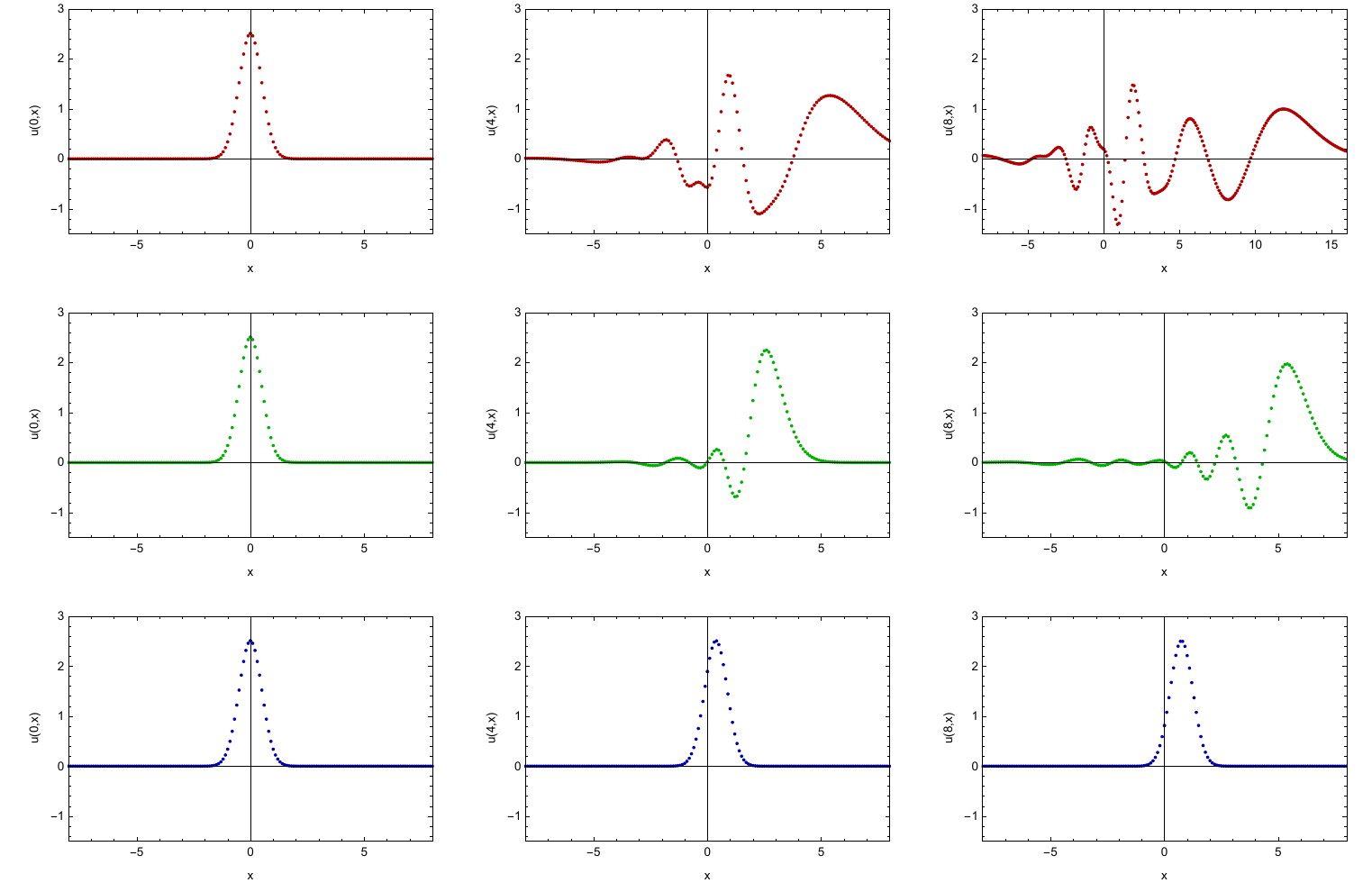}
\caption{\sl \footnotesize Numerical solution at $t=0$ (left panels), $t=4$ (center panels) and $t=8$ (right panels) where $\delta$ varies as $\delta =5/2$ (top), $\delta =1$ (middle) and $\delta =1/10$ (bottom). We imposed Gaussian initial conditions with $\sigma=1/2$ and initial velocity normalized as $v=\delta^{1-\alpha}/\sqrt{2(1-\alpha)}$. This case refers to the parameters $\alpha=1/10$,
$\rho=1$ and $\kappa=1/2$.}
\label{FIGURE10}
\end{figure}

Figure \ref{FIGURE10} refers to~$\delta=5/2$ (red), $\delta=1$ (green) and $\delta=1/10$ (blue). To properly interpret this numerical solution, we compare the values of $\delta$ with the dispersion of the Gaussian in the chosen initial condition ($\sigma=1/2$). For $\delta=5/2$, we have that $\delta = 5 \sigma$. This means that the deformation introduced by the initial conditions involves scales which are small when compared with $\delta$, i.e. the characteristic range of the nonlocal model. This leads to a naive expectations that most of the modes $\xi$ propagates with dispersion relation of order~$\xi^\alpha$ and hence the evolution is highly dispersive. This expectation is in qualitative agreement with what shown in the red plots of Figure \ref{FIGURE10}.

On the opposite case, when $\delta=1/10$, we have that $\delta = \sigma/5$ and then the deformation induced by the initial conditions are on large scales when compared with $\delta$. In this case, most of the involved scales propagates with dispersion relation $\approx \delta^{1-\alpha}/\sqrt{2(1-\alpha)}\,\xi$, whose group velocity is the same as the one given in the initial conditions. Hence we expect that this case is nondispersive. Bottom panels of Figure \ref{FIGURE10} are in line with this expectation.

Finally, middle panels in Figure \ref{FIGURE10} exploit the case when $\delta=1$ and $\sigma=1/2$ are of the same order. We notice an evolution which is still dispersive, just as in Figure \ref{FIGURE9}. However, in Figure \ref{FIGURE10} the secondary oscillations have smaller amplitude that in Figure \ref{FIGURE9}. We address this behavior to the fact that, in Figure \ref{FIGURE10}, deformation of large scales travel with velocity $v=\delta^{1-\alpha}/\sqrt{2(1-\alpha)}$ which is just the one emerging from the dispersion relation when $\xi\rightarrow 0$.

{
\section{Approximation of nonlinear equations}\label{BRIEF}

Since this is the first paper analyzing precisely the dispersive properties of a specific nonlocal model, we focused our attention on the linear case. 
However, as customary in mathematics, a good understanding of the linear case also provides useful information on its nonlinear counterpart.
As an example of this fact, we point out that solutions of nonlinear variants of equation~\eqref{eq:linear_per} remain very close, for short times,
to solutions of the original linear equation:

\begin{proposition}
Let~$T>0$ and~$\alpha\in\left(\frac12,1\right)$.
Let~$u$ be a solution of~\eqref{eq:linear_per} and~$U$ be a solution of
\begin{equation*}
\begin{cases}
\rho\,U_{tt}=K(U)+F(t,x,U),&\quad t>0,\,x\in\R,\\[10pt]
U(0,x)=v_0(x),&\quad x\in\R,\\[5pt]
U_t(0,x)=v_1(x),&\quad x\in\R .
\end{cases}
\end{equation*}

Let~$\Phi(t,\xi)$ be the Fourier transform in the variable~$x$ of~$(t,x)\mapsto F(t,x,U(t,x))$ and assume that
$$|\Phi(t,\xi)|\le C$$
for all~$t\in[0,T]$ and~$\xi\in\R$.

Then, for all~$t\in[0,T]$,
$$ {\|U(t,\cdot)-u(t,\cdot)\|_{L^2(\mathbb{R})}}
\le Ct\,\sqrt{
\frac{\pi t^2}{\rho^2}+
\frac{2\pi \Upsilon}{\rho}},$$
where\footnote{We stress that~$\Upsilon<+\infty$ when~$\alpha\in\left(\frac12,1\right)$,
thanks to~\eqref{IMP}.}
$$ \Upsilon:=\int_{\R\setminus(-1,1)}
\frac{d\xi}{\omega^2(\xi)} .$$
\end{proposition}

\begin{proof} Let~$w:=U-u$. We observe that
\begin{equation*}
\begin{cases}
\rho\,w_{tt}=K(w)+F(t,x,U),&\quad t>0,\,x\in\R,\\[10pt]
w(0,x)=0,&\quad x\in\R,\\[5pt]
w_t(0,x)=0,&\quad x\in\R .
\end{cases}
\end{equation*}
As a result, taking the Fourier transform in the variable~$x$,
\begin{equation*}
\begin{cases}
\rho\,\widehat w_{tt}=-\omega^2\widehat w+\Phi,&\quad t>0,\,\xi\in\R,\\[10pt]
\widehat w(0,x)=0,&\quad \xi\in\R,\\[5pt]
\widehat w_t(0,x)=0,&\quad \xi\in\R .
\end{cases}
\end{equation*}
Let now
$$\zeta:=\sqrt{\rho|\widehat w_t|^2+\omega^2|\widehat w|^2}.$$
Then, for all~$t\in(0,T)$,
\begin{eqnarray*}
|\zeta_t|&=&\left|
\frac{\rho\widehat w_t \widehat w^*_{tt}+\rho \widehat w_t^*\widehat w_{tt}+\omega^2\widehat w
\widehat w_t^*+\omega^2 \widehat w^*\widehat w_t}{2\zeta}\right|
\\ &\le& \frac{ |\widehat w_t|\,|\rho\widehat w_{tt} +\omega^2\widehat w|}{\zeta}
\\ &=& \frac{|\widehat w_t|\,|\Phi|}{\zeta}\\&\le& \frac{|\Phi|}{\sqrt\rho}\\&\le&\frac{C}{\sqrt\rho}.
\end{eqnarray*}
Consequently, for all~$t\in(0,T)$,
\begin{eqnarray*}
&&\zeta(t)=\zeta(t)-\zeta(0)\le\frac{Ct}{\sqrt\rho}\end{eqnarray*}
and therefore
$$ |\widehat w_t|\le\frac{Ct}{\rho}\qquad{\mbox{and}}\qquad
|\widehat w|\le\frac{Ct}{\sqrt\rho\,\omega}.$$
{F}rom this, we infer that, for all~$t\in(0,T)$,
$$ |\widehat w(t,\xi)|=|\widehat w(t,\xi)-\widehat w(0,\xi)|\le
\int_0^t |\widehat w_t(\theta,\xi)|d\theta\le
\int_0^t\frac{C\theta}{\rho}d\theta
=\frac{Ct^2}{2\rho},$$
giving that
\begin{eqnarray*}&&
\int_{-1}^{1}|\widehat w(t,\xi)|^2 d\xi
\le\frac{C^2t^4}{2\rho^2}.
\end{eqnarray*}
We see in addition that
\begin{eqnarray*}&&
\int_{\R\setminus(-1,1)}|\widehat w(t,\xi)|^2 d\xi
\le\int_{\R\setminus(-1,1)}
\frac{C^2t^2}{\rho\,\omega^2(\xi)} d\xi
=\frac{C^2 \Upsilon t^2}{\rho}.
\end{eqnarray*}
Hence, by Plancherel Theorem,
\begin{eqnarray*}&&
\frac{\|w(t,\cdot)\|^2_{L^2(\mathbb{R})}}{2\pi}
=\|\widehat w(t,\cdot)\|^2_{L^2(\mathbb{R})} =
\int_\R |\widehat w(t,\xi)|^2 d\xi\le
\frac{C^2t^4}{2\rho^2}+
\frac{C^2 \Upsilon t^2}{\rho}
,\end{eqnarray*}
which yields the desired result.
\end{proof}

\section{Mathematical properties versus real world situations}\label{0876540987654PK-odk3k05ug}

In our opinion, an interesting feature of our results in view of concrete applications is that, in principle, our explicit bounds allow comparisons and confrontations of different models with real world experiments. That is, on the one hand, many models in elasticity, and in general in physics, rely on phenomenological considerations and on prime principles whose applicability in the range under consideration is debatable; moreover, the precise quantitative assumptions on many physical models are often taken more in view of convenient mathematical simplifications than due to objective constraints (see e.g. footnote~7 in~\cite{04783}).
On the other hand, it is often desirable to compare these models with real cases, or to compare different models between themselves.
For this confrontation, it is vital to have explicit and quantitatively precise quantities to be taken into account, possibly in a way which is also intuitive to compare and easy to communicate. In this regard, for example, we think that the regularity and convexity properties discussed after Theorem~\ref{OMEPI-P} can provide very useful information: as a matter of fact, the measure of the frequencies displayed by a given solution is already a broadly used notion, and the detection of corners, jumps, oscillations, decay and convexity properties is visually convenient and can promptly assess the consistency of a given model with an experimented phenomenon as well as the compatibility of different models to describe a particular phenomenon.\medskip

The expressions in~\eqref{eq:sol} and~\eqref{eq:disp_rel} can also be explicitly compared to
the solutions of the classical wave equation. For example, to keep the discussion as simple as possible,
one can just focus on the case in which the initial datum has zero velocity and frequencies uniformly distributed
in a given region, say
$$\widehat{v_0}(\xi):=\chi_{(-b,-a)\cup(a,b)}(\xi)\qquad{\mbox{ and }}\qquad\widehat{v_1}(\xi):=0 ,$$
for some~$b>a\ge0$.

Notice that these choices correspond to the oscillatory and decaying initial data
$$ {v_0}(x):=\frac2{x}\big(\sin(bx)-\sin(ax)\big)\qquad{\mbox{ and }}\qquad {v_1}(x):=0 ,$$
to be included as a limit case of our admissible initial configurations.

In this setting, the solution in~\eqref{eq:sol} boils down to
\begin{eqnarray*}&&
u(t,x)=\int_{(-b,-a)\cup(a,b)} e^{-i\xi x} \cos\left(\omega(\xi)\,t\right) d\xi=
2\int_{a}^b \cos(\xi x)\, \cos\left(\omega(\xi)\,t\right) d\xi
,\end{eqnarray*}
to be confronted, for instance, with the solution of the classical wave equation
obtained by formally replacing~$\omega(\xi)$ with~$|\xi|$, that is
$$ u_0(t,x)=\begin{cases}\displaystyle
\frac{2}{x^2-t^2}\,\Big(t \sin(a t) \cos(a x) - x \cos(a t) \sin(a x) - t \sin(b t) \cos(b x) + x \cos(b t) \sin(b x)\Big)&{\mbox{ if }}t\ne x,\\
\\
\displaystyle\frac{2 x (b - a) + \sin(2 b x)- \sin(2 a x)}{2 x}
& {\mbox{ if }}t=x
.\end{cases}$$
The frequency analysis of~$u$ appears to be significantly different from that of~$u_0$, since, for~$\xi\in(a,b)$,
$$ \widehat u(t,\xi)=\cos\left(\omega(\xi)\,t\right)
\qquad{\mbox{and}}\qquad
\widehat u_0(t,\xi)=\cos(\xi t).$$
In particular, when~$b$ is close to zero, the two frequency functions may look rather similar (up to normalizing factors),
due to~\eqref{LS:XD}, but when~$a$ is large we have that~$\widehat u$ exhibits {\em highly nonlinear oscillations},
in view of~\eqref{IMP}. The smaller the value of the parameter~$\alpha$, the more significant the appearance of these
nonlinear oscillations, see e.g. Figure~\ref{COMPA-ome}.

\begin{figure}[ht!]
\centering
\includegraphics[width=0.9\textwidth,height=0.3\textheight]{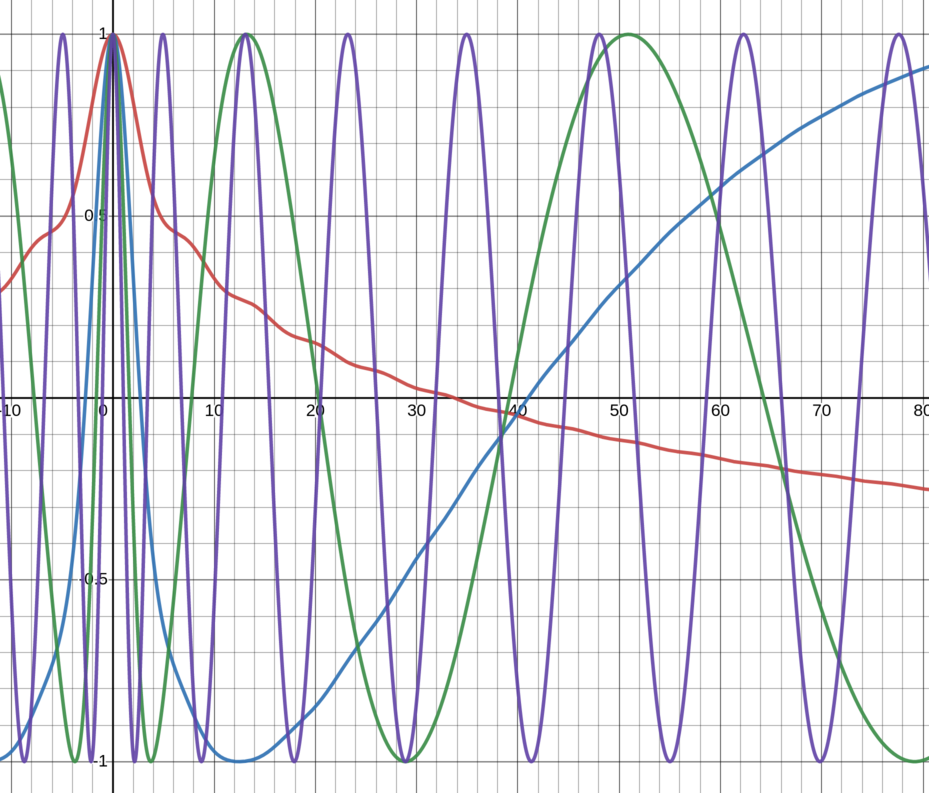}
\caption{\sl \footnotesize 
Plot of $\xi\mapsto\cos(\omega(\xi))$ with~$\delta:=1$ and~$\frac{2\kappa}{\rho}:=\alpha$
with~$\alpha:=0.7$ (violet), $\alpha:=0.5$ (green),
$\alpha:=0.3$ (blue)
and~$\alpha:=0.1$ (red).}\label{COMPA-ome}
\end{figure}

These nonlinear oscillations may well be not just a mathematical curiosity but reveal an interesting
feature induced by nonlocality: in a sense, {\em the oscillation of large frequencies gets stabilized by the nonlocal effects},
in the sense that, for any~$\xi\in(a,b)$, the choice~$\delta:=1$ and~$\frac{2\kappa}{\rho}:=\alpha$ in~\eqref{IMP}
and a further application of~\eqref{EULEF}
formally lead to
$$\frac{\omega(\xi)}{|\xi|^{\alpha}}=\left(2\alpha\,\int_{0}^{+\infty}
\frac{1-\cos \tau}{\tau^{1+2\alpha}}d\tau\right)^{1/2}=
\left(-2\alpha\,\cos(\pi\alpha)\Gamma(-2\alpha)\right)^{1/2}\sim1$$
as~$\alpha\to0$, justifying why the red curve in Figure~\ref{COMPA-ome}
``oscillates much less'' than the violet one in a given interval.}

\begin{appendix}

\section{Recovering the classical wave equation as~$\alpha\to1^-$}

In this appendix we 
discuss how
problem~\eqref{eq:linear_per} 
recovers the classical wave equation as~$\alpha\to1^-$,
and how the explicit solution provided
in~\eqref{eq:sol}
and~\eqref{eq:disp_rel} recovers in the limit
the one obtained for the wave
equation via Fourier methods.

\begin{lemma}\label{BAL:1}
If~$u\in C^2(\R)\cap L^\infty(\R)$, then
$$ \lim_{\alpha\to1^-}(1-\alpha) K(u)=C\kappa\Delta u,$$
for a suitable constant~$C>0$.
\end{lemma}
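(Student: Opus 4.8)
The plan is to reduce the statement to a second-order Taylor expansion of the symmetrized increment together with a careful handling of the two limits $r\to 0^+$ (a cutoff radius) and $\alpha\to 1^-$. First I would record that, by the principal value identity~\eqref{PVSTR8} and the definition of $K$ in~\eqref{eq:linear_per}, one has for $u\in C^2(\R)\cap L^\infty(\R)$
$$K(u)(x)=-\kappa\int_{-\delta}^{\delta}\frac{2u(x)-u(x+y)-u(x-y)}{|y|^{1+2\alpha}}\,dy,$$
and that this integral converges absolutely: near $y=0$ the numerator is $O(y^2)$ by $C^2$ regularity, so the integrand is dominated by $|y|^{1-2\alpha}$, which is integrable precisely because $2\alpha<2$; away from $y=0$ the numerator is bounded by $4\norm{u}_{L^\infty(\R)}$. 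I also recall that here $N=1$, so $\Delta u=u''$, and the constant sought will come out explicitly.

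Next, I would fix $x\in\R$ and $r\in(0,\delta)$ and split $K(u)(x)=I_r(\alpha)+J_r(\alpha)$, the integrals over $\{|y|\le r\}$ and over $\{r<|y|\le\delta\}$ respectively. For the tail, $|J_r(\alpha)|\le 4\kappa\,\norm{u}_{L^\infty(\R)}\int_{r<|y|\le\delta}|y|^{-1-2\alpha}\,dy\le \alpha^{-1}\,4\kappa\,\norm{u}_{L^\infty(\R)}\,r^{-2\alpha}$, so $(1-\alpha)J_r(\alpha)\to 0$ as $\alpha\to1^-$ for every fixed $r$: the kernel singularity that develops as $\alpha\to1^-$ is harmless away from the diagonal and is annihilated by the prefactor. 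For $I_r$ I would use Taylor's formula with integral remainder to write $2u(x)-u(x+y)-u(x-y)=-u''(x)\,y^2+R_x(y)$, where $|R_x(y)|\le y^2\,\omega_x(|y|)$ and $\omega_x(s):=\sup_{|t|\le s}|u''(x+t)-u''(x)|\to0$ as $s\to0^+$ by continuity of $u''$ at $x$. Since $\int_{|y|\le r}|y|^{1-2\alpha}\,dy=\frac{r^{2-2\alpha}}{1-\alpha}$, this gives $(1-\alpha)I_r(\alpha)=\kappa\,u''(x)\,r^{2-2\alpha}+\mathcal E_r(\alpha)$ with $|\mathcal E_r(\alpha)|\le\kappa\,\omega_x(r)\,r^{2-2\alpha}$; note how the diverging integral $\frac{r^{2-2\alpha}}{1-\alpha}$ is exactly compensated by the factor $(1-\alpha)$.

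Finally, combining the two pieces and using that $r^{2-2\alpha}\to1$ as $\alpha\to1^-$, for every fixed $r\in(0,\delta)$ I obtain
$$\limsup_{\alpha\to1^-}\bigl|(1-\alpha)K(u)(x)-\kappa\,u''(x)\bigr|\le\kappa\,\omega_x(r),$$
and letting $r\to0^+$ yields $\lim_{\alpha\to1^-}(1-\alpha)K(u)(x)=\kappa\,u''(x)=\kappa\,\Delta u(x)$, that is, the claim with $C=1$. The step I expect to be the main obstacle — indeed essentially the only delicate point — is this interplay of the two limits: one cannot pass $\alpha\to1^-$ under the $\{|y|\le r\}$ integral by a naive dominated convergence argument, since $r^{2-2\alpha}$ tends to $1$ rather than to $0$, so the diverging and vanishing factors must be tracked against each other; the argument has to keep $r$ fixed, send $\alpha\to1^-$ first, and only then let $r\to0^+$. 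If one wanted the convergence to hold locally uniformly in $x$ (or in $L^p_{\mathrm{loc}}$), it would suffice to observe that on any compact set the modulus of continuity $\omega_x(r)$ can be chosen uniformly in $x$, which is automatic from $u\in C^2(\R)$.
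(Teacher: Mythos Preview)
Your proof is correct and, in fact, more self-contained than the paper's. The paper takes an indirect route: it writes the fractional Laplacian as
\[
-(-\Delta)^\alpha u = C_\alpha\left[\frac{K(u)}{2\kappa}+\int_{\R\setminus(-\delta,\delta)}\frac{u(x-y)-u(x)}{|y|^{1+2\alpha}}\,dy\right],
\]
then invokes two facts from~\cite{DNPV}, namely the asymptotics $C_\alpha/(1-\alpha)\to C_\star$ and the convergence $(-\Delta)^\alpha u\to -\Delta u$ as $\alpha\to1^-$, and finishes by showing that the tail integral is killed by the factor~$(1-\alpha)$. In contrast, you bypass the fractional Laplacian entirely: you split at a cutoff radius~$r$, control the outer piece by the $L^\infty$ bound, and control the inner piece by a second-order Taylor expansion whose main term $\kappa\,u''(x)\int_{|y|\le r}|y|^{1-2\alpha}\,dy=\kappa\,u''(x)\,r^{2-2\alpha}/(1-\alpha)$ exactly absorbs the prefactor~$(1-\alpha)$, leaving a remainder of size $\kappa\,\omega_x(r)$ after the limit~$\alpha\to1^-$. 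Your argument is essentially the elementary proof of the [DNPV] convergence specialized to this kernel, and it has the bonus of producing the explicit constant~$C=1$ rather than leaving it implicit through~$C_\star$. The paper's approach is shorter only because it outsources the core computation to a reference.
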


\begin{proof}
By the definition of~$K(u)$
in~\eqref{eq:linear_per},
and exploiting~\cite[equations~(3.1) and~(4.3)]{DNPV}, we have that for all~$\alpha\in(0,1)$,
\begin{equation}\label{RU:1}
-(-\Delta)^\alpha u=
C_\alpha \int_{\R}\frac{u(t,x-y)-u(t,x)}{|y|^{1+2\,\alpha}}dy
=
C_\alpha\left[
\frac{K(u)}{2\kappa}+
\int_{\R\setminus(-\delta,\delta)}\frac{u(t,x-y)-u(t,x)}{|y|^{1+2\,\alpha}}dy
\right],
\end{equation}
for some~$C_\alpha$ such that
\begin{equation}\label{RU:2}
\lim_{\alpha\to1^-}\frac{C_\alpha}{1-\alpha}=C_\star,\end{equation}
for a suitable constant~$C_\star>0$.

Furthermore,
\begin{eqnarray*}
\int_{\R\setminus(-\delta,\delta)}\frac{|u(t,x-y)-u(t,x)|}{|y|^{1+2\,\alpha}}dy\le
\int_{\R\setminus(-\delta,\delta)}\frac{2\|u\|_{L^\infty(\R)}}{|y|^{1+2\,\alpha}}dy
= \int_{ \delta}^{+\infty}\frac{4\|u\|_{L^\infty(\R)}}{y^{1+2\,\alpha}}dy=
\frac{2\|u\|_{L^\infty(\R)}}{\alpha\delta^{2\alpha}}
\end{eqnarray*}
and consequently
$$ \lim_{\alpha\to1^-}(1-\alpha) \int_{\R\setminus(-\delta,\delta)}\frac{|u(t,x-y)-u(t,x)|}{|y|^{1+2\,\alpha}}dy=0.$$
Using this, \eqref{RU:1}, \eqref{RU:2} and~\cite[Proposition~4.4(ii)]{DNPV},
we conclude that
\begin{eqnarray*}&&
\Delta u=\lim_{\alpha\to1^-}
-(-\Delta)^\alpha u=\lim_{\alpha\to1^-}
\frac{C_\alpha}{1-\alpha}\,(1-\alpha)\left[
\frac{K(u)}{2\kappa}+
\int_{\R\setminus(-\delta,\delta)}\frac{u(t,x-y)-u(t,x)}{|y|^{1+2\,\alpha}}dy
\right]\\&&\qquad\qquad=\frac{C_\star}{2\kappa} \lim_{\alpha\to1^-}(1-\alpha)K(u),
\end{eqnarray*}
as desired.
\end{proof}

\begin{lemma}\label{BAL:2}
We have that
$$ \lim_{\alpha\to1^-}\sqrt{1-\alpha}\; 
\omega(\xi)=
\frac{C\sqrt{\kappa} \,|\xi|}{\sqrt\rho},$$
for a suitable constant~$C>0$.
\end{lemma}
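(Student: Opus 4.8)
The plan is to start from the explicit formula for the dispersion relation in~\eqref{eq:disp_rel}, namely
\[
\omega^2(\xi)=\frac{2\kappa}{\rho\,\delta^{2\alpha}}\int_{-1}^{1}\frac{1-\cos(\xi\delta z)}{|z|^{1+2\alpha}}\,dz
=\frac{4\kappa}{\rho\,\delta^{2\alpha}}\int_{0}^{1}\frac{1-\cos(\xi\delta z)}{z^{1+2\alpha}}\,dz,
\]
and to compute the limit of $(1-\alpha)\,\omega^2(\xi)$ as $\alpha\to1^-$; the conclusion for $\sqrt{1-\alpha}\,\omega(\xi)$ then follows by taking square roots (note $\omega\ge0$). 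The key observation, already visible in the computation~\eqref{ENCHA1} in the proof of Theorem~\ref{OMEPI-P} and in the estimate~\eqref{3.3} in the proof of Theorem~\ref{OMEPI-P-LOGA}, is that the integral $\int_0^1 \frac{1-\cos(\xi\delta z)}{z^{1+2\alpha}}\,dz$ diverges like $\frac{1}{2(1-\alpha)}$ as $\alpha\to1^-$, because the only singular contribution comes from the behaviour of the integrand near $z=0$, where $1-\cos(\xi\delta z)\sim \frac{(\xi\delta z)^2}{2}$ and $\int_0^1 z^{1-2\alpha}\,dz=\frac{1}{2(1-\alpha)}$.

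Concretely, I would split
\[
\int_0^1\frac{1-\cos(\xi\delta z)}{z^{1+2\alpha}}\,dz
=\frac{(\xi\delta)^2}{2}\int_0^1 z^{1-2\alpha}\,dz
+\int_0^1\frac{1-\cos(\xi\delta z)-\frac{(\xi\delta z)^2}{2}}{z^{1+2\alpha}}\,dz.
\]
The first term equals $\frac{(\xi\delta)^2}{4(1-\alpha)}$. For the second term, using the elementary bound $\bigl|1-\cos t-\tfrac{t^2}{2}\bigr|\le \tfrac{t^4}{24}$ (or simply $\le C t^4$), the integrand is bounded in absolute value by $C(\xi\delta)^4 z^{3-2\alpha}$, which is integrable on $[0,1]$ uniformly for, say, $\alpha\in[\tfrac12,1)$, and hence the second term stays bounded as $\alpha\to1^-$. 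Multiplying through by $(1-\alpha)$ kills the bounded remainder and also sends $\delta^{-2\alpha}\to\delta^{-2}$, so that
\[
\lim_{\alpha\to1^-}(1-\alpha)\,\omega^2(\xi)
=\lim_{\alpha\to1^-}\frac{4\kappa}{\rho\,\delta^{2\alpha}}\cdot\frac{(\xi\delta)^2(1-\alpha)}{4(1-\alpha)}
=\frac{\kappa\,\xi^2}{\rho}.
\]
Taking square roots gives $\lim_{\alpha\to1^-}\sqrt{1-\alpha}\,\omega(\xi)=\frac{\sqrt\kappa\,|\xi|}{\sqrt\rho}$, which is the claimed statement with $C=1$ (the constant $C$ in the statement absorbs the normalization already fixed by~\eqref{eq:disp_rel}; one checks it is consistent with the constant $C_\star$ of Lemma~\ref{BAL:1}).

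There is no serious obstacle here: the only mild care needed is to justify interchanging the limit with the integral for the ``regular'' part, which is handled by the uniform domination $\bigl|1-\cos(\xi\delta z)-\tfrac{(\xi\delta z)^2}{2}\bigr| z^{-1-2\alpha}\le C(\xi) z^{3-2\alpha}\le C(\xi) z$ on $[0,1]$ for $\alpha\in[\tfrac12,1)$, so that the Dominated Convergence Theorem applies (and in fact one does not even need the limit of that part, only its boundedness). The rest is the bookkeeping of constants, and the fact that $\omega$ is real and nonnegative so that passing to the square root at the end is legitimate.
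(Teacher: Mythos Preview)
Your proof is correct. The approach differs from the paper's: rather than Taylor-expanding~$1-\cos(\xi\delta z)$ and isolating the divergent term~$\frac{(\xi\delta)^2}{4(1-\alpha)}$ directly on~$[0,1]$, the paper substitutes~$\tau=\xi\delta z$, rewrites the integral as~$|\xi|^{2\alpha}\int_{-\xi\delta}^{\xi\delta}\frac{1-\cos\tau}{|\tau|^{1+2\alpha}}\,d\tau$, extends the domain to all of~$\R$, and invokes the known limit~$\lim_{\alpha\to1^-}(1-\alpha)\int_\R\frac{1-\cos\tau}{|\tau|^{1+2\alpha}}\,d\tau=C_\star$ from~\cite[equation~(4.3)]{DNPV}, then checks that the tail~$\int_{\xi\delta}^{+\infty}$ is bounded so that~$(1-\alpha)$ times it vanishes.

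Your argument is more elementary and fully self-contained, and it has the bonus of identifying the constant explicitly as~$C=1$ (equivalently~$C_\star=\tfrac12$), whereas the paper leaves~$C$ implicit through the external reference. The paper's route, on the other hand, ties the result to the standard normalization constant of the fractional Laplacian, making the connection with Lemma~\ref{BAL:1} structurally transparent. Both are short and clean; yours avoids the dependence on~\cite{DNPV}.
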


\begin{proof} In view of the parity of~$\omega$,
we can suppose that~$\xi>0$.
Also, by~\cite[equation~(4.3)]{DNPV}, we know that
\begin{equation*}
\lim_{\alpha\to1^-} (1-\alpha)\int_\R \frac{1-\cos\tau}{|\tau|^{1+2\alpha}}\,d\tau=C_\star,
\end{equation*}
for some~$C_\star>0$.

Thus, from~\eqref{eq:disp_rel}
\begin{equation}\label{KM9304A}
\begin{split}
\lim_{\alpha\to1^-}(1-\alpha) 
\omega^2(\xi)\,&=
\lim_{\alpha\to1^-}
\frac{2(1-\alpha)\kappa}{\rho\,\delta^{2\alpha}}\,\int_{-1}^{1}\frac{1-\cos (\xi\delta z)}{|z|^{1+2\alpha}}dz
\\ &=\lim_{\alpha\to1^-}
\frac{2(1-\alpha)\kappa |\xi|^{2\alpha}}{\rho}\,\int_{-\xi\delta}^{\xi\delta}\frac{1-\cos (\tau)}{|\tau|^{1+2\alpha}}d\tau
\\&=
\frac{2\kappa C_\star \,|\xi|^{2}}{\rho}+
\lim_{\alpha\to1^-} 
\frac{4(1-\alpha)\kappa |\xi|^{2\alpha}}{\rho}\left(
\int_{\xi\delta}^{+\infty}
\frac{1-\cos (\tau)}{\tau^{1+2\alpha}}d\tau
\right).\end{split}\end{equation}
Additionally,
\begin{eqnarray*}
\int_{\xi\delta}^{+\infty}
\frac{1-\cos (\tau)}{\tau^{1+2\alpha}}d\tau\le
\int_{\xi\delta}^{+\infty}
\frac{2}{\tau^{1+2\alpha}}d\tau
=\frac{(\xi\delta)^{2\alpha}}{\alpha}
\end{eqnarray*}
This and~\eqref{KM9304A} entail that
$$ \lim_{\alpha\to1^-}(1-\alpha) 
\omega^2(\xi)=
\frac{2\kappa C_\star \,|\xi|^{2}}{\rho},$$
from which the desired result follows.
\end{proof}
\
\section{An elementary proof of~\eqref{EULEF}}\label{EULE}
We use an ad-hoc and rather delicate
modification of a classical
argument used in complex analysis (see e.g.~\cite[page 44]{MR1976398}).
For this we use a contour integration as in Figure~\ref{COMPA},
with~$\gamma=\gamma_1\cup\gamma_2\cup\gamma_3\cup\gamma_4$,
oriented counterclockwise.

\begin{figure}[ht!]
\centering
\includegraphics[scale=0.20]{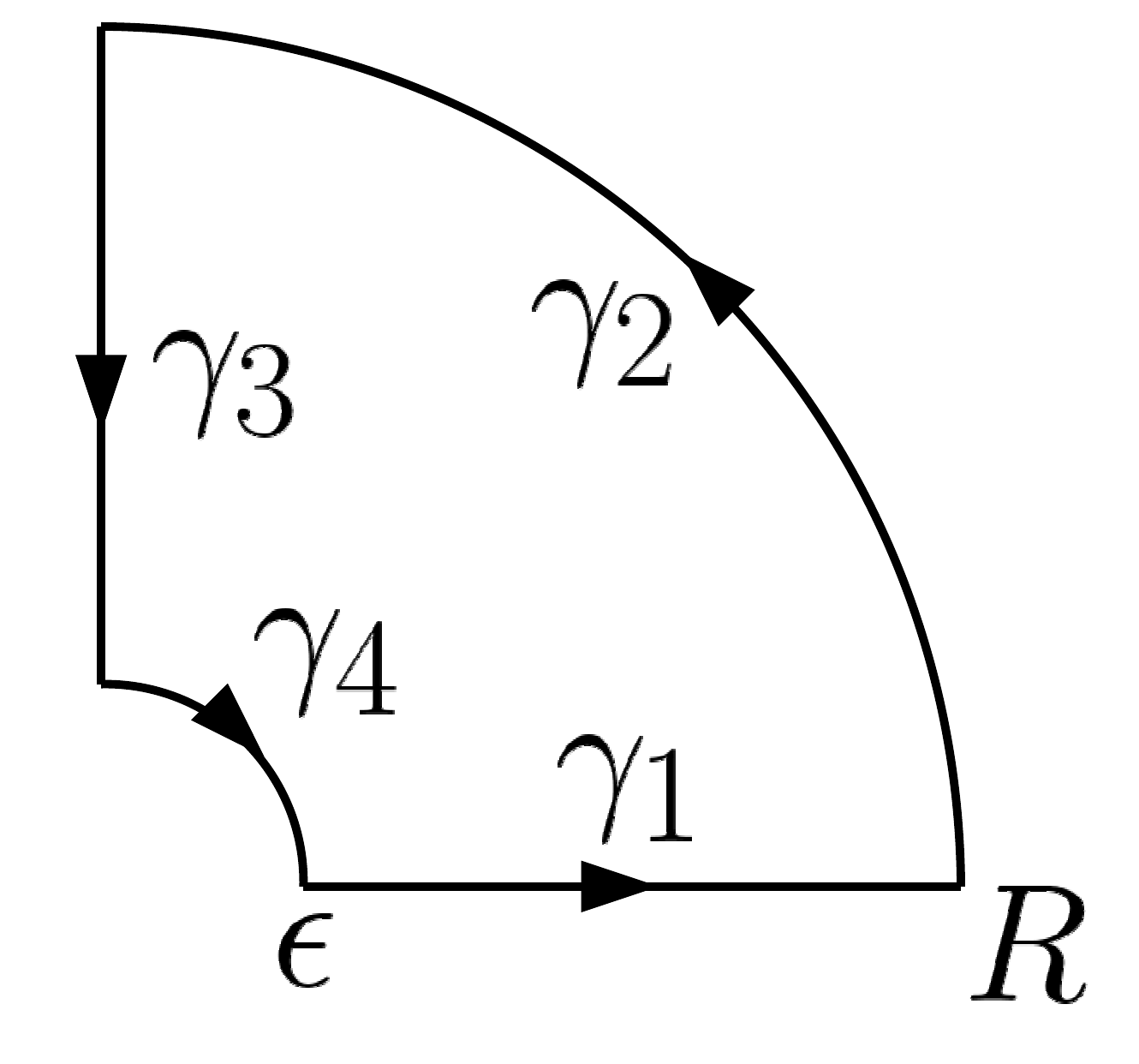}
\caption{\sl \footnotesize 
A closed curve for a complex analysis argument.}\label{COMPA}
\end{figure}

We observe that, by Cauchy's Theorem,
\begin{equation*}
\int_\gamma \frac{e^{iz}-1}{z^{1+2\alpha}}\,dz=0
\end{equation*}
and therefore
\begin{equation}\label{Fba0VA}
\lim_{{\epsilon\to0^+}\atop{R\to+\infty}}\Re\left(
\int_\gamma \frac{e^{iz}-1}{z^{1+2\alpha}}\,dz\right)=0.
\end{equation}
Thus, since
$$ \lim_{{\epsilon\to0^+}\atop{R\to+\infty}}
\Re\left(\int_{\gamma_1} \frac{e^{iz}-1}{z^{1+2\alpha}}\,dz
\right)=\lim_{{\epsilon\to0^+}\atop{R\to+\infty}}
\Re\left(\int_{\epsilon}^R \frac{e^{it}-1}{t^{1+2\alpha}}\,dt
\right)=
\lim_{{\epsilon\to0^+}\atop{R\to+\infty}}
\int_{\epsilon}^R \frac{\cos t-1}{t^{1+2\alpha}}\,dt
=\int_{0}^{+\infty} \frac{\cos t-1}{t^{1+2\alpha}}\,dt,
$$
we deduce from~\eqref{Fba0VA} that
\begin{equation}\label{Fba0VA2}
\int_{0}^{+\infty} \frac{1-\cos t }{t^{1+2\alpha}}\,dt=
\lim_{{\epsilon\to0^+}\atop{R\to+\infty}}\Re\left(
\int_{\gamma_2\cup\gamma_3\cup\gamma_4} \frac{e^{iz}-1}{z^{1+2\alpha}}\,dz\right).
\end{equation}
We also point out that if~$z=x+iy$ with~$x\in\R$ and~$y\ge0$ then
$$ |e^{iz}|=|e^{-y}|\le1.$$
Hence, since~$\gamma_2$
is the quarter of circle (traveled anticlockwise)
of the form~$\{z=R e^{it},\;t\in(0,\pi/2)\}$,
we have that if~$z\in\gamma_2$ then~$\left|\frac{e^{iz}-1}{z^{1+2\alpha}}\right|=
\frac{|e^{iz}-1|}{|z|^{1+2\alpha}}\le\frac2{R^{1+2\alpha}}$.
Accordingly,
\begin{equation}\label{Fba0VA3}
\left|
\lim_{{\epsilon\to0^+}\atop{R\to+\infty}}
\Re\left( \int_{\gamma_2} \frac{e^{iz}-1}{z^{1+2\alpha}}\,dz\right)\right|\le
\lim_{{R\to+\infty}}\frac{\pi R}{ R^{1+2\alpha}}=0.
\end{equation}
Now we note that~$\gamma_4$ is the quarter of circle (traveled clockwise)
of the form~$\{z=\epsilon e^{it},\;t\in(0,\pi/2)\}$.
Also, if~$z\in\gamma_4$, for small~$\epsilon$ we have that
$$ \frac{e^{iz}-1}{z^{1+2\alpha}}=
\frac{i}{z^{2\alpha}}+O(\epsilon^{1-2\alpha})
$$
and therefore
\begin{equation}\label{Fba0VA4} \begin{split}&
\Re\left( \int_{\gamma_4} \frac{e^{iz}-1}{z^{1+2\alpha}}\,dz\right)
=\Re\left( \int_{\gamma_4} \frac{i}{z^{2\alpha}}\,dz\right)+O(\epsilon^{2-2\alpha})
=
\epsilon^{1-2\alpha}\Re\left( \int_0^{\pi/2} \frac{e^{it}}{e^{2i\alpha t}}\,dt\right)+O(\epsilon^{2-2\alpha})
\\&\qquad=
\epsilon^{1-2\alpha}
\int_0^{\pi/2} \cos((2\alpha-1) t)\,dt+O(\epsilon^{2-2\alpha})=
\frac{\epsilon^{1-2\alpha}}{2\alpha-1}
\sin\frac{(2\alpha-1)\pi}2+O(\epsilon^{2-2\alpha})\\&\qquad=-
\frac{\epsilon^{1-2\alpha}}{2\alpha-1}
\cos(\pi\alpha)+O(\epsilon^{2-2\alpha})
.\end{split}\end{equation}
As for the integral on~$\gamma_3=\{z=it,\;t\in(\epsilon,R)\}$ (oriented
downwards), using that
\begin{equation}\label{RA7} i^{2\alpha}=(
e^{\frac{i\pi}2})^{2\alpha}=e^{ {i\pi}\alpha}\end{equation}
we have
\begin{equation*}
\begin{split}&\Re\left(
\int_{\gamma_3}\frac{e^{iz}-1}{z^{1+2\alpha}}\,dz\right)=
-\Re\left(i\int_{\epsilon}^R\frac{e^{-t}-1}{i^{1+2\alpha} t^{1+2\alpha}}
\,dt\right)=
-\Re\left(
e^{-i\pi\alpha}\int_{\epsilon}^R\frac{e^{-t}-1}{ t^{1+2\alpha}}\,dt\right)
\\&\qquad=
\frac1{2\alpha}\cos(\pi\alpha)
\int_{\epsilon}^R\left[
\frac{d}{dt}\left(
(e^{-t}-1)t^{-2\alpha}\right)
+ e^{-t} t^{-2\alpha}
\right]
\,dt\\&\qquad=
\frac1{2\alpha}\cos(\pi\alpha)\left[
(e^{-R}-1)R^{-2\alpha}-
(e^{-\epsilon}-1)\epsilon^{-2\alpha}+
\int_{\epsilon}^R
e^{-t} t^{-2\alpha}
\,dt\right]
\\&\qquad=
\frac1{2\alpha}\cos(\pi\alpha)\left[
\epsilon^{1-2\alpha}+
\int_{\epsilon}^R
e^{-t} t^{-2\alpha}
\,dt\right]+O(R^{-2\alpha})+O(\epsilon^{2-2\alpha})\\
\\&\qquad=
\frac1{2\alpha}\cos(\pi\alpha)\left[
\epsilon^{1-2\alpha}+\frac{1}{1-2\alpha}
\int_{\epsilon}^R\left(\frac{d}{dt}\big(e^{-t} t^{1-2\alpha}\big)+e^{-t} t^{1-2\alpha}\right)
\,dt\right]+O(R^{-2\alpha})+O(\epsilon^{2-2\alpha})\\&\qquad=
\frac1{2\alpha}\cos(\pi\alpha)\left[
\epsilon^{1-2\alpha}+\frac{1}{1-2\alpha}\left(
e^{-R} R^{1-2\alpha}
-
e^{-\epsilon}\epsilon^{1-2\alpha}+\int_{\epsilon}^Re^{-t} t^{1-2\alpha}
\,dt\right)\right]+O(R^{-2\alpha})+O(\epsilon^{2-2\alpha})\\&\qquad=
\frac1{2\alpha}\cos(\pi\alpha)\left[
\epsilon^{1-2\alpha}+\frac{1}{1-2\alpha}\left(
-\epsilon^{1-2\alpha}+\int_{\epsilon}^Re^{-t} t^{1-2\alpha}
\,dt\right)\right]+O(R^{-2\alpha})+O(\epsilon^{2-2\alpha})
\\&\qquad=
\frac1{2\alpha(1-2\alpha)}\cos(\pi\alpha)\left[
-2\alpha\epsilon^{1-2\alpha}+\int_{\epsilon}^Re^{-t} t^{1-2\alpha}
\,dt \right]+O(R^{-2\alpha})+O(\epsilon^{2-2\alpha})
.
\end{split}
\end{equation*}
As a result, recalling~\eqref{Fba0VA4},
\begin{eqnarray*}&&
\lim_{{\epsilon\to0^+}\atop{R\to+\infty}}
\Re\left( \int_{\gamma_3\cup\gamma_4}
\frac{e^{iz}-1}{z^{1+2\alpha}}\,dz\right)=
\lim_{{\epsilon\to0^+}\atop{R\to+\infty}}\left[
\frac{\cos(\pi\alpha)}{2\alpha(1-2\alpha)}
\int_{\epsilon}^Re^{-t} t^{1-2\alpha}
\,dt +O(R^{-2\alpha})+O(\epsilon^{2-2\alpha})\right]\\&&\qquad=
\frac{\cos(\pi\alpha)\,\Gamma(2-2\alpha)}{2\alpha(1-2\alpha)}
=- \cos(\pi\alpha)\,\Gamma(-2\alpha).
\end{eqnarray*}
By inserting this information and \eqref{Fba0VA3} 
into~\eqref{Fba0VA2}, we thereby obtain the desired result in~\eqref{EULEF}.

\section{A shorter (but less elementary) proof of~\eqref{EULEF}}\label{EULE2}

{F}rom Ramanujan's Master Theorem (see e.g.
Formula~(B) in Section~11.2 on page~186 of~\cite{MR0004860} or
Theorem~3.2 in~\cite{MR2994092}),
if a complex-valued function~$f$
has an expansion of the form
$$ f(x)=\sum_{k=0}^{+\infty }{\frac {\,\ell(k)\,}{k!}}(-x)^{k},$$
then
\begin{equation}\label{RAMA}
\int _{0}^{+\infty }x^{s-1}f(x)dx=\Gamma (s)\,\ell (-s).\end{equation}
We take~$s:=1-2\alpha$
and~$\ell(s):= \sin\left( \frac{\pi s}2\right)$.
In this way,
$$ \ell(k)=\begin{cases}
0 & {\mbox{ if $k\in2\N$,}}\cr
1& {\mbox{ if $k\in4\N+1$,}}\cr
-1 & {\mbox{ if $k\in4\N+3$,}}
\end{cases}$$
thus
$$ f(x)=-
\sum_{j=0}^{+\infty }{\frac {1}{(4j+1)!}}x^{4j+1}+
\sum_{j=0}^{+\infty }{\frac {1}{(4j+3)!}}x^{4j+3}
=-\sum_{m=0}^{+\infty }{\frac {(-1)^m}{(2m+1)!}}x^{2m+1}=
-\sin x.$$
Then, we deduce from~\eqref{RAMA} that
\begin{equation*}-
\int _{0}^{+\infty } x^{-2\alpha}
\sin x\,
dx=\Gamma (1-2\alpha)\,\sin\left(\frac\pi2(1-2\alpha)\right)=
-2\alpha
\Gamma (-2\alpha)\,\cos(\pi\alpha).
\end{equation*}
This and~\eqref{RIEMANN} entail~\eqref{EULEF}.

\end{appendix}
\bibliographystyle{abbrv}
\bibliography{CFP-ref-fin-rev}

\end{document}